\documentclass{article}

\usepackage{color}
\setlength{\textwidth}{14cm} \setlength{\oddsidemargin}{1cm}
\setlength{\evensidemargin}{0cm} \setlength{\topmargin}{0cm}
\setlength{\textheight}{22cm} \catcode`@=11
\usepackage{color}
\usepackage{amsthm,amssymb,amsfonts}
\usepackage{amsmath}
\catcode`@=11 \@addtoreset{equation}{section}

\catcode`@=12

\newtheorem{theorem}{Theorem}[section]
\newtheorem{proposition}{Proposition}[section]
\newtheorem{lemma}{Lemma}[section]

\theoremstyle{definition}

\newtheorem{definition}{Definition}[section]
\newtheorem{remark}{Remark}

\newtheorem{hypothesis}{Hypothesis}[section]
\def\R{{\mathbb{R}}}

\newcommand{\ds}{\displaystyle}
\def\supp{\mathop{\rm supp}\nolimits}
\newcommand{\norm}[1]{\left\Vert#1\right\Vert}
\usepackage[colorlinks=true,urlcolor=red,linkcolor=blue,citecolor=red,bookmarks=red]{hyperref}

\title{Null controllability for degenerate parabolic equations with a nonlocal space term}
\author{{\sc Brahim Allal}
\\
Facult\'e des Sciences et Techniques\\
Universit\'e Hassan 1er\\
Laboratoire MISI, B.P. 577\\
Settat 26000, Morocco
\\ email: b.allal@uhp.ac.ma
\\
{\sc Genni Fragnelli}
\thanks{The author is a member of the Gruppo Nazionale per l'Analisi Ma\-te\-matica, la Probabilit\`a e le loro Applicazioni (GNAMPA) of the Istituto Nazionale di Alta Matematica (INdAM) and she is supported by the FFABR {\it Fondo per il finanziamento delle attivit\`a base di ricerca} 2017, by the INdAM - GNAMPA Project 2020 {\it Problemi inversi e di controllo per equazioni di evoluzione e loro applicazioni},  by  Fondi di Ateneo 2017/18 of the University of  Bari {\em Problemi differenziali non linearii} and by PRIN 2017-2019 {\it Qualitative and quantitative aspects of nonlinear PDEs}.}
\\
Dipartimento di Matematica\\ Universit\`{a} di Bari Aldo Moro\\
Via
E. Orabona 4\\ 70125 Bari - Italy\\ email: genni.fragnelli@uniba.it\\
{\sc  Jawad Salhi}\\
Moulay Ismail University of Meknes,\\
FST Errachidia, MAIS Laboratory, MAMCS Group,\\
P.O. Box 509, Boutalamine 52000, Errachidia, Morocco\\
email: sj.salhi@gmail.com}

\date{}
\begin{document}

\maketitle
\begin{abstract}
We consider two degenerate heat equations with a nonlocal space term, studying, in particular, their null controllability property.
To this aim, we first consider the associated nonhomogeneous degenerate heat equations: we study their well posedness, the Carleman estimates for the associated adjoint problems and, finally, the null controllability. Then, as a consequence, using the Kakutani's fixed point Theorem,  we deduce the null controllability property for the initial nonlocal problems.
\end{abstract}
\noindent {\bf{Keywords:}} Controllability, degenerate equationsl, nonlocal term, Carleman estimates.
\smallskip

\noindent{\bf{2020 Mathematics Subject Classification:}} 93B05, 35K05, 35K67, 35R09.

\section{Introduction}
\label{intro}

In this paper, we address the null controllability for the following degenerate integro-differential equations in  non divergence and divergence form:
\begin{equation}\label{problem_nondiv}
\begin{cases}
\ds y_t - ay_{xx} + \int_{0}^{1} K(t,x,\tau)y(t,\tau)\,d\tau  = 1_{\omega} u, & (t,x) \in Q,\\
y (t, 0) = y (t, 1) =0,& t \in (0,T),
\\
y(0,x)=y_0(x), & x \in (0,1)
\end{cases}
\end{equation}
and
\begin{equation}\label{problem_div}
\left\{
  \begin{array}{ll}
y_t - (ay_{x})_x + \int_{0}^{1} K(t,x,\tau)y(t,\tau)\,d\tau =  1_{\omega} u, & (t,x) \in Q,\\
y(t, 1) = 0, & t \in (0,T),\\
\left\{
\begin{array}{ll}
y (t, 0) = 0,\quad \text{in the weakly degenerate case (WD)},\\
(a y_{x})(t, 0)= 0, \quad \text{in the strongly degenerate case (SD)},
\end{array}
\right.
& t \in (0,T),
\\
y(0,x)=y_0(x), & x \in (0,1),
  \end{array}
\right.
\end{equation}
\noindent where $T>0$ is fixed, $Q:=(0,T) \times (0,1)$ and $1_{\omega}$ denotes the characteristic function of a nonempty open subset $\omega:=(\bar{\alpha},\bar{\beta})$ compactly contained in $(0,1)$. Here $y$ and $u$ are the state variable and the control force respectively, $K$ is a given $L^{\infty}$ function defined on $Q\times(0,1)$ and $a$ is a diffusion coefficient which degenerates at the extremity $x=0$ and satisfies the following hypotheses: \\

\noindent In the case of nondivergence form:
\begin{hypothesis}\label{Hypoth_1_nondiv}
The function $a\in C[0,1]$ is such that $a(0)=0, \, a>0 \; \text{in}\; (0, 1]$ and there exists $\alpha \in (0,2),$ such that the function $ x \mapsto \frac{x^{\alpha}}{a(x)}$ is nondecreasing.
\end{hypothesis}
\noindent In the case of divergence form:
\begin{hypothesis}\label{Hypoth_WD_div} {\bf Weakly degenerate (WD)}
The function $ a \in C([0,1]) \cap C^1((0,1])$ is such that  $a(0)=0$, $a>0$ in $(0, 1]$  and $\frac{1}{a} \in L^1(0,1)$.
\end{hypothesis}
\begin{hypothesis}\label{Hypoth_SD_div} {\bf Strongly degenerate (SD)}
The function $ a \in C^1([0,1])$ is such that  $a(0)=0$, $a>0$ in $(0, 1]$  and $\frac{1}{\sqrt a} \in L^1(0,1)$.
\end{hypothesis}

\begin{remark}\label{remark_WSD}
Thanks to Hypothesis \ref{Hypoth_1_nondiv} we see that  $ x \mapsto \frac{x^{\alpha}}{a(x)}$ is nondecreasing on $(0, 1]$ and thus
$$ \frac{1}{a(x)} \leq  \frac{1}{x^{\alpha}a(1)}.$$
This implies that $\frac{1}{a} \in L^1(0,1)$ if $\alpha \in (0,1)$ and $\frac{1}{\sqrt a} \in L^1(0,1)$ if $\alpha \in [1,2)$.
\end{remark}

We emphasize the fact that, in this work, problems \eqref{problem_nondiv} and \eqref{problem_div} will be treated separately, since the controllability property of the first one cannot be deduced from the one in the divergence form. Indeed, for instance, in the absence of the nonlocal term, the equation
$$ \displaystyle y_t - ay_{xx} = 0 $$
can be rewritten as
\begin{equation}\label{equation_div_firstordr}
\displaystyle y_t - (ay_x)_x + a_xy_{x} = 0
\end{equation}
only if $a_x$ exists.
Moreover, as described in
\cite{bfr},
degenerate equations of the form \eqref{equation_div_firstordr},
are well-posed in $L^2(0,1)$ under the structural assumption
\begin{equation} \label{pri}
|a_x(x)|\le C\sqrt{a(x)}
\end{equation}
where $C$  is a positive constant.
Now, imposing \eqref{pri} on $a_x$, for $a(x)=x^\alpha$, we obtain $\alpha\ge 2$; nevertheless, in \cite{CFR2008}, it is proved that \eqref{equation_div_firstordr} is not null controllable if $\alpha\geq 2.$

Null controllability for nonlocal parabolic problems of type  \eqref{problem_nondiv} or \eqref{problem_div} has recently attracted the attention of many mathematicians, since these problems  describe a variety of physical phenomena. For instance, when $a=1$, such system appears in population dynamics, where $y(t,x)$ represents the density of the species at position $x$ and time $t$, while the reaction term $\int_{0}^{1} K(\cdot,\cdot,\tau)y(\cdot,\tau)\,d\tau$ is considered as the rate of reproduction. This integral term is a way to express that the evolution of the species in a point of space depends on the total amount of the species (see for instance \cite{Calsina, Furter, Zheng}).

In \cite{CLZ2016}, in the context of uniformly parabolic equations and using an  approach  based on a compactness-uniqueness argument, the authors have established the controllability for \eqref{problem_nondiv} assuming that the kernel $K$ is a time-independent and analytic function; in the same paper  a similar result is also shown for the wave equation. The result of \cite{CLZ2016} is extended later in \cite{Lissy2018} to a general coupled parabolic system and in \cite{Micu} to a  $1-d$ scalar equation, by assuming a particular class of time-independent kernels in separated variables. More precisely, in \cite{Micu}, the kernel $K$ is such that $ K(t,x,y)=K_1(x)K_2(y),$ with $K_1$ not vanishing in the region where the control acts. Recently, in \cite{Biccari2019} U. Biccari and V. H. Santamar\'{\i}a have extended the last result considering a problem in any space dimension and relaxing the assumptions on the kernel. In particular, in \cite{Biccari2019} it is shown that the considered system is null controllable provided the function $K$ enjoys only an exponential decay at the final time $t = T$.

Finally, we would like to mention \cite{Lorenzi11},  where unique continuation and inverse problem of an integro-differential equation are analyzed via Carleman estimates. In particular, in \cite{Lorenzi11} it is considered an integral term involving the solution and its first order derivatives.

The main goal of this paper is to provide a suitable condition on the kernel $K$ so that the degenerate problem \eqref{problem_nondiv} (resp. \eqref{problem_div}) is null controllable, that is to say, for any initial data $y_0$, there exists a control function $u$ such that the associated solution to \eqref{problem_nondiv} (resp. \eqref{problem_div}) vanishes at a given time $t=T$.

\begin{remark}
It is well known that, the null controllability of system \eqref{problem_nondiv} is equivalent to the proof of the observability inequality
\begin{equation}\label{observ_nondiv}
\|v(0)\|_{L^2(0,1)}^2 \leq C \int_0^T\!\!\! \int_{\omega} v^2 \,dx\,dt,\qquad \forall v_T \in L^2(0,1)
\end{equation}
for the solutions of the adjoint system
\begin{equation}\label{problem_adj_nondiv}
\left\{
\begin{array}{ll}
\displaystyle - v_t - av_{xx} + \int_{0}^{1} K(t,\tau,x)v(t,\tau)\,d\tau  = 0, & (t,x) \in Q,\\
v (t, 0) = v (t, 1) =0,& t \in (0,T),
\\
v(T,x)=v_T(x),&  x\in (0,1).
\end{array}
\right.
\end{equation}
Recall that the classical way to establish an estimate of this kind is to derive a global Carleman inequality of the form
\begin{align}\label{Carleman_remark1}
\int\!\!\!\!\!\int_{Q} \Big(s \theta   v_{x}^{2} + s^{3} \theta^3 \left(\frac{x}{a}\right)^2 v^{2}\Big) e^{2s\phi}\,dx\,dt
\leq C \int_0^T\!\!\! \int_{\omega} v^2 \,dx dt,
\end{align}
for all $s$ large enough. Here $\theta$ and $\phi$ are as in \eqref{phi}-\eqref{weightfunc_deg_nodiv}. In fact, once the previous estimate holds, it suffices to apply the Hardy-Poincar\'e inequality \eqref{hardyineq_nondiv} and to use a standard calculation (which is based on integration by parts) to obtain \eqref{observ_nondiv}. Nevertheless, it is not clear whether an estimate of this type can be achieved in the context of nonlocal problems.

Indeed, an application of the Carleman estimate obtained in Theorem \ref{thm_Carl_local} to \eqref{problem_adj_nondiv} yields
\begin{align}\label{Carleman_remark2}
&\int\!\!\!\!\!\int_{Q} \Big(s \theta   v_{x}^{2} + s^{3} \theta^3 \left(\frac{x}{a}\right)^2 v^{2}\Big) e^{2s\phi}\,dx\,dt \notag \\
&\leq C \int\!\!\!\!\!\int_{Q} \frac{e^{2s\Phi}}{a}\Big(\int_{0}^{1} K(t,\tau,x)v(t,\tau)\,d\tau\Big)^2 \,dx\,dt  + C\int_0^T\!\!\! \int_{\omega} v^2 \,dx dt,
\end{align}
where $\Phi(t,x):=\theta(t)\Psi(x)$ is an appropriate weight function satisfying
$\phi\leq\Phi<0$ (see \eqref{weightfunc_nondeg}). Then, in order to deduce \eqref{Carleman_remark1}, it suffices to show that the first term in the right hand side of \eqref{Carleman_remark2} can be controlled by
the left hand side; but this appears to be impossible due to the nonlocal nature of this term. Hence, the usual technique do not seem to work directly for the controllability problems of integro-differential parabolic equations like \eqref{problem_nondiv}.

The same can be said about system \eqref{problem_div}.
\end{remark}

To overcome this difficulty and establish the desired controllability results for nonlocal problems \eqref{problem_nondiv} and  \eqref{problem_div}, we use the following arguments:
\begin{itemize}
\item Step 1: We establish the null controllability property for appropriate nonhomogeneous systems without nonlocal term, via new Carleman estimates with weight time functions that do no blow up at $t=0$.
\item Step 2: As consequence of the result in the previous step, we arrive at the controllability for the nonlocal problems by means of a fixed point argument.
\end{itemize}

The rest of the paper is organized as follows. In the next section, using the Faedo-Galerkin method, the global existence and uniqueness
of a weak solution to problem \eqref{problem_nondiv} is proved. Section \eqref{section_null_nonhomo} is devoted to the analysis of the
null controllability property for the two associated nonhomogeneous degenerate problems, without nonlocal term, in a suitable functional setting.
Section \eqref{section_null_nonlocal_nondiv} is concerned with the null controllability for the initial nonlocal problem \eqref{problem_nondiv} and in Section \eqref{section_null_nonlocal_div} we give a sketch of the proof for the null controllability of \eqref{problem_div}. Finally, in Section \eqref{Appendix}, we give the proof of some technical results.

A final comment on the notation: by $C$ we shall denote universal positive constants,
which are allowed to vary from line to line.

\section{Well-posedness}
In this section, we prove the well-posedness (existence and uniqueness) of the weak solution to problems \eqref{problem_nondiv} and \eqref{problem_div}.
To prove well posedness of \eqref{problem_nondiv}, as in \cite{CFR2008}, we assume that
\[
a\in C[0,1] \text{ is such that } a(0)=0, \, a>0 \; \text{in}\; (0, 1].
\]
and we introduce the following weighted Hilbert spaces
$$ L_{\frac{1}{a}}^2(0,1):= \Big\{ u \in L^2(0,1)\, | \quad \frac{u}{\sqrt{a}} \in L^2(0,1) \Big\}, $$
$$H_{\frac{1}{a}}^1(0,1) := L_{\frac{1}{a}}^2(0,1) \cap H_0^1(0,1)  $$
and
$$ H_{\frac{1}{a}}^2(0,1) :=  \Big\{ u \in   H_{\frac{1}{a}}^1(0,1) \, | \quad  au_{xx} \in   L_{\frac{1}{a}}^2(0,1) \Big\}$$
endowed with the following norms
\begin{align*}
&\| u \|_{L_{\frac{1}{a}}^2(0,1)}^2 : = \int_0^1 \frac{u^2}{a} \, dx, \quad \forall \;u \in L_{\frac{1}{a}}^2(0,1),\\
&\| u \|_{H_{\frac{1}{a}}^1(0,1)}^2:=  \| u \|_{ L_{\frac{1}{a}}^2(0,1)}^2 +  \| u_x \|_{L^2(0,1)}^2, \quad \forall\; u \in   H_{\frac{1}{a}}^1(0,1) , \\
&\| u \|_{H_{\frac{1}{a}}^2(0,1)}^2 :=  \| u \|_{H_{\frac{1}{a}}^1(0,1)}^2 +  \| a u_{xx} \|_{L^2_{\frac{1}{a}}(0,1)}^2, \quad \forall\; u \in  H_{\frac{1}{a}}^2(0,1).
\end{align*}
Let $H_{\frac{1}{a}}^{-1}(0,1)$ be the dual space of $H_{\frac{1}{a}}^1(0,1)$ with respect to the pivot space
$L_{\frac{1}{a}}^2(0,1)$, endowed with the natural norm
\begin{equation*}
\| z \|_{H_{\frac{1}{a}}^{-1}}:=  \sup_{\| y \|_{H_{\frac{1}{a}}^{1}}=1} \langle z, y \rangle_{H_{\frac{1}{a}}^{-1}, H_{\frac{1}{a}}^{1}}.
\end{equation*}
While in the divergence case, as in \cite{Alabau2006}, we consider the following weighted spaces.
In the (WD) case:
\begin{align*}
H_a^1(0,1):= \Big\{ y \in L^2(0,1): y\; &\text{a.c. in}\, [0,1],\; \sqrt{a}y_x \in L^2(0,1)\;\text{and}\; y(1)=y(0)=0 \Big\}
\end{align*}
and
\begin{align*}
H_a^2(0,1):= \Big\{ y \in H_a^1(0, 1): ay_x \in H^1(0,1)\Big\}.
\end{align*}
In the (SD) case:
\begin{align*}
H_a^1(0,1):= \Big\{ y \in L^2(0,1): y\, &\text{locally a.c. in}\, (0,1],\quad\sqrt{a}y_x \in L^2(0,1)\,\text{and}\, y(1)=0 \Big\}
\end{align*}
and
\begin{align*}
H_a^2(0,1):&= \Big\{ y \in H_a^1(0, 1): ay_x \in H^1(0,1)\Big\}\\
&=\Big\{ y \in L^2(0,1): y\, \text{locally a.c. in}\, (0,1], ay\in H^1_0(0,1),\\
&\qquad ay_x \in H^1(0,1) \,\text{and}\, (ay_x)(0)=0 \Big\}.
\end{align*}
In both cases, we consider the following norms
\begin{align*}
&\| y \|_{H_a^1(0,1)}^2:=  \| y \|_{L^2(0,1)}^2 +  \| \sqrt{a} y_x \|_{L^2(0,1)}^2\qquad\text{and}\\
&\| y \|_{H_a^2(0,1)}^2 :=  \| y \|_{H_a^1(0,1)}^2 +  \| (a y_x )_x \|_{L^2(0,1)}^2.
\end{align*}

Let us give the definition of weak solutions to problems \eqref{problem_nondiv} and \eqref{problem_div}.
\begin{definition}
Let $y_0\in L_{\frac{1}{a}}^2(0,1)$ and $u\in  L_{\frac{1}{a}}^2(Q):= L^2(0,T;L_{\frac{1}{a}}^2(0,1))$. A function $y$ is said to be a weak solution of problem \eqref{problem_nondiv}, if
\begin{equation*}
\begin{cases}
&y \in L^2(0, T; H_{\frac{1}{a}}^1(0,1)), \quad y_t \in L^2(0, T; H_{\frac{1}{a}}^{-1}(0,1)), \\
&\int\!\!\!\int_{Q} (\frac{y_t v}{a} + y_x v_x ) \, dt dx \notag\\
&= - \int\!\!\!\int_{Q} \left(\int_{0}^{1} K(t,x,\tau)y(t,\tau)\,d\tau \right) \frac{v}{a} \, dt dx + \int\!\!\!\int_{Q_{\omega}} \frac{u v}{a} \, dt dx,\quad \forall v \in L^2(0, T; H_{\frac{1}{a}}^1(0,1))\quad\text{and}\\
&y(0)=y_0.
\end{cases}
\end{equation*}
Here $Q_{\omega}:= (0,T) \times \omega$.
\end{definition}

\begin{definition}
Let $y_0\in L^2(0,1)$ and $u\in  L^2(Q)$. A function $y$ is said to be a weak solution of problem \eqref{problem_div}, if
\begin{equation*}
\begin{cases}
&y \in L^2(0, T; H_{a}^1(0,1)), \quad y_t \in L^2(0, T; H_{a}^{-1}(0,1)), \\
&\int\!\!\!\int_{Q} (y_t v + a y_x v_x ) \, dt dx \notag\\
&= - \int\!\!\!\int_{Q} \left(\int_{0}^{1} K(t,x,\tau)y(t,\tau)\,d\tau \right) v\, dt dx + \int\!\!\!\int_{Q_{\omega}} u v \, dt dx,
\quad \forall v \in L^2(0, T; H_{a}^1(0,1))\quad \text{and} \\
&y(0)=y_0.
\end{cases}
\end{equation*}
Here $H_{a}^{-1}(0,1)$ denotes the dual of $H_{a}^1(0,1)$.
\end{definition}

Now we are ready for the main results of this Section.
\begin{theorem}\label{Thm_wellposed_nonloc_nondiv}
Let $y_0 \in L_{\frac{1}{a}}^2(0,1)$, $u\in  L_{\frac{1}{a}}^2(Q)$ and assume that the kernel $K$ satisfies the following condition:
\begin{equation}\label{hyp_K_wellposed}
\int_{0}^{1}\int_{0}^{1}  \frac{K^2(t,x,\tau)}{a(x)}\,d\tau\,dx \in L^{\infty}(0,T).
\end{equation}
Then, system \eqref{problem_nondiv} admits a unique weak solution $y$ such that
$$
y \in C([0,T];  L^2_{\frac{1}{a}}(0,1))\cap L^2(0, T; H_{\frac{1}{a}}^1(0,1)), \quad y_t \in L^2(0, T; H_{\frac{1}{a}}^{-1}(0,1)).
$$
Furthermore, there is a positive constant $C$ such that
\[\begin{aligned}
& \sup\limits_{t\in[0,T]}\|y(t)\|_{L^2_{\frac{1}{a}}(0,1)}^2 + \int_{0}^{T} \|y(t)\|_{H_{\frac{1}{a}}^1(0,1)}^2\,dt \\
&+  \int_{0}^{T} \|y_{t}(t)\|_{H_{\frac{1}{a}}^{-1}(0,1)}^2\,dt\leq C \left( \|y_0\|_{L^2_{\frac{1}{a}}(0,1)}^2 + \|u\|_{L^2_{\frac{1}{a}}(Q_{\omega})}^2 \right).
\end{aligned}\]
\end{theorem}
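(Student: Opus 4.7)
The plan is to follow the Faedo--Galerkin scheme announced by the authors, using the weighted space $L^2_{\frac{1}{a}}(0,1)$ as the reference Hilbert structure, since the natural energy identity for the non-divergence operator $-ay_{xx}$ is obtained by testing with $y/a$.

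First, I would construct an orthonormal basis $\{e_k\}_{k\ge 1}$ of $L^2_{\frac{1}{a}}(0,1)$ consisting of eigenfunctions of the self-adjoint operator $Ay := -ay_{xx}$ with domain $H^2_{\frac{1}{a}}(0,1)\cap H^1_{\frac{1}{a}}(0,1)$; this operator is symmetric in $L^2_{\frac{1}{a}}$ (the weight exactly cancels the coefficient $a$) and has compact resolvent by the Hardy--Poincar\'e-type embedding $H^1_{\frac{1}{a}}\hookrightarrow L^2_{\frac{1}{a}}$. On each finite-dimensional subspace $V_n:=\mathrm{span}\{e_1,\dots,e_n\}$ I would project equation \eqref{problem_nondiv}; since the integral operator is linear and continuous in the unknown, the resulting system is a linear ODE in $\mathbb{R}^n$, hence globally solvable, producing an approximate solution $y_n$.

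The core of the argument is the uniform a priori estimate. Testing the approximate equation in the $L^2_{\frac{1}{a}}$ inner product with $y_n$ (i.e.\ multiplying by $y_n/a$ and integrating) one obtains
\[
\frac{1}{2}\frac{d}{dt}\|y_n\|^2_{L^2_{\frac{1}{a}}}+\|(y_n)_x\|^2_{L^2}=-\!\int_0^1\!\frac{y_n}{a}\Bigl(\int_0^1 K\,y_n\,d\tau\Bigr)dx+\!\int_\omega\!\frac{u\,y_n}{a}\,dx.
\]
The hypothesis \eqref{hyp_K_wellposed} is tailored precisely to absorb the nonlocal term: applying Cauchy--Schwarz first in $x$ (with weight $1/a$) and then in $\tau$, and using that $a\in C[0,1]$ is bounded, I would get
\[
\Bigl|\int_0^1\!\frac{y_n(x)}{a(x)}\!\int_0^1\! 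K(t,x,\tau)y_n(t,\tau)\,d\tau\,dx\Bigr|\le \|a\|_\infty^{1/2}\kappa(t)^{1/2}\|y_n\|^2_{L^2_{\frac{1}{a}}},
\]
with $\kappa(t):=\iint K^2(t,x,\tau)/a(x)\,d\tau\,dx\in L^\infty(0,T)$. Combined with Young's inequality on the control term, Gronwall's lemma yields uniform bounds for $y_n$ in $L^\infty(0,T;L^2_{\frac{1}{a}})\cap L^2(0,T;H^1_{\frac{1}{a}})$ in terms of $\|y_0\|^2_{L^2_{\frac{1}{a}}}+\|u\|^2_{L^2_{\frac{1}{a}}(Q_\omega)}$. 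A bound on $(y_n)_t$ in $L^2(0,T;H^{-1}_{\frac{1}{a}})$ follows by duality from the equation, using the same estimate on the nonlocal term applied to test functions in $H^1_{\frac{1}{a}}$.

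Passage to the limit is then standard: weak convergence in $L^2(0,T;H^1_{\frac{1}{a}})$, weak-$*$ convergence in $L^\infty(0,T;L^2_{\frac{1}{a}})$, and an Aubin--Lions compactness argument for strong convergence in $L^2(Q)$ allow identification of the limit as a weak solution; the nonlocal term passes to the limit by linearity and the $L^2$ integrability of $K$. Uniqueness and the quantitative estimate in the statement both follow at once from the same Gronwall argument applied to the difference of two solutions and to the solution itself, while the continuity $y\in C([0,T];L^2_{\frac{1}{a}})$ is obtained from the regularity $y\in L^2(0,T;H^1_{\frac{1}{a}})$, $y_t\in L^2(0,T;H^{-1}_{\frac{1}{a}})$ via the Lions--Magenes embedding. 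The main obstacle, and the reason hypothesis \eqref{hyp_K_wellposed} is formulated with the weight $1/a(x)$, is precisely the absorption of the nonlocal term into a Gronwall bound against $\|y\|^2_{L^2_{\frac{1}{a}}}$; the weighted choice of the state space is exactly what pairs the test function $y/a$ with the integrability of $K^2/a$ assumed on the kernel.
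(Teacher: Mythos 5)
Your proposal is correct and follows essentially the same route as the paper: a Faedo--Galerkin scheme in the weighted space $L^2_{\frac{1}{a}}(0,1)$, the identical factorization $\sqrt{a(\tau)}\,\frac{K(t,x,\tau)}{\sqrt{a(x)}}\,\frac{y(t,\tau)}{\sqrt{a(\tau)}}$ to absorb the nonlocal term via Cauchy--Schwarz and hypothesis \eqref{hyp_K_wellposed}, Gronwall for the uniform bounds, weak compactness to pass to the limit, and linearity plus the same a priori estimate for uniqueness. The only cosmetic differences (eigenfunction basis instead of a generic orthogonal basis of $H^1_{\frac{1}{a}}$ and $L^2_{\frac{1}{a}}$, and invoking Aubin--Lions where the paper gets by with weak convergence alone since the nonlocal operator is linear and bounded) do not change the substance of the argument.
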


\begin{theorem}\label{Thm_wellposed_nonloc_div}
Let $y_0 \in L^2(0,1)$ and $u\in  L^2(Q)$. Then, system \eqref{problem_div} admits a unique weak solution $y$ such that
$$
y \in C([0,T];  L^2(0,1))\cap L^2(0, T; H_{a}^1(0,1)), \quad y_t \in L^2(0, T; H_{a}^{-1}(0,1)).
$$
Furthermore, there is a positive constant $C$ such that
\[\begin{aligned}
& \sup\limits_{t\in[0,T]}\|y(t)\|_{L^2(0,1)}^2 + \int_{0}^{T} \|y(t)\|_{H_{a}^1(0,1)}^2\,dt \\
&+  \int_{0}^{T} \|y_{t}(t)\|_{H_{a}^{-1}(0,1)}^2\,dt\leq C \left( \|y_0\|_{L^2(0,1)}^2 + \|u\|_{L^2(Q_{\omega})}^2 \right).
\end{aligned}
\]
\end{theorem}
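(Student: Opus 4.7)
The plan is to adapt the Faedo--Galerkin argument sketched for Theorem \ref{Thm_wellposed_nonloc_nondiv} to the divergence case, taking advantage of the fact that the natural energy space is now $L^2(0,1)$ (unweighted), so no extra hypothesis on $K$ beyond $K\in L^\infty(Q\times(0,1))$ is required. As a preliminary step, I would verify that the bilinear form $\mathfrak{a}(u,v):=\int_0^1 a\, u_x v_x\,dx$ is continuous and, together with the $L^2$ mass term, coercive on $H_a^1(0,1)$ in both the (WD) and (SD) cases, the boundary condition at $x=0$ being encoded in the definition of $H_a^1(0,1)$ borrowed from \cite{Alabau2006}. The compact embedding $H_a^1(0,1)\hookrightarrow L^2(0,1)$ then furnishes a Hilbert basis $\{e_k\}_{k\ge 1}$ of $L^2(0,1)$ of eigenfunctions of the associated self-adjoint realization of $u\mapsto -(au_x)_x$.

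Next, I would project \eqref{problem_div} onto $V_n:=\mathrm{span}\{e_1,\dots,e_n\}$, obtaining a Cauchy problem for the coefficients of $y_n(t,x)=\sum_{k=1}^n g_k^n(t)e_k(x)$ with a right-hand side that is linear in $y_n$ and globally Lipschitz in $t$ (since $K$ is bounded and measurable), so that $y_n$ exists globally on $[0,T]$. The central a priori estimate is obtained by testing with $y_n$ in $L^2(0,1)$:
\begin{equation*}
\frac{1}{2}\frac{d}{dt}\|y_n\|_{L^2}^2 + \|\sqrt{a}\,(y_n)_x\|_{L^2}^2 = -\int_0^1\!\!\int_0^1 K(t,x,\tau)y_n(t,\tau)\,d\tau\, y_n(t,x)\,dx + \int_\omega u\,y_n\,dx.
\end{equation*}
By Fubini and Cauchy--Schwarz, the nonlocal term is bounded by $\|K\|_{L^\infty}\|y_n\|_{L^2}^2$, and the control term by $\tfrac12\|u\|_{L^2(\omega)}^2+\tfrac12\|y_n\|_{L^2}^2$. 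Gronwall's lemma then yields uniform bounds for $y_n$ in $L^\infty(0,T;L^2(0,1))\cap L^2(0,T;H_a^1(0,1))$, together with the quantitative estimate claimed in the statement.

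A bound on $(y_n)_t$ in $L^2(0,T;H_a^{-1}(0,1))$ follows by duality: for any $v\in H_a^1(0,1)$ with $\|v\|_{H_a^1}\le 1$, the equation gives $|\langle (y_n)_t,v\rangle|\le C(\|\sqrt{a}(y_n)_x\|_{L^2}+\|y_n\|_{L^2}+\|u\mathbf{1}_\omega\|_{L^2})$. Weak compactness then allows extraction of a subsequence $y_n\rightharpoonup y$ in $L^2(0,T;H_a^1)$, $(y_n)_t\rightharpoonup y_t$ in $L^2(0,T;H_a^{-1})$, and by the Aubin--Lions lemma $y_n\to y$ strongly in $L^2(0,T;L^2(0,1))$, which is exactly what is needed to pass to the limit in the nonlocal term. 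Continuity $y\in C([0,T];L^2(0,1))$ follows from the classical embedding for functions in $L^2(0,T;H_a^1)\cap H^1(0,T;H_a^{-1})$, and the initial condition $y(0)=y_0$ is recovered in the standard way. Uniqueness is obtained by applying the same energy identity to the difference of two solutions with identical data and invoking Gronwall.

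The main obstacle I anticipate is not the nonlocal term itself, which is a bounded $L^2$-perturbation thanks to $K\in L^\infty$, but rather the correct functional-analytic setup at the degenerate endpoint: in particular, ensuring that the Neumann-type condition $(ay_x)(0)=0$ in the (SD) case is naturally captured by the Galerkin basis and justifies the integration by parts $-\int_0^1 (ay_x)_x v\,dx=\int_0^1 a y_x v_x\,dx$ with no boundary contribution at $x=0$. Once the framework of \cite{Alabau2006} is invoked and the coercivity of $\mathfrak{a}$ on $H_a^1(0,1)$ is established in both regimes, the rest of the argument runs in parallel with the proof of Theorem \ref{Thm_wellposed_nonloc_nondiv}.
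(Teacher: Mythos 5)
Your proposal is correct and follows essentially the same route as the paper: the authors prove Theorem \ref{Thm_wellposed_nonloc_nondiv} in detail by the Faedo--Galerkin method (basis projection, energy estimate tested against $y_m$, Gronwall, duality bound on $y_{m,t}$, weak compactness and passage to the limit, Gronwall again for uniqueness) and explicitly state that the divergence case is "essentially the same", which is what you carry out, correctly noting that the unweighted $L^2$ pairing makes the nonlocal term a bounded perturbation under $K\in L^\infty$ alone. Your only departures are cosmetic: you take an eigenfunction basis rather than a generic orthogonal basis of $H^1_a$ and $L^2$, and you invoke Aubin--Lions for strong convergence where weak convergence already suffices to pass to the limit in the linear nonlocal term.
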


\begin{remark}
\begin{itemize}
  \item Notice that, since $K \in L^{\infty}(Q\times (0,1))$, the assumption \eqref{hyp_K_wellposed} holds immediately when $\frac{1}{a} \in L^1(0,1)$. On the other hand, if $\frac{1}{\sqrt a} \in L^1(0,1)$, the function $K$ needs to satisfy
$$ |K(t,x,\tau)| \leq C a^{\frac{1}{4}}(x), $$
for some positive constant $C$.
  \item Contrary to the non divergence case, we point out that we do not need to impose any additional condition on the kernel $K\in L^{\infty}(Q\times(0,1))$ to ensure the well-posedness of problem \eqref{problem_div}.
\end{itemize}
\end{remark}

In what follows we only give the detailed proof for the non divergence case. For the divergence case, the proof is essentially the same, and we omit the detail here.
\begin{proof}[Proof of Theorem \ref{Thm_wellposed_nonloc_nondiv}]
We will use the well-known Faedo-Galerkin method.\\
\noindent\textbf{step 1.}(Faedo-Galerkin approximation)

Let $\{w_k\}_{k\geq1}$ be an orthogonal basis of the Hilbert spaces $H_{\frac{1}{a}}^1(0,1)$ and $L_{\frac{1}{a}}^2(0, 1)$. We assume, by normalization of $w_k$ in $L_{\frac{1}{a}}^2(0,1)$, that $\| w_k\|_{L_{\frac{1}{a}}^2(0,1)} = 1, \quad \forall\; k \geq 1$.

For each integer $m\geq 1$, we consider $ V_m = [w_1, w_2, \dots, w_m]$, the subspace generated by the
first $m$ vectors of $\{w_k\}_{k\geq1}$. Let us also define the orthogonal projection
$$
\mathcal{P}_m: L^2_{\frac{1}{a}}(0,1) \rightarrow V_m \subset L^2_{\frac{1}{a}}(0,1),
$$
as
$$
\int_{0}^{1}\frac{\mathcal{P}_m(y)v}{a}\,dx= \int_{0}^{1}\frac{y v}{a}\,dx,\quad \text{for all}\quad y\in L^2_{\frac{1}{a}}(0,1)\quad \text{and}\quad v\in V_m.
$$
Let us set $h=1_{\omega} u \in L_{\frac{1}{a}}^2(Q)$. We are looking for an approximate solution $y_m$ for \eqref{problem_nondiv} under the form
\begin{equation}\label{ym}
y_m(t,x) = \sum_{k=1}^{m} \alpha_k^m(t) w_k(x),
\end{equation}
so that
\begin{equation}\label{Pm}
\begin{aligned}
\int_{0}^{1}   \frac{y_{m,t} w_k}{a} \, dx + \int_{0}^{1}  y_{m,x} w_{k,x} \, dx &= - \int_{0}^{1} \left(\int_{0}^{1} K(t,x,\tau)y_m(t,\tau)\,d\tau \right) \frac{w_k}{a} \, dx \\
&\quad+ \int_{0}^{1}  \frac{h w_k}{a} \,dx
\end{aligned}
\end{equation}
for $k=1, \dots, m$, where the initial conditions are such that
\begin{equation}\label{Pmid}
\int_{0}^{1}\frac{y_m(0,x)w_k(x)}{a}\,dx= \int_{0}^{1}\frac{y_0(x)w_k(x)}{a}\,dx.
\end{equation}
The function $y_m(t,x) = \sum_{k=1}^{m} \alpha_k^m(t) w_k(x)$ is a solution of \eqref{Pm}-\eqref{Pmid} if $\alpha^m=(\alpha_1^m,\dots,\alpha_m^m)$ is a solution of the system of ordinary differential equations
\begin{equation}\label{ODEm}
\begin{cases}
&\displaystyle \frac{d}{dt} \alpha_k^m(t) + \Lambda_k \alpha_k^m(t) = F( \alpha_1^m(t), \alpha_2^m(t), \dots,  \alpha_m^m(t))\\
&\alpha_k^m(0) = (y_0, w_k),\\
& k=1, \dots, m,
\end{cases}
\end{equation}
where
$$F(\alpha_1^m(t), \alpha_2^m(t), \dots,  \alpha_m^m(t)) := - \int_{0}^{1} \left(\int_{0}^{1} K(t,x,\tau)y_m(t,\tau)\,d\tau \right) \frac{w_k}{a} \,dx
+\int_{0}^{1} \frac{h w_k}{a}\,dx,$$
$\Lambda_k:= \int_{0}^{1} |w_{kx}|^2 \, dx$ and $(.,.)$ denotes the inner product in $L^2_{\frac{1}{a}}(0,1)$.

According to the classical theory of ordinary differential equations, the initial problem
\eqref{ODEm} admits local solutions, which further implies the local existence of solutions for the problem \eqref{Pm}-\eqref{Pmid}.

In the next step, we make some useful a priori estimates to make sure that the function $\alpha^m$ is defined on the interval $(0, T )$ for all
$T > 0$, which enables us to get global solutions to the problem \eqref{Pm}-\eqref{Pmid}.

\noindent\textbf{step 2.}(A priori estimates)

We multiply \eqref{Pm} by $\alpha_k^m$ and sum up the resulting equations for $k= 1,\cdots, m$. Then, we can easily get
\begin{align}\label{A1}
&\frac{1}{2}\frac{d}{dt} \int_{0}^{1} \frac{|y_m|^2 }{a} \, dx + \int_{0}^{1} |y_{m,x}|^2  \, dx \notag\\
&= - \int_{0}^{1} \left(\int_{0}^{1} K(t,x,\tau)y_m(t,\tau)\,d\tau \right) \frac{y_m}{a(x)} \, dx + \int_{0}^{1}  \frac{h y_m}{a} \, dx.
\end{align}
Applying the Cauchy-Schwarz inequality, we have that  for every $t \in (0, T]$,
\begin{align}\label{A2a}
&- \int_{0}^{1} \left(\int_{0}^{1} K(t,x,\tau)y_m(t,\tau)\,d\tau \right) \frac{y_m}{a(x)} \, dx\notag\\
&=-\int_{0}^{1} \left(\int_{0}^{1}\sqrt{a(\tau)} \frac{K(t,x,\tau)}{\sqrt{a(x)}} \frac{y_m(t,\tau)}{\sqrt{a(\tau)}}\,d\tau \right) \frac{y_m(t,x)}{\sqrt{a(x)}} \, dx \notag\\
& \leq \max\limits_{\tau\in[0,1]}\sqrt{a(\tau)} \Big(\int_{0}^{1}\int_{0}^{1}  \frac{K^2(t,x,\tau)}{a(x)}\,d\tau\,dx\Big)^{\frac{1}{2}} \Big(\int_0^1\frac{|y_m(t,\tau)|^2}{a(\tau)} d\tau\Big)^{\frac{1}{2}} \Big(\int_0^1\frac{|y_m(t,x)|^2}{a(x)} dx\Big)^{\frac{1}{2}} \notag\\
&= \max\limits_{\tau\in[0,1]}\sqrt{a(\tau)} \Big(\int_{0}^{1}\int_{0}^{1}  \frac{K^2(t,x,\tau)}{a(x)}\,d\tau\,dx\Big)^{\frac{1}{2}} \Big(\int_0^1\frac{|y_m(t,\tau)|^2}{a(\tau)} d\tau\Big)\notag\\
&\leq C \|y_m(t)\|_{L^2_{\frac{1}{a}}(0,1)}^2,
\end{align}
where
$C= \max\limits_{\tau\in[0,1]}\sqrt{a(\tau)}\sup\limits_{t\in (0,T)} (\int_{0}^{1}\int_{0}^{1}  \frac{K^2(t,x,\tau)}{a(x)}\,d\tau\,dx) ^{\frac{1}{2}}$.

By \eqref{A1}, \eqref{A2a} and Young's inequality, we obtain that there exists a constant $C > 0$
such that
\begin{align}\label{A2}
\frac{1}{2}\frac{d}{dt} \|y_m(t)\|_{L^2_{\frac{1}{a}}(0,1)}^2 & + \int_{0}^{1} |y_{m,x}|^2  \, dx \notag\\
& \leq C \big( \|y_m(t)\|_{L^2_{\frac{1}{a}}(0,1)}^2 + \|h(t)\|_{L^2_{\frac{1}{a}}(0,1)}^2 \big).
\end{align}
Employing Gronwall's inequality, we get that
\begin{align}\label{A3}
& \|y_m(t)\|_{L^2_{\frac{1}{a}}(0,1)}^2 \leq C \left( \|y_0\|_{L^2_{\frac{1}{a}}(0,1)}^2 + \|h\|_{L^2_{\frac{1}{a}}(Q)}^2 \right),
\end{align}
for every $t\leq T$. This gives
\begin{align}\label{A4}
& \sup\limits_{t\in[0,T]}\|y_m(t)\|_{L^2_{\frac{1}{a}}(0,1)}^2 \leq C \left( \|y_0\|_{L^2_{\frac{1}{a}}(0,1)}^2 + \|h\|_{L^2_{\frac{1}{a}}(Q)}^2 \right),
\end{align}
where $C$ is a positive constant depending on $a$, $K$ and $T$. It turns out from \eqref{A4} that $\alpha^m$ is bounded and therefore the solutions
$y_m$ to the initial value problem \eqref{Pm}-\eqref{Pmid} are global.

\noindent\textbf{step 3.}(Passage to the limit)

In this step, we will select from the above solutions $y_m$ of the approximate problems \eqref{Pm}-\eqref{Pmid} a subsequence
and prove that when letting $m \rightarrow \infty$, they converge to a global weak solution for \eqref{problem_nondiv}. To this aim, we will need some uniform estimates.

We claim that there exists a positive constant $C$ which is independent of $m$, such that
\begin{align}\label{A7}
& \sup\limits_{t\in[0,T]}\|y_m(t)\|_{L^2_{\frac{1}{a}}(0,1)}^2 + \int_{0}^{T} \|y_m(t)\|_{H_{\frac{1}{a}}^1(0,1)}^2\,dt \notag\\
&+  \int_{0}^{T} \|y_{m,t}(t)\|_{H_{\frac{1}{a}}^{-1}(0,1)}^2\,dt\leq C \left( \|y_0\|_{L^2_{\frac{1}{a}}(0,1)}^2 + \|h\|_{L^2_{\frac{1}{a}}(Q)}^2 \right).
\end{align}
Indeed, from estimates \eqref{A2} and \eqref{A3} we immediately obtain
\begin{align*}
\int_{0}^{T}\!\!\!\int_{0}^{1} |y_{m,x}|^2\,dxdt\leq C \left(\|y_m(0)\|_{L^2_{\frac{1}{a}}(0,1)}^2 + \|h\|_{L^2_{\frac{1}{a}}(Q)}^2 \right),
\end{align*}
which, together with \eqref{A3}, implies that
\begin{align}\label{A5}
\int_{0}^{T} \|y_m(t)\|_{H_{\frac{1}{a}}^1(0,1)}^2\,dt \leq C \left( \|y_0\|_{L^2_{\frac{1}{a}}(0,1)}^2 + \|h\|_{L^2_{\frac{1}{a}}(Q)}^2 \right).
\end{align}
Combining \eqref{A4} and \eqref{A5}, we deduce that
\begin{align}\label{A6}
& \sup\limits_{t\in[0,T]}\|y_m(t)\|_{L_{\frac{1}{a}}^2(0,1)}^2 + \int_{0}^{T} \|y_m(t)\|_{H_{\frac{1}{a}}^1(0,1)}^2\,dt \notag\\
&\leq C \left( \|y_0\|_{L_{\frac{1}{a}}^2(0,1)}^2 + \|h\|_{L_{\frac{1}{a}}^2(Q)}^2 \right).
\end{align}
On the other hand, reasoning as in \cite[Theorem 2, Chapter 7]{Evans}, one can show that
\begin{align*}
\int_{0}^{T} \|y_{m,t}(t)\|_{H_{\frac{1}{a}}^{-1}(0,1)}^2\,dt
\leq C \left( \|y_0\|_{L^2_{\frac{1}{a}}(0,1)}^2 + \|h\|_{L^2_{\frac{1}{a}}(Q)}^2 \right).
\end{align*}
The last inequality together with \eqref{A6} gives \eqref{A7}.

Next, in order to build a weak solution of our initial/boundary-value problem \eqref{problem_nondiv}, we pass to limits as $m\rightarrow \infty$.
In fact, observe that from the energy estimates \eqref{A7}, we have that $\{y_m\}_{m\geq1}$ and $\{y_{m,t}\}_{m\geq1}$ are bounded in $L^2(0, T; H_{\frac{1}{a}}^1(0,1))$ and $L^2(0, T; H_{\frac{1}{a}}^{-1}(0,1))$. Hence, there exists a subsequence $\{y_{m_l}\}_{l\geq1}$ and a function $y \in L^2(0, T; H_{\frac{1}{a}}^1(0,1))$, with $y_t \in L^2(0, T; H_{\frac{1}{a}}^{-1}(0,1))$, such that
\begin{equation}\label{cv_seq}
\left\{
\begin{array}{ll}
y_{m_l} \rightharpoonup y \quad \text{weakly in} \quad L^2(0, T; H_{\frac{1}{a}}^1(0,1)) \\
y_{m_l,t} \rightharpoonup y_t \quad \text{weakly in} \quad L^2(0, T; H_{\frac{1}{a}}^{-1}(0,1)).
\end{array}
\right.
\end{equation}

Now, fix an integer $M \in \mathbb{N}$ and take a function $v\in C^1([0,T]; H_{\frac{1}{a}}^1(0,1))$ having the form
\begin{equation}\label{fucnt_v_p}
v(t) = \sum_{k=1}^{M} \alpha_k(t) w_k,
\end{equation}
where $\alpha_k$ $(1\leq k\leq M)$ are given smooth functions.

Let $m > M$. Multiplying \eqref{Pm} by $\alpha_k(t)$, summing up
with respect to $k$ and integrating on $(0, T)$, we derive
\begin{align}\label{step3_a}
& \int\!\!\!\!\!\int_{Q} \frac{y_{m,t} v}{a} \, dt + \int\!\!\!\!\!\int_{Q} y_{m,x} v_{x} \, dx dt\notag\\
& = - \int\!\!\!\!\!\int_{Q} \left(\int_{0}^{1} K(t,x,\tau)y_m(t,\tau)\,d\tau \right) \frac{v}{a} \, dx dt+ \int\!\!\!\!\!\int_{Q}  \frac{h v}{a} \, dx dt.
\end{align}
Set $m=m_l$. Passing to the limit we immediately get
\begin{align}\label{step3_b}
& \int\!\!\!\!\!\int_{Q} \frac{y_{t} v}{a} dt + \int\!\!\!\!\!\int_{Q} y_{x} v_{x} \, dx dt\notag\\
& = - \int\!\!\!\!\!\int_{Q} \left(\int_{0}^{1} K(t,x,\tau)y(t,\tau)\,d\tau \right) \frac{v}{a} \, dx dt+ \int\!\!\!\!\!\int_{Q}  \frac{h v}{a} \, dx dt.
\end{align}
Then, by a density argument, the above identity holds for all $v\in L^2(0, T; H_{\frac{1}{a}}^{1}(0,1))$.
Furthermore, from \cite[Theorem 11.4]{chipot2000}, we also have that $y\in C([0,T];  L^2_{\frac{1}{a}}(0,1))$.

It remains to show that $y(0,\cdot) = y_0$. From \eqref{step3_b}, we have
\begin{align*}
& - \int\!\!\!\!\!\int_{Q} \frac{y v_t}{a}\, dt + \int\!\!\!\!\!\int_{Q} y_{x} v_{x} \, dx dt \notag\\
& = - \int\!\!\!\!\!\int_{Q} \left(\int_{0}^{1} K(t,x,\tau)y(t,\tau)\,d\tau \right) \frac{v}{a} \, dx dt+ \int\!\!\!\!\!\int_{Q} \frac{h v}{a} \, dx dt + \int_0^1  \frac{y(0) v(0)}{a}\,dx,
\end{align*}
for all $v\in C^1([0,T]; H_{\frac{1}{a}}^1(0,1))$ with $v(T)=0$.		
Similarly, from \eqref{step3_a}, we also have
\begin{align*}
&- \int\!\!\!\!\!\int_{Q} \frac{y_m v_t}{a}\, dt + \int\!\!\!\!\!\int_{Q} y_{m,x} v_{x} \, dx dt\\
& = - \int\!\!\!\!\!\int_{Q} \left(\int_{0}^{1} K(t,x,\tau)y_m(t,\tau)\,d\tau \right) \frac{v}{a} \, dx dt+ \int\!\!\!\!\!\int_{Q}  \frac{h v}{a} \, dx dt + \int_0^1  \frac{y_m(0) v(0)}{a}\,dx.
\end{align*}
For $m=m_l$, since $y_{m_{l}}(0)\rightarrow y_0$ in $L^2_{\frac{1}{a}}(0,1)$, one deduces after passing to the limit that
\begin{align*}
&- \int\!\!\!\!\!\int_{Q} \frac{y v_t}{a}\, dt + \int\!\!\!\!\!\int_{Q} y_{x} v_{x} \, dx dt\\
& = - \int\!\!\!\!\!\int_{Q} \left(\int_{0}^{1} K(t,x,\tau)y(t,\tau)\,d\tau \right) \frac{v}{a} \, dx dt+ \int\!\!\!\!\!\int_{Q}  \frac{h v}{a} \, dx dt + \int_0^1  \frac{y_0 v(0)}{a}\,dx.
\end{align*}
Hence
$$<y_0-y(0),v(0)>_{L^2_{\frac{1}{a}}(0,1)}=0,\quad \forall\; v(0)\in H^1_{\frac{1}{a}}(0,1),$$
and by density of $H^1_{\frac{1}{a}}(0,1)$ in $L^2_{\frac{1}{a}}(0,1)$ we conclude that
$y(0) = y_0.$

\textbf{Step 4.} (Uniqueness).

Let $y$ be a solution of \eqref{problem_nondiv}. Proceeding as in \eqref{A7}, one can show an a priori estimate of the
form
\begin{align*}
\|y\|_{L^2(0,T;H_{\frac{1}{a}}^1(0,1))}^2 \leq C \left( \|y_0\|_{L^2_{\frac{1}{a}}(0,1)}^2 + \|h\|_{L^2_{\frac{1}{a}}(Q)}^2 \right).
\end{align*}
From this and linearity, uniqueness of solutions is obvious.
\end{proof}


\section{Null controllability for the nonhomogeneous systems}\label{section_null_nonhomo}
In this section, we will solve the null controllability problem for the following nonhomogeneous degenerate
systems in non divergence and divergence form:
\begin{equation}\label{nonhom_problem}
\left\{
\begin{array}{ll}
\displaystyle y_t - ay_{xx}  = f + 1_{\omega} u, & (t,x) \in Q,\\
y (t, 0) = y (t, 1) =0,& t \in (0,T),
\\
y(0,x)=y_0(x), & x \in (0,1)
\end{array}
\right.
\end{equation}
and
\begin{equation}\label{nonhom_problem_div}
\left\{
  \begin{array}{ll}
y_t - (ay_{x})_x = f + 1_{\omega} u, & (t,x) \in Q,\\
y(t, 1) = 0, & t \in (0,T),\\
\left\{
\begin{array}{ll}
y (t, 0) = 0,\quad \text{in the (WD) case},\\
(a y_{x})(t, 0)= 0, \quad \text{in the (SD) case},
\end{array}
\right.
& t \in (0,T),
\\
y(0,x)=y_0(x), & x \in (0,1),
  \end{array}
\right.
\end{equation}
where $f$ is a given source term.

First, let us recall the following well-posedness results.
\begin{proposition}\cite[Theorem 2.3]{FMpress}\label{prop-Well-posed_nonhom_nondiv}
For all $f\in L_{\frac{1}{a}}^2(Q)$, $ u \in L_{\frac{1}{a}}^2(Q)$ and $y_0 \in L_{\frac{1}{a}}^2(0,1)$, there exists a unique solution
\[
y \in \mathcal{W}:= L^2(0, T; H_{\frac{1}{a}}^1(0,1))\cap C([0, T]; L_{\frac{1}{a}}^2(0,1))
\]
of \eqref{nonhom_problem} and
\begin{align}\label{energy-nonhom1}
&\sup_{t\in[0,T]}\|y(t)\|^2_{L_{\frac{1}{a}}^2(0,1)} + \int_0^T \|y\|^2_{ H_{\frac{1}{a}}^1(0,1)} \, dt \notag\\
& \leq C \Big(\|y_0\|^2_{L_{\frac{1}{a}}^2(0,1)} +\|f\|^2_{L_{\frac{1}{a}}^2(Q)} + \|u\|^2_{L_{\frac{1}{a}}^2(Q_{\omega})} \Big),
\end{align}
for some positive constant $C$.
Moreover, if  $y_0\in H_{\frac{1}{a}}^{1}(0,1)$, then
\[
y \in \mathcal{Z}:= L^2(0, T; H_{\frac{1}{a}}^{2}(0,1))\cap H^1(0, T; L_{\frac{1}{a}}^2(0,1)) \cap  C([0,T]; H^1_{\frac{1}{a}}(0,1))
\]
and
\[
\begin{aligned}
&\sup_{t\in[0,T]}\|y(t)\|^2_{ H_{\frac{1}{a}}^1(0,1)} +\int_0^T \Big( \|y\|^2_{ H_{\frac{1}{a}}^2(0,1)}  + \|y_t\|^2_{ L^2_{\frac{1}{a}}(0,1)} \Big)\, dt\\
&\leq C \Big(\|y_0\|^2_{H_{\frac{1}{a}}^{1}(0,1)}+\|f\|^2_{L_{\frac{1}{a}}^2(Q)} + \|u\|^2_{L_{\frac{1}{a}}^2(Q_{\omega})} \Big),
\end{aligned}
\]
for some positive constant $C$.
\end{proposition}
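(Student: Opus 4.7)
The plan is to adapt the Faedo--Galerkin scheme used in the proof of Theorem \ref{Thm_wellposed_nonloc_nondiv} to the local problem \eqref{nonhom_problem}; the argument is substantially simpler here because the nonlocal term is replaced by a prescribed source $f$. I would take the same orthogonal basis $\{w_k\}$ of $L_{\frac{1}{a}}^2(0,1)$ and $H_{\frac{1}{a}}^1(0,1)$, project \eqref{nonhom_problem} onto $V_m=[w_1,\dots,w_m]$, and solve the resulting linear ODE system for the coefficients $\alpha_k^m(t)$; global existence on $(0,T)$ is immediate from the first energy estimate and linearity.

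Testing the approximate equation by $\alpha_k^m(t)$ and summing over $k$ yields
\begin{equation*}
\frac{1}{2}\frac{d}{dt}\|y_m\|_{L_{\frac{1}{a}}^2(0,1)}^2 + \int_0^1 y_{m,x}^2\,dx = \int_0^1 \frac{(f+1_\omega u)\,y_m}{a}\,dx,
\end{equation*}
and Cauchy--Schwarz, Young and Gronwall, together with the dual bound on $y_{m,t}$ in $L^2(0,T;H_{\frac{1}{a}}^{-1})$ (obtained as in step 3 of the previous proof), give the estimate \eqref{energy-nonhom1} uniformly in $m$. Weak compactness in $L^2(0,T;H_{\frac{1}{a}}^1)$ and $L^2(0,T;H_{\frac{1}{a}}^{-1})$ and a standard passage to the limit then identify a weak solution $y\in \mathcal{W}$, with time continuity in $L_{\frac{1}{a}}^2(0,1)$ provided by \cite[Theorem 11.4]{chipot2000}; the initial condition $y(0)=y_0$ is recovered by the same test-function identification used earlier, and uniqueness follows at once from linearity applied to the difference of two solutions and this basic energy estimate.

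For the improved regularity statement, I would choose the basis $\{w_k\}$ to consist of eigenfunctions of the degenerate operator $u \mapsto -a u_{xx}$ with homogeneous Dirichlet conditions, so that the projection of $y_0$ onto $V_m$ is bounded in $H_{\frac{1}{a}}^1(0,1)$ uniformly by $\|y_0\|_{H_{\frac{1}{a}}^1}$. Testing by $(\alpha_k^m)'(t)$ and summing (the boundary contributions in the spatial integration by parts vanish since $w_k(0)=w_k(1)=0$), and absorbing the $\|y_{m,t}\|^2_{L^2_{\frac{1}{a}}}$ term via Young's inequality, I would arrive at
\begin{equation*}
\frac{1}{2}\|y_{m,t}\|_{L_{\frac{1}{a}}^2}^2 + \frac{1}{2}\frac{d}{dt}\|y_{m,x}\|_{L^2(0,1)}^2 \le \frac{1}{2}\|f+1_\omega u\|_{L_{\frac{1}{a}}^2}^2,
\end{equation*}
which, after integration in time, produces the $L^\infty(0,T;H_{\frac{1}{a}}^1)$ and $H^1(0,T;L_{\frac{1}{a}}^2)$ bounds. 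The $L^2(0,T;H_{\frac{1}{a}}^2)$ estimate is then free: rewriting the equation as $a y_{xx}=y_t-f-1_\omega u$ gives $\|a y_{xx}\|_{L^2_{\frac{1}{a}}} = \|y_t-f-1_\omega u\|_{L^2_{\frac{1}{a}}}$, which is already controlled. The main technical obstacle is choosing a Galerkin basis compatible with the degenerate operator so that the $(\alpha_k^m)'(t)$ test and the uniform $H_{\frac{1}{a}}^1$-approximation of $y_0$ are both rigorous; this is standard once one invokes the spectral theory of $-a\partial_{xx}$ on $H_{\frac{1}{a}}^2(0,1)$, after which all estimates pass to the weak limit by lower semicontinuity of the norms.
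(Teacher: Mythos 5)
The paper does not actually prove this proposition: it is quoted verbatim from \cite[Theorem 2.3]{FMpress}, where (as in \cite{CFR2008}) the result is obtained by semigroup methods --- one shows that $Au:=au_{xx}$ with domain $H^2_{\frac{1}{a}}(0,1)$ generates an analytic semigroup on $L^2_{\frac{1}{a}}(0,1)$, and the two regularity levels then follow from the standard theory of the inhomogeneous abstract Cauchy problem. Your proposal is a genuinely different, self-contained route: you specialize the Faedo--Galerkin scheme that the paper uses for the nonlocal Theorem \ref{Thm_wellposed_nonloc_nondiv} to the case $K=0$ with source $f+1_\omega u$, which correctly yields existence, uniqueness and \eqref{energy-nonhom1}; and for the second regularity level you add the two standard ingredients that the paper's nonlocal proof does not need, namely a spectral basis of $-a\partial_{xx}$ (so that $\mathcal{P}_m y_0$ is bounded in $H^1_{\frac{1}{a}}$ by $\|y_0\|_{H^1_{\frac{1}{a}}}$) and the test function $y_{m,t}$, after which the $H^2_{\frac{1}{a}}$ bound indeed falls out of the equation $ay_{xx}=y_t-f-1_\omega u$ and the identity $\|y\|^2_{H^2_{\frac{1}{a}}}=\|y\|^2_{H^1_{\frac{1}{a}}}+\|ay_{xx}\|^2_{L^2_{\frac{1}{a}}}$. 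The trade-off is that the semigroup route gives the analyticity and maximal-regularity structure for free once generation is established, whereas your route is more elementary but must justify two points you only gesture at: the compactness of $H^1_{\frac{1}{a}}(0,1)=H^1_0(0,1)\hookrightarrow L^2_{\frac{1}{a}}(0,1)$ (needed for the eigenbasis to exist; this is where Hypothesis \ref{Hypoth_1_nondiv} and the Hardy--Poincar\'e inequality \eqref{hardyineq_nondiv} enter, as in \cite{CFR2008}), and the continuity $y\in C([0,T];H^1_{\frac{1}{a}}(0,1))$, which does not follow from the a priori bounds alone but from the intermediate-space argument for $y\in L^2(0,T;H^2_{\frac{1}{a}})$ with $y_t\in L^2(0,T;L^2_{\frac{1}{a}})$. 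Both are standard, so I regard the proposal as correct.
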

\begin{proposition}\cite[Theorem 2.6]{Alabau2006}\label{prop-Well-posed_nonhom_div}
For all $f\in L^2(Q)$, $u \in L^2(Q)$ and $y_0 \in L^2(0,1)$, there exists a unique solution
\[
y \in \mathcal{U}:= L^2(0, T; H_{a}^1(0,1))\cap C([0, T]; L^2(0,1))
\]
of \eqref{nonhom_problem_div} and
\[
\begin{aligned}
&\sup_{t\in[0,T]}\|y(t)\|^2_{L^2(0,1)} + \int_0^T \|y\|^2_{ H_{a}^1(0,1)} \, dt \\
& \leq C \Big(\|y_0\|^2_{L^2(0,1)} +\|f\|^2_{L^2(Q)} + \|u\|^2_{L^2(Q_{\omega})} \Big),
\end{aligned}
\]
for some positive constant $C$.
Moreover, if  $y_0\in H_{a}^{1}(0,1)$, then
\[
y \in \mathcal{V}:= L^2(0, T; H_{a}^{2}(0,1))\cap H^1(0, T; L^2(0,1)) \cap  C([0,T]; H^1_{a}(0,1))
\]
and
\[
\begin{aligned}
&\sup_{t\in[0,T]}\|y(t)\|^2_{ H_{a}^1(0,1)} +\int_0^T \Big( \|y\|^2_{ H_{a}^2(0,1)}  + \|y_t\|^2_{L^2(0,1)} \Big)\, dt \notag\\
&\leq C \Big(\|y_0\|^2_{H_{a}^{1}(0,1)}+\|f\|^2_{L^2(Q)} + \|u\|^2_{L^2(Q_{\omega})} \Big),
\end{aligned}
\]
for some positive constant $C$.
\end{proposition}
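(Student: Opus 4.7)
The plan is to adapt the Faedo--Galerkin construction from the proof of Theorem \ref{Thm_wellposed_nonloc_nondiv}, replacing the nondivergence bilinear form by the divergence one $\int_0^1 a\,y_x v_x\,dx$ and dropping the nonlocal term. First I would fix an orthogonal basis $\{w_k\}_{k\ge 1}$ of $H_a^1(0,1)$, orthonormal in $L^2(0,1)$, obtained from the spectral decomposition of the operator $Ay := -(ay_x)_x$ with the boundary conditions prescribed by the (WD) or (SD) hypothesis (Dirichlet at $x=1$, and at $x=0$ either Dirichlet in (WD) or the flux condition $(ay_x)(0)=0$ in (SD)). Since $H_a^1(0,1)$ embeds compactly into $L^2(0,1)$, the operator $A$ is nonnegative self-adjoint with compact resolvent, so the basis exists. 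Seeking $y_m=\sum_{k=1}^m \alpha_k^m(t)w_k(x)$ satisfying the Galerkin equation together with $\alpha_k^m(0)=(y_0,w_k)_{L^2}$ yields a linear first-order ODE system for $\alpha^m$, hence a global solution.

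Multiplying the Galerkin equation by $\alpha_k^m(t)$ and summing over $k$ produces
\[
\frac{1}{2}\frac{d}{dt}\|y_m(t)\|_{L^2(0,1)}^2 + \int_0^1 a|y_{m,x}|^2\,dx = \int_0^1 (f+1_\omega u)\,y_m\,dx,
\]
after which Cauchy--Schwarz, Young's inequality, and Gronwall's lemma give the first energy estimate. This step is genuinely simpler than in Theorem \ref{Thm_wellposed_nonloc_nondiv}, because the natural dissipation $\int_0^1 a|y_{m,x}|^2\,dx$ coincides with $\|\sqrt a\,y_{m,x}\|_{L^2}^2$, which is precisely the nontrivial part of $\|y_m\|_{H_a^1}^2$; no weighted Hardy-type inequality is needed. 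The bound on $y_{m,t}$ in $L^2(0,T;H_a^{-1}(0,1))$ then follows by testing against the projection onto $V_m$ of an arbitrary $\varphi\in H_a^1(0,1)$ and using the orthogonality of the basis. Weak compactness yields a subsequence converging to some $y\in\mathcal U$ with $y_t\in L^2(0,T;H_a^{-1}(0,1))$; passage to the limit in the weak formulation and the recovery of the initial datum $y(0)=y_0$ are carried out word for word as in Step 3 of the proof of Theorem \ref{Thm_wellposed_nonloc_nondiv}, and uniqueness is an immediate consequence of linearity combined with the same energy estimate.

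For the higher regularity under $y_0\in H_a^1(0,1)$, I would multiply the Galerkin equation by $\alpha_{k,t}^m(t)$ (equivalently, test against $y_{m,t}$) and integrate by parts in $x$. This produces
\[
\frac{1}{2}\frac{d}{dt}\int_0^1 a|y_{m,x}|^2\,dx + \int_0^1 |y_{m,t}|^2\,dx = \int_0^1 (f+1_\omega u)\,y_{m,t}\,dx,
\]
provided the boundary contribution $[ay_{m,x}y_{m,t}]_0^1$ vanishes. The main obstacle is justifying this at $x=0$: it is automatic in the (WD) case since $y_{m,t}(t,0)=0$, but in the (SD) case it relies on the inherited condition $(aw_{k,x})(0)=0$, which forces the basis $\{w_k\}$ to lie in the domain of $A$ as described by $H_a^2(0,1)$ in the (SD) case. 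Once this technical point is settled, Young's inequality and Gronwall produce the uniform estimate for $\sup_t\|y_m\|_{H_a^1}^2+\int_0^T\bigl(\|y_m\|_{H_a^2}^2+\|y_{m,t}\|_{L^2}^2\bigr)\,dt$, and weak convergence in $L^2(0,T;H_a^2(0,1))\cap H^1(0,T;L^2(0,1))$, together with the continuous embedding of this intersection into $C([0,T];H_a^1(0,1))$, delivers the claimed regularity and estimate for $y$.
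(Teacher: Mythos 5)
Your Galerkin argument is viable, but note that the paper does not prove Proposition \ref{prop-Well-posed_nonhom_div} at all: it is quoted from \cite[Theorem 2.6]{Alabau2006}, where well-posedness is obtained by operator-theoretic means --- the operator $Ay=(ay_x)_x$ with domain $H_a^2(0,1)$ (incorporating the (WD) or (SD) boundary condition at $x=0$) is shown to be self-adjoint and nonpositive on $L^2(0,1)$, hence the generator of an analytic contraction semigroup, and the two regularity levels for the inhomogeneous Cauchy problem then follow from the standard abstract theory (energy identity for $y_0\in L^2$, maximal regularity for $y_0$ in the form domain $H_a^1$). Your Faedo--Galerkin reconstruction reaches the same conclusions, is self-contained, and runs parallel to the paper's own proof of Theorem \ref{Thm_wellposed_nonloc_nondiv}; the ingredients you invoke (compactness of $H_a^1(0,1)\hookrightarrow L^2(0,1)$ under (WD)/(SD), hence a spectral basis for $A$) are available and are themselves established in \cite{Alabau2006}, so there is no circularity in practice, though strictly speaking you are re-deriving the cited result from facts proved in the same reference. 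Two small points. First, in the second energy estimate no integration by parts in $x$ is needed: multiplying the weak Galerkin identity $\int_0^1 y_{m,t}w_k\,dx+\int_0^1 a\,y_{m,x}w_{k,x}\,dx=\int_0^1(f+1_\omega u)w_k\,dx$ by $(\alpha_k^m)'(t)$ and summing gives directly $\int_0^1 a\,y_{m,x}y_{m,tx}\,dx=\frac12\frac{d}{dt}\int_0^1 a\,y_{m,x}^2\,dx$ with no boundary term, so the (SD) flux condition you worry about never enters (it is only relevant if you start from the strong form). Second, the $H_a^2$ bound does not come from that identity alone: you should add that $-(ay_{m,x})_x$ equals the $V_m$-projection of $f+1_\omega u$ minus $y_{m,t}$, so its $L^2$ norm is controlled by the $y_{m,t}$ bound already obtained; and at $t=0$ the spectral projections are orthogonal in $H_a^1$ as well, so $\|y_m(0)\|_{H_a^1(0,1)}\le\|y_0\|_{H_a^1(0,1)}$, which is needed to launch the Gronwall argument at the higher regularity level. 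With these observations your proof closes.
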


For our further results, a fundamental role is played by the following Hardy-Poincar\'e inequality.
\begin{proposition}\cite[Proposition 2.6]{CFR2008}\label{Prop_Hardy_nondiv}
Assume that Hypothesis \ref{Hypoth_1_nondiv} is satisfied. Then, there exists $C > 0$ such that
\begin{equation}\label{hardyineq_nondiv}
\int_{0}^{1}\frac{y^2}{a}\,dx \leq C \int_{0}^{1} y^2_x\,dx, \qquad \forall \;y \in H_0^1(0,1).
\end{equation}
\end{proposition}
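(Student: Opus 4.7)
The plan is to reduce the statement to a classical one-dimensional weighted Hardy inequality with a pure power weight, and then prove that inequality by a standard integration-by-parts followed by Cauchy--Schwarz and Young's inequality. First, I would exploit Hypothesis \ref{Hypoth_1_nondiv}: since $x\mapsto x^{\alpha}/a(x)$ is nondecreasing on $(0,1]$, it is bounded above by its value at $x=1$, which gives
\[
\frac{1}{a(x)}\le\frac{1}{a(1)\,x^{\alpha}},\qquad x\in(0,1].
\]
Hence it suffices to prove the pure-power Hardy inequality
\[
\int_{0}^{1}\frac{y^{2}}{x^{\alpha}}\,dx\le C_{\alpha}\int_{0}^{1}(y')^{2}\,dx,\qquad \forall\,y\in H_{0}^{1}(0,1),
\]
for $\alpha\in(0,2)$, with $C_{\alpha}$ depending only on $\alpha$.

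Assume first $\alpha\ne 1$. By density I reduce to $y\in C_{c}^{\infty}(0,1)$, integrate on $(\varepsilon,1)$ using the antiderivative $x^{1-\alpha}/(1-\alpha)$ of $x^{-\alpha}$, and get
\[
\int_{\varepsilon}^{1}\frac{y^{2}}{x^{\alpha}}\,dx
=\Bigl[\tfrac{x^{1-\alpha}}{1-\alpha}\,y^{2}\Bigr]_{\varepsilon}^{1}
-\frac{2}{1-\alpha}\int_{\varepsilon}^{1}x^{1-\alpha}\,y\,y'\,dx.
\]
The boundary term at $x=1$ vanishes since $y(1)=0$. For the one at $x=\varepsilon$, I use the Sobolev bound $|y(\varepsilon)|^{2}\le\varepsilon\,\|y'\|_{L^{2}}^{2}$ (from $y(x)=\int_{0}^{x}y'$ and Cauchy--Schwarz), so that $\varepsilon^{1-\alpha}y(\varepsilon)^{2}\le\varepsilon^{2-\alpha}\|y'\|_{L^{2}}^{2}\to 0$ precisely because $\alpha<2$. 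Passing to the limit and applying Cauchy--Schwarz,
\[
\int_{0}^{1}\frac{y^{2}}{x^{\alpha}}\,dx
=-\frac{2}{1-\alpha}\int_{0}^{1}x^{1-\alpha}\,y\,y'\,dx
\le\frac{2}{|1-\alpha|}\Bigl(\int_{0}^{1}\tfrac{y^{2}}{x^{\alpha}}\,dx\Bigr)^{\!1/2}\!\!\Bigl(\int_{0}^{1}x^{2-\alpha}(y')^{2}\,dx\Bigr)^{\!1/2}.
\]
Dividing and using $x^{2-\alpha}\le 1$ on $(0,1)$ (since $\alpha<2$), I obtain
\[
\int_{0}^{1}\frac{y^{2}}{x^{\alpha}}\,dx\le\frac{4}{(1-\alpha)^{2}}\int_{0}^{1}(y')^{2}\,dx,
\]
which combined with the first step yields the desired inequality with $C=\frac{4}{(1-\alpha)^{2}a(1)}$.

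The borderline case $\alpha=1$ I would handle by using the monotonicity hypothesis once more: since $x/a(x)$ is nondecreasing and nonnegative and $x^{1/2}$ is nondecreasing and nonnegative, the product $x^{3/2}/a(x)=x^{1/2}\cdot x/a(x)$ is also nondecreasing, so Hypothesis \ref{Hypoth_1_nondiv} actually holds with the larger exponent $\alpha'=3/2\in(1,2)$, and the previous case applies.

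The main obstacle is really just the justification of the integration by parts at the singular endpoint $x=0$; the argument relies on the fact that any $y\in H_{0}^{1}(0,1)$ satisfies $y(x)=O(x^{1/2})$ near $0$, and this $x^{1/2}$ decay is exactly what makes the boundary contribution $\varepsilon^{1-\alpha}y(\varepsilon)^{2}$ vanish in the window $\alpha\in(0,2)$. Everything else is a routine application of Cauchy--Schwarz together with the trivial bound $x^{2-\alpha}\le 1$ on $(0,1)$.
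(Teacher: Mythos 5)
Your proof is correct. Note that the paper itself does not prove this proposition: it is imported verbatim from \cite[Proposition 2.6]{CFR2008}, so what you have produced is a self-contained replacement for a cited result rather than an alternative to an in-paper argument. Your route is the standard one and it works: the monotonicity of $x\mapsto x^{\alpha}/a(x)$ gives $1/a(x)\le 1/(a(1)x^{\alpha})$ (exactly as in Remark \ref{remark_WSD}), reducing everything to the power-weight Hardy inequality, which you then establish by integration by parts on $(\varepsilon,1)$, the decay $|y(\varepsilon)|^{2}\le\varepsilon\|y'\|_{L^{2}}^{2}$ to kill the boundary term (this is where $\alpha<2$ enters), Cauchy--Schwarz with the splitting $x^{1-\alpha}=x^{-\alpha/2}\cdot x^{1-\alpha/2}$, and $x^{2-\alpha}\le 1$. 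The absorption step (dividing by $\bigl(\int_{0}^{1}x^{-\alpha}y^{2}\bigr)^{1/2}$) is legitimate because that quantity is finite a priori --- indeed $y^{2}(x)\le x\|y'\|_{L^{2}}^{2}$ gives $\int_{0}^{1}x^{-\alpha}y^{2}\,dx\le\|y'\|_{L^{2}}^{2}\int_{0}^{1}x^{1-\alpha}\,dx=\|y'\|_{L^{2}}^{2}/(2-\alpha)<\infty$ --- though you should say so explicitly. Your treatment of the borderline case $\alpha=1$ by upgrading the hypothesis to $\alpha'=3/2$ (product of two nonnegative nondecreasing functions) is clean and correct. Two small remarks: first, the a priori bound just displayed already proves the whole inequality in one line, with constant $1/((2-\alpha)a(1))$, uniformly for $\alpha\in(0,2)$ and with no case distinction at $\alpha=1$, so the integration-by-parts machinery buys you nothing here except a different (and for $\alpha$ near $1$, much worse) constant $4/(1-\alpha)^{2}$; second, the restriction $\alpha<2$ is not actually needed for the power-weight Hardy inequality itself (classical Hardy covers $\alpha=2$), it is only needed elsewhere in the paper, so your parenthetical ``precisely because $\alpha<2$'' slightly overstates where that hypothesis is essential.
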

As a consequence of Proposition \ref{Prop_Hardy_nondiv}, one can see that the norm $\|\cdot\|_{H_{\frac{1}{a}}^1}$ is equivalent to the norm of $H_0^1(0,1)$, and hence, the Banach spaces $ H_{\frac{1}{a}}^1(0, 1)$ and $H_0^1(0, 1)$ coincide.

In the next subsections, we prove new Carleman estimates for the adjoint parabolic problems
associated to the nonhomogeneous degenerate systems \eqref{nonhom_problem} and \eqref{nonhom_problem_div}, which will provide the null controllability property for the initial problems.
\subsection{Carleman estimates for the problem in non divergence form}
In the following, we concentrate on the next adjoint problem associated  to \eqref{nonhom_problem} given by
\begin{equation}\label{adjoint_problem}
\left\{
\begin{array}{ll}
\displaystyle v_t +av_{xx}= g, & (t,x) \in Q,\\
v(t, 0) = v(t, 1) =0,& t \in (0,T),\\
v(T,x)=v_T(x), & x \in (0,1).
\end{array}
\right.
\end{equation}
Here, we assume that $g\in L_{\frac{1}{a}}^2(Q)$, while on the coefficient $a$ we make the following assumptions:
\begin{hypothesis}\label{Hypoth_2_nondiv}
The function $a\in C[0,1]\cap C^2(0,1]$ is such that $a(0) = 0$, $a>0$ on $(0, 1]$ and
\begin{enumerate}
\item there exists $\alpha \in [0,2)$ such that $$x a'(x) \leq \alpha a(x), \quad \forall \; x \in (0,1].$$
\item setting $\rho(x):= \frac{x a'(x)}{a(x)}$  there exists $\rho_{xx}$ for $x \in (0,1]$ and $c >0$ such that $$ \left(\frac{x a'(x)}{a(x)}\right)_{xx} \leq c \frac{1}{a(x)}, \quad \forall \; x \in (0,1].$$
\end{enumerate}
\end{hypothesis}
Observe that this assumption is more general than the one made in \cite{CFR2008} (see also \cite{yamamoto2020}). Moreover, Hypothesis \ref{Hypoth_2_nondiv}.1 implies Hypothesis \ref{Hypoth_1_nondiv}.

The goal of this subsection is to prove crucial estimates of Carleman's type for the nonhomogeneous system \eqref{adjoint_problem}. For this purpose, we introduce the weight function $\phi$ defined as follows
\begin{equation}\label{phi}
\phi(t,x):= \lambda\theta(t)(p(x) - \beta \|p\|_{L^{\infty}(0,1)}),
\end{equation}
where
\begin{equation}\label{weightfunc_deg_nodiv}
\begin{aligned}
p(x):= \int_{0}^{x}\frac{y}{a(y)} e^{y^2}\,dy ,\quad  \theta(t):=\frac{1}{[t(T-t)]^2},
\end{aligned}
\end{equation}
where $\beta>1$ and $\lambda>0$ are constants to be specified later. Observe that $ \phi(t,x)  <0$ for all $(t,x) \in Q$ and
$\phi(t,\cdot)  \rightarrow - \infty \, \text{ as } t \rightarrow
0^+, T^-$. Moreover, let us set:
\[
\psi(x):=\lambda(p(x) - \beta \|p\|_{L^{\infty}(0,1)}).
\]
One can check that $\psi$ is increasing and thus $-\beta\lambda \|p\|_{L^{\infty}(0,1)}=\psi(0) \le \psi (x) \le  \psi(1)= (1-\beta) \lambda\|p\|_{L^{\infty}(0,1)}$ for all $x \in [0,1]$.

Then, the following estimate holds.
\begin{theorem}\label{thm_Carl_bound}
Assume Hypothesis \ref{Hypoth_2_nondiv}. Then, there exist two positive constants $C$ and $s_0$, such that, every solutions $v$ of \eqref{adjoint_problem} in $\mathcal{Z}$ satisfies
\begin{align}\label{Carl_estimate_bound}
\int\!\!\!\!\!\int_{Q} \Big(s \theta   v_{x}^{2} &+ s^{3} \theta^3 \left(\frac{x}{ a}\right)^2 v^{2}\Big) e^{2s\phi}\,dx\,dt \notag \\ &\leq C \int\!\!\!\!\!\int_{Q} \frac{g^2}{a} e^{2s\phi}\,dx\,dt  + 2s C \int_0^T \theta(t) [x v_x^2 e^{2s\phi} ](t,1) dt,
\end{align}
for all $s \geq s_0$.
\end{theorem}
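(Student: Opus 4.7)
My strategy is the standard conjugation-and-self-adjoint-splitting method of Fursikov--Imanuvilov, adapted to degenerate operators as in \cite{CFR2008}, carried out in the weighted space $L^2_{\frac{1}{a}}$ suggested by the right-hand side $\int\!\!\!\!\!\int_{Q} g^2/a\,dx\,dt$. I would begin by setting $z(t,x):=e^{s\phi(t,x)}v(t,x)$. Since $\theta(t)\to+\infty$ as $t\to 0^+,T^-$ and $\phi<0$, we have $z(0,\cdot)=z(T,\cdot)=0$ thanks to the regularity $v\in\mathcal{Z}$ provided by Proposition \ref{prop-Well-posed_nonhom_nondiv}; together with $z(t,0)=z(t,1)=0$ this makes all time- and space-boundary contributions tractable. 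Rewriting the equation in terms of $z$ gives
\[
L^+_s z + L^-_s z = e^{s\phi}g,
\]
where I would decompose the conjugated operator into its formally $L^2_{\frac{1}{a}}$-self-adjoint and anti-self-adjoint pieces, e.g.
\[
L^+_s z:= a z_{xx} + s^2 a \phi_x^2 z - s\phi_t z,\qquad L^-_s z := z_t -2s a\phi_x z_x - s a\phi_{xx} z,
\]
and take the $L^2_{\frac{1}{a}}(Q)$-norm of both sides to obtain the fundamental identity $\|L^+_sz\|^2_{L^2_{\frac{1}{a}}(Q)}+\|L^-_sz\|^2_{L^2_{\frac{1}{a}}(Q)}+2\langle L^+_sz,L^-_sz\rangle_{L^2_{\frac{1}{a}}(Q)}=\|e^{s\phi}g\|^2_{L^2_{\frac{1}{a}}(Q)}$ as the starting point.

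The heart of the argument is the computation of the cross term by integration by parts in both $x$ and $t$. Each of the resulting products yields a dominant positive contribution, a lower-order perturbation, and various boundary pieces. The choice $p'(x)=xe^{x^2}/a(x)$ is made precisely so that the algebraic identity $a\phi_x^2=\lambda^2\theta^2 x^2 e^{2x^2}/a$ converts the leading term into exactly the weight appearing on the left of \eqref{Carl_estimate_bound}, producing two positive integrals comparable to $s^3\!\int\!\!\!\!\!\int_{Q}\theta^3(x/a)^2 z^2\,dx\,dt$ and $s\!\int\!\!\!\!\!\int_{Q}\theta\, z_x^2\,dx\,dt$. The remaining error terms, which carry factors of $\rho(x):=xa'(x)/a(x)$ or $\rho_{xx}$, are controlled precisely by Hypothesis \ref{Hypoth_2_nondiv}.1 (ensuring $\alpha<2$, hence a strictly positive coefficient in front of $s\theta z_x^2$) and by Hypothesis \ref{Hypoth_2_nondiv}.2 (bounding $\rho_{xx}\le c/a$), combined with the Hardy--Poincar\'e inequality of Proposition \ref{Prop_Hardy_nondiv} to trade $\int z^2/a$ against $\int z_x^2$. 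Temporal boundary terms vanish by the endpoint behavior of $z$; spatial ones at $x=0$ vanish thanks to the combination of $a(0)=0$, $ap'(0)=0$, and $z(t,0)=0$; and at $x=1$ only the term $2s\int_0^T\theta(t)[x z_x^2](t,1)\,dt$ survives.

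I would then fix $\lambda$ large enough to make the positive leading coefficients dominate their perturbations, and subsequently take $s\ge s_0$ large enough to absorb all remaining lower-order contributions, thereby establishing the analogue of \eqref{Carl_estimate_bound} for $z$. Undoing the substitution via $z=e^{s\phi}v$ is straightforward: $z^2=e^{2s\phi}v^2$, while $z_x^2\le 2e^{2s\phi}v_x^2+2s^2\phi_x^2 e^{2s\phi}v^2$, and the extra piece, of order $s^2\theta^2(x/a)^2 e^{2s\phi}v^2$, is absorbed for $s$ large by the $s^3\theta^3(x/a)^2$ term on the left-hand side; since $v(t,1)=0$, we have $z_x(t,1)=e^{s\phi(t,1)}v_x(t,1)$, producing exactly the surviving boundary contribution $2sC\int_0^T\theta(t)[xv_x^2 e^{2s\phi}](t,1)\,dt$. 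The main technical obstacle I anticipate is the bookkeeping of the cross-term expansion combined with a rigorous justification that every boundary limit at $x=0^+$ vanishes despite the degeneracy; this has to be argued through an approximation scheme that first carries out all integrations by parts on $(\varepsilon,1)$ and then passes to the limit $\varepsilon\to 0^+$, using Hardy-type controls and the regularity $v\in\mathcal{Z}$.
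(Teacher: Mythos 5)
Your strategy coincides with the paper's own proof (given in the Appendix): the same conjugation $w=e^{s\phi}v$, the identical splitting into $L^+_s w= aw_{xx}-s\phi_t w+s^2a\phi_x^2w$ and $L^-_s w= w_t-2sa\phi_xw_x-sa\phi_{xx}w$, the same fundamental identity in $L^2_{\frac{1}{a}}(Q)$, the reduction of all boundary terms to the single surviving contribution at $x=1$ (which the paper imports from \cite[Lemmas 3.8 and 3.9]{CFR2008}), the use of Hypothesis \ref{Hypoth_2_nondiv}.1 to get the coefficient $2-\alpha>0$ and of Hypothesis \ref{Hypoth_2_nondiv}.2 to bound $\rho_{xx}$, and absorption of the remainders for $s$ large. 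So the architecture is right, and the final step of undoing the substitution is handled as in the paper.

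There is, however, one step that fails as literally written. After bounding $|\ddot\theta|\le C_T\theta^2$ and $\theta\le C_T\theta^2$, the lower-order remainder takes the form $s\int\!\!\!\int_{Q}\theta^2\,\frac{w^2}{a}\,dx\,dt$. Applying the Hardy--Poincar\'e inequality \eqref{hardyineq_nondiv} directly, as you propose (``trade $\int z^2/a$ against $\int z_x^2$''), produces $s\int\!\!\!\int_{Q}\theta^2 w_x^2\,dx\,dt$, which cannot be absorbed by the positive term $s\int\!\!\!\int_{Q}\theta\, w_x^2\,dx\,dt$: the mismatch is in the time weight ($\theta^2/\theta=\theta$ is unbounded near $t=0,T$), so no choice of $s$ or of a small multiplicative constant closes the estimate. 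The paper resolves this by an interpolation in $x$ rather than a direct Hardy step: one writes $\frac{\theta^2}{a}w^2=\bigl(\frac{1}{\gamma}\theta^3(\frac{x}{a})^2w^2\bigr)^{1/2}\bigl(\gamma\frac{\theta}{x^2}w^2\bigr)^{1/2}$, applies Young's inequality, absorbs the first piece into $s^3\int\!\!\!\int_{Q}\theta^3(\frac{x}{a})^2w^2$ for $s$ large, and controls the second by the \emph{classical} Hardy inequality $\int_0^1 w^2/x^2\,dx\le C_H\int_0^1 w_x^2\,dx$ (not the degenerate one), which yields $\gamma C_H s\int\!\!\!\int_{Q}\theta\,w_x^2$ and is absorbed by taking $\gamma$ small. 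Your closing remark about ``Hardy-type controls'' gestures at this, but the specific trade you describe is the one place where the bookkeeping genuinely matters and where your argument, as stated, does not close. A second, minor point: $\lambda$ plays no absorption role here (every distributed term carries at least one factor of $\lambda$, and the dominant ones carry $\lambda$ and $\lambda^3$); the absorption is driven entirely by $s$ large and $\gamma$ small, so ``fix $\lambda$ large enough'' is not what is needed.
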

Note that, in \cite{CFR2008} Theorem \ref{thm_Carl_bound} is shown for $\lambda =1$, $\beta=2$, $\ds\theta(t)=\frac{1}{[t(T-t)]^4}$ and under a stronger assumption on $a$. However, by a simple adaptation of the proof, one can prove that the result remains true also in this case. For the reader's convenience we write the proof of Theorem \ref{thm_Carl_bound} in the Appendix.

Next, we introduce the following weight functions associated to the classical Carleman estimates:
$$\Phi(t,x):=\theta(t)\Psi(x)$$
where
\begin{equation}\label{weightfunc_nondeg}
\Psi(x) = e^{\rho \sigma} - e^{2 \rho \|\sigma\|_{\infty}}.
\end{equation}
Here $\rho>0$, $\sigma \in C^2([0,1])$ is such that $\sigma>0$ in $(0,1)$, $\sigma(0)=\sigma(1)=0$ and $\sigma_x (x)\neq 0$ in $[0,1]\setminus\tilde{\omega}$, being $\tilde{\omega}$ an arbitrary open subset of $\omega$.

By taking the parameter $\lambda$ such that
\begin{equation}\label{cond_rho}
\ds \lambda \geq \frac{e^{2 \rho \|\sigma\|_{\infty}} - 1 }{(\beta-1)\|p\|_{L^\infty(0,1)}},
\end{equation}
one can easily show that $\psi(x) \le \Psi(x)$ for all $x \in [0,1]$, and hence
\begin{equation}\label{compar_phi_eta}
\phi(t,x) \leq \Phi(t,x), \;\, \text{for all} \;(t,x) \in Q.
\end{equation}
We are going to derive from \eqref{Carl_estimate_bound} a Carleman estimate with locally distributed observation. To this aim, we need the following Caccioppoli's inequality (whose proof is postponed to the Appendix).
\begin{lemma}[Caccioppoli's inequality]\label{lemma_caccio}
Let $\omega_1$ and $\omega_2$ two open subintervals of $(0,1)$ such that $\omega_2 \Subset \omega_1 \Subset (0,1) .$ Let $\pi(t, x):=\theta(t) \eta(x),$ where $\theta$ is defined in \eqref{weightfunc_deg_nodiv} and $\eta \in C^{2}(0,1)$ is a strictly negative function. Then, there exist two strictly positive constants $C$ and $s_{0}$ such that, for all $s \geq s_{0}$,
\begin{equation}\label{inequality_caccio}
\begin{aligned}
&\int_{0}^{T}\!\!\!\int_{\omega_2} v_{x}^{2} e^{2 s \pi} dx dt \leq C\left(\int_{0}^{T}\!\!\!\int_{\omega_1} s^2 \theta^2 v^{2}e^{2 s \pi}\,dx\,dt + \int\!\!\!\!\!\int_{Q} g^{2} e^{2 s \pi}\,dx\,dt\right)\\
&\quad\leq C\left(\int_{0}^{T}\!\!\!\int_{\omega_1} s^2 \theta^2 \frac{v^{2}}{a}e^{2 s \pi}\,dx\,dt + \int\!\!\!\!\!\int_{Q} \frac{g^{2}}{a}e^{2s\pi}\,dx\,dt\right),
\end{aligned}
\end{equation}
for every solution $v$ of \eqref{adjoint_problem}.
\end{lemma}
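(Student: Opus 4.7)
The plan is the classical cutoff-plus-energy identity adapted to the degenerate setting. I would pick $\xi \in C^{\infty}_c(\omega_1)$ with $0 \le \xi \le 1$ and $\xi \equiv 1$ on $\omega_2$. Since $\eta < 0$ on $[0,1]$ and $\theta(t) \to +\infty$ as $t \to 0^+$ and $t \to T^-$, the weight $e^{2s\pi}$ vanishes at the temporal endpoints, so integrating $\frac{d}{dt}(\xi^2 e^{2s\pi} v^2)$ over $Q$ yields
\[
2\int\!\!\!\!\!\int_{Q} \xi^2 e^{2s\pi}\, v\, v_t \, dx\, dt = -2\int\!\!\!\!\!\int_{Q} s\pi_t\, \xi^2 e^{2s\pi} v^2 \, dx\, dt.
\]
Substituting the adjoint equation $v_t = -a v_{xx} + g$ from \eqref{adjoint_problem} and integrating by parts once in $x$ (no boundary contributions, since $\xi$ is compactly supported inside $(0,1)$) produces
\[
\int\!\!\!\!\!\int_{Q} a\xi^2 e^{2s\pi} v_x^2 \, dx\, dt = -\int\!\!\!\!\!\int_{Q} (a\xi^2 e^{2s\pi})_x\, v\, v_x \, dx\, dt - \int\!\!\!\!\!\int_{Q} \xi^2 e^{2s\pi} g v \, dx\, dt - \int\!\!\!\!\!\int_{Q} s\pi_t\, \xi^2 e^{2s\pi} v^2 \, dx\, dt.
\]

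The second step is to absorb the right hand side into the left together with lower-order $v^2$ and $g^2$ contributions via Young's inequality. On $\supp \xi \subset \omega_1 \Subset (0,1)$ the coefficients $a$ and $a_x$ are uniformly bounded and $a$ is bounded below by a positive constant; moreover $|\pi_x| = \theta|\eta'| \le C\theta$ and, from the explicit form of $\theta$, $|\pi_t| = |\theta'||\eta| \le C\theta^{3/2}$. Expanding $(a\xi^2 e^{2s\pi})_x$, the dominant piece $2sa\pi_x \xi^2 e^{2s\pi} v v_x$ is split as
\[
\bigl| 2sa\pi_x \xi^2 e^{2s\pi} v v_x \bigr| \le \tfrac{1}{2}\, a\xi^2 e^{2s\pi} v_x^2 + C\, s^2 \theta^2 \xi^2 e^{2s\pi} v^2,
\]
and the lower-order pieces in $(a\xi^2 e^{2s\pi})_x$, together with the $gv$ term treated by $2|gv| \le g^2 + v^2$, are handled analogously with $s$-independent constants. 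For $s \ge s_0$ sufficiently large, both $s|\pi_t|$ and the stray bare $v^2$ terms are dominated by $s^2\theta^2$ (using that $\theta \ge 16/T^4$ on $[0,T]$). Collecting everything leaves
\[
\int\!\!\!\!\!\int_{Q} a\xi^2 e^{2s\pi} v_x^2 \, dx\, dt \le C \int_0^T\!\!\!\int_{\omega_1} s^2 \theta^2 v^2 e^{2s\pi} \, dx\, dt + C \int\!\!\!\!\!\int_{Q} g^2 e^{2s\pi} \, dx\, dt.
\]

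Since $\xi \equiv 1$ on $\omega_2$ and $\inf_{\omega_2} a > 0$, the left hand side dominates $\int_0^T\!\int_{\omega_2} v_x^2 e^{2s\pi} \, dx\, dt$, giving the first bound in \eqref{inequality_caccio}. The second bound is immediate from $a \in L^{\infty}(0,1)$: one has $g^2 \le \|a\|_\infty\, g^2/a$ globally, and $v^2 \le (\max_{\overline{\omega_1}} a)\, v^2/a$ on $\omega_1$. The only genuinely delicate point is the calibration of $s_0$, which must be taken large enough simultaneously to absorb $s|\pi_t| \le C s\theta^{3/2}$ into $s^2\theta^2$, to convert the bare $v^2$ arising from $2|gv|$ into $s^2\theta^2 v^2$, and to dominate the $s$-independent pieces of $(a\xi^2 e^{2s\pi})_x$; each requirement is met for $s$ beyond an explicit threshold depending only on $T$, $\eta$, and $a|_{\omega_1}$.
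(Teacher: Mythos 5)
Your proof is correct and follows essentially the same route as the paper: a cutoff $\xi$ supported in $\omega_1$, the vanishing of $\int_0^T\frac{d}{dt}\int_0^1 \xi^2 e^{2s\pi}v^2\,dx\,dt$, substitution of the adjoint equation, integration by parts in $x$, and absorption via Young's inequality together with $\theta$ bounded below and $a$ bounded above and below on $\overline{\omega_1}$. The only (immaterial) difference is that you integrate by parts once and absorb the cross term $(a\xi^2 e^{2s\pi})_x\,v\,v_x$ by Young, whereas the paper integrates by parts twice so that all derivatives fall on the coefficient, landing directly on $(a\xi^2 e^{2s\pi})_{xx}\,v^2$.
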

With the aid of Theorem \ref{thm_Carl_bound} and Lemma \ref{lemma_caccio}, via suitable cut off functions, we can now prove the following $\omega$-local Carleman estimate for \eqref{adjoint_problem}.
\begin{theorem}\label{thm_Carl_local} Assume Hypothesis \ref{Hypoth_2_nondiv}.
There exist two positive constants $C$ and $s_0$, such that, every solutions $v$ of \eqref{adjoint_problem} in $\mathcal{Z}$ satisfies
\begin{align}\label{Carl_estimate_local}
\int\!\!\!\!\!\int_{Q} \Big(s \theta   v_{x}^{2} &+ s^{3} \theta^3 \left(\frac{x}{ a}\right)^2 v^{2}\Big) e^{2s\phi}\,dx\,dt \notag \\
&\leq C \Big(\int\!\!\!\!\!\int_{Q} \frac{g^2}{a} e^{2s\Phi}\,dx\,dt + \int\!\!\!\!\!\int_{Q_{\omega}} s^3 \theta^3 \frac{v^2}{a} e^{2s\Phi} \,dx dt\Big),
\end{align}
for all $s \geq s_0$.
\end{theorem}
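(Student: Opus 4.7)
The plan is to start from the boundary-term estimate \eqref{Carl_estimate_bound} given by Theorem \ref{thm_Carl_bound} and to absorb the surface integral at $x=1$ into a locally distributed observation in $\omega$. The underlying idea is that on any subinterval $[\alpha_1, 1]$ with $\alpha_1 > 0$ the coefficient $a$ is bounded below by a positive constant, so the equation is uniformly parabolic there and a classical Fursikov--Imanuvilov Carleman inequality with the non-degenerate weight $\Phi$ is available.

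Concretely, I fix $\alpha_1 < \alpha_2$ with $[\alpha_1,\alpha_2] \Subset \omega$ and a cutoff $\xi \in C^\infty([0,1])$ satisfying $\xi \equiv 0$ on $[0,\alpha_1]$, $\xi \equiv 1$ on $[\alpha_2,1]$, with $\supp \xi_x \subset (\alpha_1,\alpha_2)$. Then $w := \xi v$ solves
\[
w_t + a w_{xx} = \xi g + 2 a \xi_x v_x + a \xi_{xx} v, \qquad w(t,\alpha_1) = w(t,1) = 0,
\]
on $(0,T) \times (\alpha_1, 1)$, a uniformly parabolic problem. Applying the classical Carleman estimate to $w$ on $(\alpha_1,1)$ with $\sigma$ chosen so that $\sigma_x(1) < 0$, the boundary contribution at $x=1$ appears with the favourable sign on the left-hand side, producing the schematic inequality
\[
\int_0^T\!\!\!\int_{\alpha_1}^1 \bigl(s\theta w_x^2 + s^3\theta^3 w^2\bigr) e^{2s\Phi}\,dx\,dt + s\int_0^T \theta(t) w_x^2(t,1) e^{2s\Phi(t,1)}\,dt \le C\int\!\!\!\!\!\int_Q \bigl|\xi g + 2 a \xi_x v_x + a \xi_{xx} v\bigr|^2 e^{2s\Phi} + C\int_0^T\!\!\!\int_{\tilde\omega} s^3\theta^3 w^2 e^{2s\Phi}.
\]

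From here, using $\phi \le \Phi$ and $w_x(t,1) = v_x(t,1)$ (because $\xi \equiv 1$ near $x=1$), the boundary term in \eqref{Carl_estimate_bound} is dominated by the $x=1$ integral above and can be absorbed. The cutoff-generated contributions $2 a \xi_x v_x + a \xi_{xx} v$ are supported in $[\alpha_1,\alpha_2]\subset\omega$, where $1/a$ is bounded: the resulting $v_x^2$ integral is controlled via Caccioppoli's inequality (Lemma \ref{lemma_caccio}), and the $v^2$ integral is absorbed into $\int\!\!\!\!\!\int_{Q_\omega} s^3\theta^3 (v^2/a) e^{2s\Phi}$, using the uniform lower bound of $s^3\theta^3/a$ on the support of the cutoff for $s$ large enough. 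Finally, the source $|\xi g|^2 e^{2s\Phi}$ is majorised by $\|a\|_\infty \int\!\!\!\!\!\int_Q (g^2/a) e^{2s\Phi}$, producing exactly the form appearing in \eqref{Carl_estimate_local}.

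The main obstacle is the joint choice of $\sigma$ (hence of $\Phi$) and of the parameter $\lambda$ in \eqref{phi}: one must simultaneously ensure that (i) $\sigma_x(1) < 0$ so that the boundary term of the classical Carleman step carries the correct sign and hence can dominate the boundary term of \eqref{Carl_estimate_bound}, (ii) condition \eqref{cond_rho} holds so that $\phi \le \Phi$ and the two estimates can be coupled, and (iii) the classical observation region $\tilde\omega$ lies inside $\omega$, which is possible thanks to the freedom in $\sigma$. Once the weights are aligned, what remains is routine bookkeeping of the cutoff terms together with the direct application of Caccioppoli's inequality.
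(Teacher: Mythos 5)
Your argument is correct in outline, but it takes a genuinely different route from the paper's. The paper splits $v=\chi v+(1-\chi)v$ with $\chi\equiv 1$ near the degenerate endpoint $x=0$ and $\chi\equiv 0$ near $x=1$: the degenerate estimate \eqref{Carl_estimate_bound} is applied to $\chi v$, whose derivative vanishes near $x=1$, so the boundary term at $x=1$ simply disappears; the classical Fursikov--Imanuvilov estimate in its standard distributed-observation form \cite[Lemma 1.2]{FI1996} is then applied to $(1-\chi)v$ on a nondegenerate subinterval, and the two halves are recombined through $v^2\le 2(w^2+z^2)$. You instead apply \eqref{Carl_estimate_bound} to the whole of $v$, keep the boundary term $2sC\int_0^T\theta\,[xv_x^2e^{2s\phi}](t,1)\,dt$, and absorb it using a classical Carleman estimate for $\xi v$ on $(\alpha_1,1)$ that retains the observation $s\int_0^T\theta\, w_x^2(t,1)e^{2s\Phi(t,1)}\,dt$ on the left-hand side. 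This is legitimate: since $\sigma(1)=0$, $\sigma>0$ in $(0,1)$ and $\sigma_x\neq 0$ outside $\tilde\omega\Subset(0,1)$, one has $\Psi_x(1)=\rho\sigma_x(1)e^{\rho\sigma(1)}<0$, so the boundary contribution at $x=1$ indeed carries the favourable sign, while \eqref{cond_rho} gives $\phi(t,1)\le\Phi(t,1)$ and $w_x(t,1)=v_x(t,1)$, so the domination you claim holds after multiplying the classical inequality by a fixed constant. What your route buys is that the left-hand side of \eqref{Carl_estimate_local} is obtained in one stroke, with a single cut-off instead of a partition of unity; what it costs is that the classical Carleman inequality must be invoked (or rederived) in its version with explicit boundary observation at $x=1$, which is not the statement of \cite[Lemma 1.2]{FI1996} used in the paper --- you should either cite a reference recording that boundary term (such a lemma appears in \cite{Alabau2006}) or carry out the sign computation for the boundary contributions. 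The remaining bookkeeping (Caccioppoli for the commutator terms supported in $[\alpha_1,\alpha_2]\subset\omega$, the positive lower bounds of $a$ and of $s^3\theta^3$ there, and $|\xi g|^2\le\|a\|_{L^\infty(0,1)}\,g^2/a$) matches the paper's treatment and is sound.
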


\begin{proof}
Let us set
$\omega''= (\alpha'', \beta'') \Subset \omega'=(\alpha', \beta') \Subset \omega$
and consider a smooth cut-off function $\chi:[0,1] \rightarrow \mathbb{R}$ such that
$$
\left\{\begin{array}{ll}
0 \leq \chi(x) \leq 1, & \text { for all } x \in[0,1], \\
\chi(x)=1, & x \in (0,\alpha''), \\
\chi(x)=0, & x \in(\beta'',1).
\end{array}\right.
$$
We define $w:= \chi v$, where $v$ solves \eqref{adjoint_problem}. Then $w$ satisfies
\begin{equation}\label{equationchi}
\left\{
\begin{array}{lll}
w_t + aw_{xx} = G, & & (t,x)\in Q,\\
w(t, 1)=w(t, 0) = 0, & & t \in (0, T),\\
w(T,x)= \chi(x) v_{T}(x), & &  x \in  (0,1),
\end{array}
\right.
\end{equation}
where $G:= \chi g + a(\chi_{xx}v + 2\chi_{x} v_x)$.

Applying the Carleman inequality \ref{thm_Carl_bound} and using the fact that $w_x=0$ in the neighborhood of $x=1$, one has
\begin{align}\label{est-1-proof}
\int\!\!\!\!\!\int_{Q} \Big(s \theta   w_{x}^{2} &+ s^{3} \theta^3 \left(\frac{x}{ a}\right)^2 w^{2}\Big) e^{2s\phi}\,dx\,dt \leq C \int\!\!\!\!\!\int_{Q} \frac{G^2}{a} e^{2s\phi}\,dx\,dt.
\end{align}
Using the definition of $\chi$, in particular, the fact that  $\supp \chi_x, \chi_{xx} \subset \omega''$, we obtain
\begin{align*}
\int\!\!\!\!\!\int_{Q} \frac{G^2}{a} e^{2s\phi}\,dx\,dt & \leq 2 \int\!\!\!\!\!\int_{Q} a (\chi_{xx}v + 2\chi_{x} v_x)^2 e^{2s\phi}\,dx\,dt + 2 \int\!\!\!\!\!\int_{Q} \chi^2 \frac{g^2}{a} e^{2s\phi}\,dx\,dt\notag\\
& \leq C\Big( \int\!\!\!\!\!\int_{Q_{\omega''}} (v^2 + v_x^2) e^{2s\phi}\,dx\,dt + \int\!\!\!\!\!\int_{Q} \frac{g^2}{a} e^{2s\phi}\,dx\,dt\Big).
\end{align*}
Combining this last inequality and \eqref{est-1-proof}, via the Caccioppoli inequality \eqref{inequality_caccio} and using \eqref{compar_phi_eta}, it follows that
\begin{align}\label{est-2-proof}
\int\!\!\!\!\!\int_{Q} \Big(s \theta   w_{x}^{2} &+ s^{3} \theta^3 \left(\frac{x}{ a}\right)^2 w^{2}\Big) e^{2s\phi}\,dx\,dt \notag\\
& \leq C \Big(\int\!\!\!\!\!\int_{Q} \frac{g^2}{a} e^{2s\phi}\,dx\,dt + \int\!\!\!\!\!\int_{Q_{\omega'}}  s^2 \theta^2 \frac{v^{2}}{a}  e^{2s\phi}\,dx\,dt \Big)\notag\\
& \leq C \Big(\int\!\!\!\!\!\int_{Q} \frac{g^2}{a} e^{2s\Phi}\,dx\,dt + \int\!\!\!\!\!\int_{Q_{\omega'}}  s^3 \theta^3 \frac{v^{2}}{a}  e^{2s\Phi}\,dx\,dt\Big),
\end{align}
for $s$ large enough.

Now, consider $ z:= (1 - \chi) v$ and take $\tilde{\alpha}\in (0,\bar{\alpha})$. Then, $z$ satisfies
\begin{equation}\label{equationzeta}
\left\{
\begin{array}{lll}
z_t + az_{xx}= \tilde G, & & (t,x)\in (0,T) \times (\tilde{\alpha},1):=\tilde{Q},\\
z(t, 1)= z(t, \tilde{\alpha}) = 0, & & t \in (0, T),\\
z(T,x)= (1-\chi(x)) v_{T}(x), & &  x \in  (\tilde{\alpha},1),
\end{array}
\right.
\end{equation}
where $ \tilde G :=(1-\chi) g - a(\chi_{xx}v - 2\chi_{x} v_x)$.

Clearly, system \eqref{equationzeta} is a non degenerate problem; hence, by the classical Carleman estimate \cite[Lemma 1.2]{FI1996}, one has
\begin{equation}\label{est-3-proof}
\int\!\!\!\!\!\int_{\tilde{Q}} \Big( s\theta z_{x}^{2} + s^{3}\theta^3 z^{2}\Big) e^{2s\Phi}\,dx\,dt
\leq C\Big(\int\!\!\!\!\!\int_{\tilde{Q}} \tilde G^2 e^{2s\Phi}\,dx\,dt+  \int\!\!\!\!\!\int_{Q_{\omega'}}  s^3 \theta^3 v^{2}  e^{2s\Phi}\,dx\,dt\Big).
\end{equation}
Using once again the fact that $\supp \chi_x, \chi_{xx} \subset \omega''$ and applying Caccioppoli's inequality, we obtain
\begin{align}\label{2est-3-proof}
\int\!\!\!\!\!\int_{\tilde{Q}} \tilde G^2 e^{2s\Phi}\,dx\,dt
&\leq C\Big( \int\!\!\!\!\!\int_{Q_{\omega''}} (v^2 + v_x^{2}) e^{2s\Phi}\,dx\,dt + \int\!\!\!\!\!\int_{\tilde{Q}} g^2 e^{2s\Phi}\,dx\,dt\Big)\notag\\
& \leq C\Big( \int\!\!\!\!\!\int_{Q_{\omega'}} s^2 \theta^2 \frac{v^{2}}{a}  e^{2s\Phi}\,dx\,dt + \int\!\!\!\!\!\int_{Q} \frac{g^2}{a} e^{2s\Phi}\,dx\,dt\Big).
\end{align}
In addition, via \eqref{compar_phi_eta} and by the definition of $z$, we have
\begin{align*}
\int\!\!\!\!\!\int_{Q} \Big( s\theta z_{x}^{2} + s^{3}\theta^3 \left(\frac{x}{a}\right)^{2} z^2 \Big) e^{2s\phi}\,dx\,dt
& \leq \int\!\!\!\!\!\int_{Q} \Big( s\theta z_{x}^{2} + s^{3}\theta^3 \left(\frac{x}{a}\right)^{2} z^2 \Big) e^{2s\Phi}\,dx\,dt\notag\\
&= \int\!\!\!\!\!\int_{\tilde{Q}} \Big( s\theta z_{x}^{2} + s^{3}\theta^3 \left(\frac{x}{a}\right)^{2} z^2\Big) e^{2s\Phi}\,dx\,dt \notag\\
&\leq C \int\!\!\!\!\!\int_{\tilde{Q}} \Big( s\theta z_{x}^{2} + s^{3}\theta^3 z^2 \Big) e^{2s\Phi}\,dx\,dt.
\end{align*}
Using \eqref{est-3-proof} and \eqref{2est-3-proof}, from the previous inequality, we find
\begin{align}\label{est-4-proof}
\int\!\!\!\!\!\int_{Q} \Big( s\theta z_{x}^{2} &+ s^{3}\theta^3 \left(\frac{x}{a}\right)^{2} z^2 \Big) e^{2s\phi}\,dx\,dt\notag \\
&\leq C\Big(\int\!\!\!\!\!\int_{Q} \frac{g^2}{a} e^{2s\Phi}\,dx\,dt + \int\!\!\!\!\!\int_{Q_{\omega'}}  s^3 \theta^3 \frac{v^{2}}{a} e^{2s\Phi}\,dx\,dt\Big),
\end{align}
for $s$ large enough.

Finally, observing that
\begin{equation*}
v^2=(w+z)^2\leq 2( w^2 + z^2)\qquad \text{and} \qquad v_x^2=(w_x+z_x)^2\leq 2(w_x^2 + z_x^2),
\end{equation*}
and combining \eqref{est-2-proof} and \eqref{est-4-proof}, one can easily deduce the desired estimate \eqref{Carl_estimate_local}.
\end{proof}

Now, we proceed to derive from \eqref{Carl_estimate_local} a new Carleman inequality with weight functions which do not blow up at $t=0$. This estimate will be the key tool to show that the null controllability for the nonhomogeneous problem \eqref{nonhom_problem} holds, imposing an exponential decay at the final time $t=T$ on the source term $f$. To this end, we define the following weight functions:
\begin{equation}\label{nu}
\nu(t)=  \left\{
\begin{array}{ll}
\theta(\frac{T}{2}), & t \in \left[0,\ds\frac{T}{2}\right], \\
\theta(t), & t \in \left[\ds\frac{T}{2},T\right],
\end{array}
\right.
\end{equation}

\begin{equation}\label{nwieght}
\begin{aligned}
&\tilde{\phi}(t,x):= \nu(t)\psi(x),\quad
\tilde{\Phi}(t,x):=\nu(t) \Psi(x),\quad \hat{\Phi}(t):=\displaystyle\max_{x\in[0,1]}\tilde{\Phi}(t,x), \\
&\hat{\phi}(t):=\displaystyle\max_{x\in[0,1]}\tilde{\phi}(t,x)\quad \text{and}\quad \check{\phi}(t):=\displaystyle\min_{x\in[0,1]}\tilde{\phi}(t,x).
\end{aligned}
\end{equation}

Then, the following modified Carleman estimate holds.

\begin{lemma}\label{lemma_modifiedcarl_nondiv} Assume Hypothesis \ref{Hypoth_2_nondiv} and let $T^*\in \left(\ds\frac{T}{2},T\right)$. Then, there exist two positive constants $C$ and $s_0$ such that every solution $v$ of \eqref{adjoint_problem} in $\mathcal{Z}$ satisfies, for all $s \geq s_0$
\begin{align}\label{modcarl_nondiv}
&\|e^{s\hat{\phi}(0)}v(0)\|_{L_{\frac{1}{a}}^2(0,1)}^2 +  \int\!\!\!\!\!\int_{Q} \frac{v^{2}}{a} e^{2s\tilde{\phi}}\,dx\,dt  \notag\\
&\leq C e^{2s[\hat{\phi}(0)-\check{\phi}(T^*)]} \Big(\int\!\!\!\!\!\int_{Q} \frac{g^2}{a} e^{2s\tilde{\Phi}}\,dx\,dt +
\int\!\!\!\!\!\int_{Q_{\omega}}  s^{3}\nu^3 \frac{v^{2}}{a} e^{2s\tilde{\Phi}}\,dx\,dt\Big).
\end{align}
\end{lemma}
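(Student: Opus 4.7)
The strategy is to split the time interval into $[0,T/2]$ and $[T/2,T]$: on the second interval the modified weights $\tilde\phi,\tilde\Phi,\nu$ coincide with $\phi,\Phi,\theta$, so Theorem \ref{thm_Carl_local} applies directly; on the first interval they are constant in $t$ and uniformly bounded, and the desired bound is recovered via a Gronwall-type energy argument that transfers information from the middle strip $[T/2,T^*]$ back to $t=0$.

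First I would derive an energy identity for the adjoint equation. Multiplying $v_t+av_{xx}=g$ by $v/a$ and integrating by parts (using $v|_{x=0,1}=0$) gives $\tfrac12\tfrac{d}{dt}\|v\|^2_{L^2_{1/a}(0,1)}=\|v_x\|^2_{L^2(0,1)}+(g,v)_{L^2_{1/a}(0,1)}$, so Young's inequality and Gronwall yield $\|v(s)\|^2_{L^2_{1/a}}\leq C\bigl(\|v(t)\|^2_{L^2_{1/a}}+\|g\|^2_{L^2_{1/a}((0,T^*)\times(0,1))}\bigr)$ for all $0\leq s\leq t\leq T^*$. Averaging $t\in[T/2,T^*]$ and $s\in[0,T/2]$, together with Proposition \ref{Prop_Hardy_nondiv} and Theorem \ref{thm_Carl_local}—exploiting that $\phi(t,x)=\theta(t)\psi(x)\geq\theta(T^*)\psi(0)=\check\phi(T^*)$ and $\theta(t)\geq\theta(T/2)$ on $[T/2,T^*]\times[0,1]$, hence $\int_{T/2}^{T^*}\|v\|^2_{L^2_{1/a}}dt\leq\tfrac{Ce^{-2s\check\phi(T^*)}}{s\theta(T/2)}\int\!\!\!\!\!\int_Q s\theta v_x^2 e^{2s\phi}dxdt\leq Ce^{-2s\check\phi(T^*)}R_0$—one obtains
\[
\|v(0)\|^2_{L^2_{1/a}(0,1)}+\int_0^{T/2}\|v\|^2_{L^2_{1/a}(0,1)}ds\leq C\bigl(e^{-2s\check\phi(T^*)}R_0+\|g\|^2_{L^2_{1/a}((0,T^*)\times(0,1))}\bigr),
\]
where $R_0$ denotes the RHS of Theorem \ref{thm_Carl_local}.

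The next step is a bookkeeping comparison of $R_0$ and $\|g\|^2_{L^2_{1/a}((0,T^*)\times(0,1))}$ with the RHS $\tilde R$ of the lemma. The inequality $\Phi\leq\tilde\Phi$ immediately dominates the source integral of $R_0$ by that of $\tilde R$. For the observation integral the two integrands agree on $[T/2,T]\times\omega$; on $[0,T/2]\times\omega$ their ratio equals $(\theta/\theta(T/2))^3\exp\bigl(2s(\theta-\theta(T/2))\Psi\bigr)$, which the one-variable calculus bound $\sup_{u\geq 1}u^3 e^{-2sA(u-1)}=1$ for $s\geq 3/(2A)$—with $u=\theta/\theta(T/2)$ and $A=-\theta(T/2)\max_{\bar\omega}\Psi>0$—shows to be $\leq 1$ for $s$ large. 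Hence $R_0\leq \tilde R$ for $s\geq s_0$. Likewise, on $(0,T^*)\times(0,1)$ one has $\tilde\Phi(t,x)=\nu(t)\Psi(x)\geq\theta(T^*)\Psi_{\min}$, and assumption \eqref{cond_rho} (equivalently $e^{2\rho\|\sigma\|_\infty}-1\leq(\beta-1)\lambda\|p\|_\infty<\beta\lambda\|p\|_\infty$) yields $\theta(T^*)\Psi_{\min}\geq\check\phi(T^*)$; consequently $\|g\|^2_{L^2_{1/a}((0,T^*)\times(0,1))}\leq e^{-2s\check\phi(T^*)}\int\!\!\!\!\!\int\frac{g^2}{a}e^{2s\tilde\Phi}dxdt\leq e^{-2s\check\phi(T^*)}\tilde R$. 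Plugging both into the previous display gives $\|v(0)\|^2_{L^2_{1/a}}+\int_0^{T/2}\|v\|^2_{L^2_{1/a}}ds\leq Ce^{-2s\check\phi(T^*)}\tilde R$.

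For the assembly, since $\psi$ is increasing and $\tilde\phi(t,x)=\theta(T/2)\psi(x)$ on $[0,T/2]$, one has $\tilde\phi\leq\theta(T/2)\psi(1)=\hat\phi(0)$ there; multiplying the last display by $e^{2s\hat\phi(0)}$ produces the $\|e^{s\hat\phi(0)}v(0)\|^2_{L^2_{1/a}}$ term and controls $\int_0^{T/2}\!\!\int_0^1\frac{v^2}{a}e^{2s\tilde\phi}dxdt\leq e^{2s\hat\phi(0)}\int_0^{T/2}\|v\|^2_{L^2_{1/a}}ds$, both bounded by $Ce^{2s[\hat\phi(0)-\check\phi(T^*)]}\tilde R$. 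For the complementary $[T/2,T]$-portion, where $\tilde\phi=\phi$, applying Hardy-Poincar\'e pointwise in $t$ to $u:=ve^{s\phi}\in H_0^1(0,1)$, using $\phi_x^2\leq C\theta^2(x/a)^2$, and invoking Theorem \ref{thm_Carl_local} once more yields $\int_{T/2}^T\!\!\int_0^1\frac{v^2}{a}e^{2s\phi}dxdt\leq C\int\!\!\!\!\!\int_Q(s\theta v_x^2+s^3\theta^3(x/a)^2 v^2)e^{2s\phi}dxdt\leq CR_0\leq C\tilde R$. Summing all contributions yields the claim. The main obstacle is the observation-term comparison $R_0\leq C\tilde R$ on $[0,T/2]\times\omega$: balancing the larger polynomial prefactor $s^3\theta^3$ against the smaller exponential $e^{2s\Phi}$ (relative to $s^3\nu^3 e^{2s\tilde\Phi}$) is delicate and requires $s$ beyond a quantitative threshold determined by $\max_{\bar\omega}\Psi<0$; everything else reduces to routine Gronwall and weighted book-keeping once Theorem \ref{thm_Carl_local} is in hand.
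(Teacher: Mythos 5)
Your proposal is correct and follows essentially the same strategy as the paper: transfer the Carleman information from the strip $[T/2,T^*]$ back to $[0,T/2]$ by a parabolic energy argument combined with the Hardy--Poincar\'e inequality, treat $[T/2,T]$ (where $\tilde\phi=\phi$, $\nu=\theta$) directly via Theorem \ref{thm_Carl_local}, and conclude with the weight comparison $s^3\theta^3e^{2s\Phi}\le s^3\nu^3e^{2s\tilde\Phi}$. The only cosmetic difference is that the paper implements the transfer step with a time cut-off $w=\xi e^{s\hat\phi(0)}v$ and the energy estimate of Proposition \ref{prop-Well-posed_nonhom_nondiv}, whereas you run a direct Gronwall differential inequality on $\|v(t)\|^2_{L^2_{1/a}}$; both yield the same bounds.
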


\begin{proof}
Let $ \xi \in C^{\infty}([0, T])$ be a cut-off function such that
\begin{align}\label{Xi}
0\leq \xi \leq 1, \qquad \xi(t):=
\left\{
\begin{array}{lll}
1,   & \text{for} \; t\in\Big[0,\dfrac{T}{2}\Big],\\
0 , & \text{for} \; t\in \Big[T^*, T\Big].
\end{array}
\right.
\end{align}
We define $ w = \tilde{\xi} v$ ,  where $ \tilde{\xi} =  \xi  e^{s\hat{\phi}(0)} $ and $v$ solves \eqref{adjoint_problem}. Then $w$ satisfies
\begin{equation}\label{adjoint-w}
\left\{
\begin{array}{lll}
\displaystyle  w_t + a w_{xx} = \tilde g,  &  & (t, x) \in Q, \\
 w(t, 0) = w(t,1)=0, & & t \in (0, T),\\
w(T,x)= 0,  & & x \in  (0, 1),
\end{array}
\right.
\end{equation}
where $ \tilde g: = \tilde{\xi}_t v + \tilde{\xi} g.$

By the energy estimate \eqref{energy-nonhom1} applied to the above system, one can see that
\begin{equation*}
\sup_{t \in [0,T]} \|w(t)\|^2_{L_{\frac{1}{a}}^2(0,1)} \leq C \int\!\!\!\!\!\int_{Q}\frac{\tilde g^2}{a}\,dxdt.
\end{equation*}
Hence, there exists a
positive constant $C$ such that
\begin{equation}\label{energy-w}
\begin{aligned}
\int_0^1\frac{w^2(0)}{a}  \,dx  +\int\!\!\!\!\!\int_{Q}\frac{w^2}{a}\,dxdt &\leq C \int\!\!\!\!\!\int_{Q}\frac{\tilde g^2}{a}\,dxdt \\
&\leq C \int\!\!\!\!\!\int_{Q} \frac{1}{a}(\tilde{\xi}_t v + \tilde{\xi} g)^2  \,dx\,dt.
\end{aligned}
\end{equation}
From the definition of $\xi$ and the fact that $ \tilde\phi \leq \hat\phi(0) \; \text{in} \; Q$, one has
\begin{equation}\label{left-enrgy-1}
\int_0^1\frac{w^2(0)}{a}  \,dx =  \int_0^1\frac{v^2(0)}{a} e^{2s\hat\phi(0)}  \,dx
\end{equation}
and
\begin{equation}\label{left-enrgy-w-2}
\begin{aligned}
\int\!\!\!\!\!\int_{Q}\frac{w^2}{a}  \,dxdt   &=   \int_{0}^{T^*}\!\!\!\!\int_0^1\xi^2  e^{2s\hat{\phi}(0)} \frac{v^2}{a} dxdt \\
&\geq \int_{0}^{\frac{T}{2}}\!\!\!\!\int_0^1 \frac{v^2}{a} e^{2s\tilde\phi}  \,dxdt.
\end{aligned}
\end{equation}
Moreover, recalling that $  \xi\equiv 0$ in  $[T^*,T]$ and $ \supp \xi_t \subset [T/2, T^*]$, the second member in \eqref{energy-w} can be estimated as follows
\begin{align}\label{right-enrgy-w}
\int\!\!\!\!\!\int_{Q} \frac{1}{a} (\tilde{\xi}_t v + \tilde{\xi} g)^2  \,dx\,dt
& \leq C \Big( \int\!\!\!\!\!\int_{Q}   \xi_t^2  e^{2s\hat\phi(0)} \frac{v^2}{a}  \,dx\,dt + \int\!\!\!\!\!\int_{Q}   \xi^2 e^{2s\hat\phi(0)} \frac{g^2}{a}  \,dx\,dt \Big)\notag\\
& \leq C e^{2s\hat\phi(0)} \Big(\int_{\frac{T}{2}}^{T^*} \!\!\!\!\int_0^1   \frac{v^2}{a}  \,dx\,dt + \int_{0}^{T^*}\!\!\!\!\int_0^1  \frac{g^2}{a} \,dx\,dt \Big) \notag\\
&(\text{by \eqref{hardyineq_nondiv}})\notag\\
& \leq C e^{2s\hat\phi(0)} \Big( \int_{\frac{T}{2}}^{T^*} \!\!\!\!\int_0^1  v_x^2  \,dx\,dt + \int_{0}^{T^*}\!\!\!\!\int_0^1   \frac{g^2}{a} \,dx\,dt\Big).
\end{align}
Hence, from \eqref{energy-w}-\eqref{right-enrgy-w}, we get that
\begin{align*}
&\int_0^1\frac{v^2(0)}{a} e^{2s\hat\phi(0)}  \,dx + \int_{0}^{\frac{T}{2}}\!\!\!\!\int_0^1 \frac{v^2}{a} e^{2s\tilde\phi}  \,dxdt \notag\\
&  \leq C e^{2s\hat\phi(0)} \int_{\frac{T}{2}}^{T^*} \!\!\!\!\int_0^1  v_x^2  \,dx\,dt + C e^{2s\hat\phi(0)} \int_{0}^{T^*}\!\!\!\!\int_0^1   \frac{g^2}{a} \,dx\,dt.
\end{align*}
Recalling the definition of $\tilde \phi$, we observe that
\begin{equation*}
\check{\phi}\left(T^*\right) \leq \tilde{\phi} \;\quad \text{in}\quad \left(0,T^*\right)\times(0,1).
\end{equation*}
It results that
\begin{align}\label{estim_1}
&\int_0^1\frac{v^2(0)}{a} e^{2s\hat\phi(0)}  \,dx + \int_{0}^{\frac{T}{2}}\!\!\!\!\int_0^1 \frac{v^2}{a} e^{2s\tilde\phi}  \,dxdt \notag\\
&  \leq C e^{2s[\hat{\phi}(0)-\check{\phi}(T^*)]} \left( \int_{\frac{T}{2}}^{T^*} \!\!\!\!\int_0^1  v_x^2   e^{2s\tilde\phi}  \,dx\,dt +   \int_{0}^{T^*}\!\!\!\!\int_0^1  \frac{g^2}{a} e^{2s\tilde\phi} \,dx\,dt\right).
\end{align}
Using the fact that $\tilde\phi= \phi$ in $(T/2, T)\times(0,1)$, by the Carleman inequality \eqref{Carl_estimate_local}, we have that there exist two positive constants $C$ and $s_0$ such that
\begin{align*}
\int_{\frac{T}{2}}^{T^*} \!\!\!\!\int_0^1  v_x^2  e^{2s\tilde\phi}\,dx\,dt & = \int_{\frac{T}{2}}^{T^*} \!\!\!\!\int_0^1  v_x^2  e^{2s\phi}  \,dx\,dt\notag\\
 &\leq C \int_{0}^{T} \!\!\!\!\int_0^1 s \theta  v_x^2  e^{2s\phi}\,dx\,dt \notag\\
&\leq C\Big(\int\!\!\!\!\!\int_{Q} \frac{g^2}{a} e^{2s\Phi}\,dx\,dt  + \int\!\!\!\!\!\int_{Q_{\omega}} s^3 \theta^3 \frac{v^2}{a} e^{2s\Phi} \,dx dt\Big),
\end{align*}
for all $s \ge s_0$.

Using this last inequality in \eqref{estim_1} and taking into account the fact that $\tilde\phi, \Phi \leq \tilde\Phi$ in $Q$, we find
\begin{align}\label{estim_3}
&\|e^{s\hat{\phi}(0)}v(0)\|_{L_{\frac{1}{a}}^2(0,1)}^2 +  \int_{0}^{\frac{T}{2}}\!\!\!\!\int_0^1 \frac{v^{2}}{a} e^{2s\tilde{\phi}}\,dx\,dt \notag \\
&  \leq C e^{2s[\hat{\phi}(0)-\check{\phi}(T^*)]} \left ( \int\!\!\!\!\!\int_{Q} \frac{g^2}{a}  e^{2s\tilde\Phi}\,dx\,dt  + \int\!\!\!\!\!\int_{Q_{\omega}} s^3 \theta^3 \frac{v^2}{a} e^{2s\Phi}\,dx dt\right).
\end{align}

On the other hand, applying the Hardy-Poincar\'e inequality \eqref{hardyineq_nondiv} to $v e^{s\tilde\phi}$ and using the definition of $\tilde\phi$, one has
\begin{align}\label{estim_4}
\int_0^1 \frac{(ve^{s\tilde\phi})^2}{a}   \,dx & \leq C\int_0^1 (v e^{s\tilde\phi})_x^2   \,dx \notag\\
&\leq C\int_0^1 \left( v_x^2 + s^2 \nu^2 \left(\frac{x}{a}\right)^2 e^{2x^2} v^2 \right) e^{2s\tilde\phi}  \,dx \notag\\
& \leq C  \int_0^1    \left(s \nu   v_{x}^{2} + s^{3} \nu^3 \left(\frac{x}{ a}\right)^2 v^{2}\right) e^{2s\tilde\phi}  \,dx
\end{align}
for $s$ large enough. Thus, recalling that
$\nu=\theta$ and $\tilde\phi= \phi$ in $(T/2, T)\times(0,1)$, by \eqref{Carl_estimate_local}, we obtain that there exist two positive constants $C$ and $s_0$ such that
\begin{align}\label{estim_5}
\int_{\frac{T}{2}}^T\!\!\!\!\int_0^1 \frac{v^2}{a} e^{2s\tilde\phi}  \,dxdt  &\leq C  \int_{\frac{T}{2}}^T\!\!\!\!\int_0^1    \Big(s \nu v_{x}^{2} + s^{3} \nu^3 \left(\frac{x}{a}\right)^2 v^{2}\Big) e^{2s\tilde\phi}  \,dx dt \notag \\
&= C \int_{\frac{T}{2}}^T\!\!\!\!\int_0^1    \Big(s \theta   v_{x}^{2} + s^{3} \theta^3 \left(\frac{x}{ a}\right)^2 v^{2}\Big) e^{2s\phi}  \,dx dt \notag \\
& \leq C \int\!\!\!\!\!\int_{Q} \Big(s \theta   v_{x}^{2} + s^{3} \theta^3 \left(\frac{x}{ a}\right)^2 v^{2}\Big) e^{2s\phi}\,dx\,dt \notag \\
&\leq C\Big( \int\!\!\!\!\!\int_{Q} \frac{g^2}{a} e^{2s\Phi}\,dx\,dt  + \int\!\!\!\!\!\int_{Q_{\omega}} s^3 \theta^3 \frac{v^2}{a} e^{2s\Phi} \,dx\,dt\Big),
\end{align}
for all $s \ge s_0$.

Finally, notice that, for a positive constant $c$, the function $s \mapsto s^3 e^{-cs}$ is nonincreasing for $s$ large enough. Thus, since $ \nu \leq \theta$ in $(0,T)$,  we infer that
$$s^{3}\theta^3 e^{2s\Phi} \leq s^{3}\nu^3 e^{2s\tilde{\Phi}}, $$
for $s$ large enough.  This together with \eqref{estim_3} and \eqref{estim_5} imply \eqref{modcarl_nondiv}.
\end{proof}

As a consequence of Lemma \ref{lemma_modifiedcarl_nondiv} we will prove the null controllability  for the nonhomogeneous degenerate heat equation \eqref{nonhom_problem} with more regular solution. Such a result plays a fundamental role in obtaining null controllability for the nonlocal problem \eqref{problem_nondiv}. To this aim, following the arguments developed in \cite{FI1996,TG2016}, we define the following weighted space
\begin{equation*}
E_{s}:=\big\{ y \in \mathcal{Z}| \quad e^{-s\tilde{\Phi}} y \in L_{\frac{1}{a}}^2(Q) \big\}
\end{equation*}
endowed with the associated norm
\begin{align*}
\| y \|_{E_{s}}^2:=  \int\!\!\!\!\!\int_{Q} e^{-2s\tilde{\Phi}} \frac{y^2}{a} \, dx dt.
\end{align*}
Observe that, if we consider $y$ in $E_{s}$, then $y$ is continuous in time and satisfies
$$\int\!\!\!\!\!\int_{Q} e^{-2s\tilde{\Phi}} \frac{y^2}{a} \, dx dt<+\infty,$$
thus, from the definition of $\tilde{\Phi}$, in particular the fact that $\tilde{\Phi}<0$, it comes that $$ y(T,\cdot) = 0 \quad \text{in} \; (0,1).$$

In the sequel, by $s_0$ we shall denote the parameter given in Lemma \ref{lemma_modifiedcarl_nondiv}.
Following the classical approach presented in several works (see, e.g., \cite{cara2004,cara2006,gueye,TG2016}) and using the modified Carleman inequality \eqref{modcarl_nondiv}, we obtain the following null controllability result for \eqref{nonhom_problem}.

\begin{theorem}\label{Thm_nul_ctrl_nonhomog_nondiv}
Assume Hypothesis \ref{Hypoth_2_nondiv}. Let $T^*\in \left(\ds\frac{T}{2},T\right)$ and suppose that $e^{-s\tilde{\phi}}f \in L_{\frac{1}{a}}^2(Q)$ with $s\geq s_0$. Then, for any $y_0\in H_{\frac{1}{a}}^1(0,1)$, there exists a control function $u \in L_{\frac{1}{a}}^2(Q)$, such that the associated solution $y$ of \eqref{nonhom_problem} belongs to $E_{s}$.

Moreover, there exists a positive constant $C$ such that the couple $(y, u)$ satisfies
\begin{equation}\label{mainestimate_nondiv}
\begin{aligned}
&\int\!\!\!\!\!\int_{Q} \frac{y^2}{a} e^{-2s\tilde{\Phi}}\,dx\,dt +
\int\!\!\!\!\!\int_{Q_{\omega}}  s^{-3} \nu^{-3} \frac{u^{2}}{a} e^{-2s\tilde{\Phi}}\,dx\,dt \\
&\leq C e^{2s[\hat{\phi}(0)-\check{\phi}(T^*)]} \Big(\|f e^{-s\tilde{\phi}}\|^2_{L_{\frac{1}{a}}^2(Q)} +  \|y_0 e^{-s\hat{\phi}(0)}\|^2_{L^2_{\frac{1}{a}}(0,1)}\Big).
\end{aligned}
\end{equation}
\end{theorem}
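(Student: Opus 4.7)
The plan is to apply the classical Fursikov--Imanuvilov duality argument (as in \cite{cara2004,cara2006,gueye,TG2016}), the modified Carleman estimate \eqref{modcarl_nondiv} playing the role of the coercivity inequality for the dual problem. Concretely, I introduce the space
\[
P_0 := \bigl\{v \in C^2(\overline{Q}) : v(t,0) = v(t,1) = 0 \text{ for all } t \in [0,T]\bigr\}
\]
and the symmetric bilinear form
\[
\mathfrak{a}(v,\bar v) := \int\!\!\!\!\!\int_{Q} \frac{(v_t + a v_{xx})(\bar v_t + a \bar v_{xx})}{a} e^{2s\tilde\Phi}\,dx\,dt + \int\!\!\!\!\!\int_{Q_\omega} s^3\nu^3 \frac{v \bar v}{a} e^{2s\tilde\Phi}\,dx\,dt.
\]
Taking $g := v_t + a v_{xx}$ in \eqref{modcarl_nondiv}, one reads off that $\mathfrak{a}(v,v)$ controls both $\|e^{s\hat\phi(0)} v(0)\|_{L^2_{\frac{1}{a}}(0,1)}^2$ and $\int\!\!\!\!\!\int_Q v^2 e^{2s\tilde\phi}/a\,dx\,dt$, so $\mathfrak{a}$ is a genuine inner product on $P_0$; let $\overline{P}$ denote the resulting Hilbert completion.

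The linear form
\[
L(v) := -\int\!\!\!\!\!\int_{Q} \frac{f v}{a}\,dx\,dt - \int_{0}^{1} \frac{y_0 v(0)}{a}\,dx
\]
is then bounded on $P_0$ by Cauchy--Schwarz combined with \eqref{modcarl_nondiv}, yielding
\[
|L(v)| \leq C e^{s[\hat\phi(0)-\check\phi(T^*)]}\Bigl(\|f e^{-s\tilde\phi}\|_{L^2_{\frac{1}{a}}(Q)} + \|y_0 e^{-s\hat\phi(0)}\|_{L^2_{\frac{1}{a}}(0,1)}\Bigr)\sqrt{\mathfrak{a}(v,v)},
\]
so $L$ extends continuously to $\overline{P}$. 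By Lax--Milgram there is a unique $\hat v \in \overline{P}$ with $\mathfrak{a}(\hat v, v) = L(v)$ for every $v \in \overline{P}$, whose $\overline{P}$-norm is controlled by the same right-hand side.

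Next I would set
\[
\hat y := (\hat v_t + a\hat v_{xx}) e^{2s\tilde\Phi}, \qquad \hat u := s^3 \nu^3 \hat v\, e^{2s\tilde\Phi}\, 1_\omega.
\]
With these choices, $\mathfrak{a}(\hat v,v) = L(v)$ becomes
\[
\int\!\!\!\!\!\int_{Q} \frac{\hat y\,(v_t + a v_{xx})}{a}\,dx\,dt + \int\!\!\!\!\!\int_{Q_\omega} \frac{\hat u v}{a}\,dx\,dt = -\int\!\!\!\!\!\int_{Q} \frac{f v}{a}\,dx\,dt - \int_{0}^{1} \frac{y_0 v(0)}{a}\,dx.
\]
Testing against $v \in C^\infty_c((0,T)\times(0,1))$ recovers $\hat y_t - a \hat y_{xx} = f + 1_\omega \hat u$ in the sense of distributions; allowing $v(0)\neq 0$ gives $\hat y(0) = y_0$. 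Furthermore
\[
\mathfrak{a}(\hat v,\hat v) = \int\!\!\!\!\!\int_{Q} \frac{\hat y^2}{a} e^{-2s\tilde\Phi}\,dx\,dt + \int\!\!\!\!\!\int_{Q_\omega} s^{-3}\nu^{-3} \frac{\hat u^2}{a} e^{-2s\tilde\Phi}\,dx\,dt,
\]
so the bound on $\hat v$ is precisely \eqref{mainestimate_nondiv}. The finiteness of the first integral, combined with the fact that $-\tilde\Phi(t,\cdot) \to +\infty$ as $t\to T^-$ (because $\nu(T)=+\infty$ and $\Psi<0$), forces $\hat y(T,\cdot)\equiv 0$, so $\hat y \in E_s$; the hypothesis $y_0 \in H^1_{\frac{1}{a}}(0,1)$ together with the second part of Proposition \ref{prop-Well-posed_nonhom_nondiv} then upgrades $\hat y$ to the class $\mathcal{Z}$.

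The main obstacle is to make the above rigorous, since a priori elements of $\overline{P}$ are only distributions and the integrations by parts used to extract the PDE and the initial condition require more regularity. The standard workaround is an approximation/penalization scheme: for $\varepsilon>0$, minimize
\[
J_\varepsilon(u) := \frac{1}{2}\int\!\!\!\!\!\int_{Q} \frac{y^2}{a} e^{-2s\tilde\Phi}\,dx\,dt + \frac{1}{2}\int\!\!\!\!\!\int_{Q_\omega} s^{-3}\nu^{-3} \frac{u^2}{a} e^{-2s\tilde\Phi}\,dx\,dt + \frac{1}{2\varepsilon}\|y(T)\|_{L^2_{\frac{1}{a}}(0,1)}^2
\]
over admissible $(y,u)$ with $y$ the classical $\mathcal{Z}$-solution of \eqref{nonhom_problem} provided by Proposition \ref{prop-Well-posed_nonhom_nondiv}; write the Euler--Lagrange adjoint system at level $\varepsilon$, derive $\varepsilon$-uniform bounds directly from \eqref{modcarl_nondiv}, and pass to the limit $\varepsilon\downarrow 0$ to recover the claimed $(\hat y,\hat u)$ satisfying both the system \eqref{nonhom_problem} with $\hat y(T)=0$ and the estimate \eqref{mainestimate_nondiv}.
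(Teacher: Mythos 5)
Your proposal is correct and follows essentially the same route as the paper's proof: the same Carleman-based bilinear form on smooth functions vanishing on the lateral boundary, Lax--Milgram on its Hilbert completion, and the same reconstruction of $(y,u)$ from the dual variable, which yields \eqref{mainestimate_nondiv} exactly as in the text. The only divergence is the final verification step: where you propose a penalization scheme to justify rigorously that $\hat y$ solves \eqref{nonhom_problem} with $\hat y(0)=y_0$, the paper instead identifies $\bar y$ with the unique solution defined by transposition (testing the Lax--Milgram identity against solutions of the backward adjoint problem with zero final datum), which sidesteps the regularity concern you raise without any approximation argument.
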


\begin{proof}
Following the arguments in \cite{cara2004}, fixed $s \geq s_0$, let us consider the functional
\begin{equation}\label{extremal}
J(y,u)= \int\!\!\!\!\!\int_{Q} \frac{y^2}{a} e^{-2s\tilde{\Phi}}\,dx\,dt +
\int\!\!\!\!\!\int_{Q_{\omega}}  s^{-3}\nu^{-3}\frac{ u^{2}}{a} e^{-2s\tilde{\Phi}}\,dx\,dt ,
\end{equation}
where $u \in L^2_{\frac{1}{a}}(Q)$ and $y$ satisfies the following system
\begin{equation}\label{problem001}
\left\{
\begin{array}{ll}
y_t - a y_{xx}   =f + 1_{\omega} u, & (t,x) \in Q,\\
y(t,0)=y(t,1)=0, & t \in (0,T), \\
y(0,x)=y_0(x),\quad y(T,x)=0, & x \in (0,1).
\end{array}
\right.
\end{equation}
By standard arguments (see for instance \cite{Lions71}), $J$ attains its minimum at a unique point  $(\bar{y},\bar{u})$.

Let us denote by $ \mathcal L_a$ the parabolic operator
$$\mathcal L_a y:= y_t - a y_{xx}  \qquad \text{in}\quad Q.
$$
We are going to show that there exists a dual variable $\bar{v}$ such that the couple $(\bar{y},\bar{u})$ is characterized by
\begin{equation}\label{step0}
\left\{
\begin{array}{ll}
\bar{y}=e^{2s\tilde{\Phi}} \mathcal L_{a}^{\star} \bar{v}, &\quad \text{in}\quad Q,\\
\bar{u}= - s^3 \nu^3 e^{2s\tilde{\Phi}} \bar{v},  &\quad \text{in}\quad (0,T)\times \omega,\\
\bar{v}=0, &\quad \text{on}\quad (0,T)\times\{0,1\},
\end{array}
\right.
\end{equation}
where $\mathcal L_a^{\star}$ denotes the (formally) adjoint operator of $\mathcal L_a$.

Let $\mathcal{H}_0$ be the linear space
$$
\mathcal{H}_0=\big\{v \in C^{\infty}(\overline{Q}): v=0 \quad \text{on}\quad (0,T)\times\{0,1\}\big\},
$$
and let us define the following bilinear form:
\begin{equation*}
\kappa(v_1,v_2)= \int\!\!\!\!\!\int_{Q} \frac{e^{2s\tilde{\Phi}}}{a} \mathcal L_a^{\star} v_1 \mathcal L_a^{\star} v_2\,dx\,dt
+\int\!\!\!\!\!\int_{Q_{\omega}}  s^{3}\nu^3 \frac{e^{2s\tilde{\Phi}}}{a} v_1 v_2\,dx\,dt,\quad \forall \; v_1,v_2\in \mathcal{H}_0.
\end{equation*}
Since $\mathcal{H}_0\subseteq \mathcal{Z}$, we can apply the modified Carleman inequality \eqref{modcarl_nondiv} in $\mathcal{H}_0$, obtaining:
\begin{equation}\label{step000}
\|e^{s\hat{\phi}(0)}v(0)\|_{L_{\frac{1}{a}}^2(0,1)}^2 +  \int\!\!\!\!\!\int_{Q} \frac{v^{2}}{a} e^{2s\tilde{\phi}}\,dx\,dt
\leq C e^{2s[\hat{\phi}(0)-\check{\phi}(T^*)]} \kappa(v,v), \qquad \forall \; v \in \mathcal{H}_0.
\end{equation}
Observe that $\kappa(\cdot,\cdot)$ is a strictly positive and symmetric bilinear form in $\mathcal{H}_0$. Therefore, $\kappa(\cdot,\cdot)$ is a scalar product in $\mathcal{H}_{0}$.

Let $\mathcal{H}$ be the completion of $\mathcal{H}_0$ for the norm $\|v\|_{\mathcal{H}}:= (\kappa(v,v))^{1/2}$. Then $\mathcal{H}$ is a
Hilbert space for the scalar product $\kappa(\cdot,\cdot)$.

Now, we introduce the linear form $ \ell: \mathcal{H} \rightarrow \mathbb{R}$ given by
\begin{equation*}
\ell(v)=\int\!\!\!\!\!\int_{Q} \frac{f v}{a}\,dx\,dt + \int_{0}^{1}\frac{y_0 v(0)}{a}\,dx, \quad
\forall \; v\in \mathcal{H}.
\end{equation*}
Using the Cauchy-Schwarz inequality, in view of \eqref{step000}, it is clear that
\begin{align*}
|\ell(v)| &\leq \|f e^{-s\tilde{\phi}}\|_{L^2_{\frac{1}{a}}(Q)} \|v e^{s\tilde{\phi}}\|_{L^2_{\frac{1}{a}}(Q)}
+ \|y_0 e^{-s\hat{\phi}(0)}\|_{L^2_{\frac{1}{a}}(0,1)} \|v(0)e^{s\hat{\phi}(0)}\|_{L^2_{\frac{1}{a}}(0,1)} \\
&\leq C e^{s[\hat{\phi}(0)-\check{\phi}(T^*)]} \Big(\|f e^{-s\tilde{\phi}}\|_{L^2_{\frac{1}{a}}(Q)} +  \|y_0 e^{-s\hat{\phi}(0)}\|_{L^2_{\frac{1}{a}}(0,1)}\Big) \|v\|_{\mathcal{H}}.
\end{align*}
That is, $\ell$ is a bounded linear form on $\mathcal{H}$. Consequently, from the Lax-Milgram lemma, there exists a unique $\bar v$ in $\mathcal{H}$ such that
\begin{equation}\label{step001}
\kappa(\bar{v},v)= \ell(v),\quad \forall \;v\in \mathcal{H}.
\end{equation}
In addition, $\bar{v}$ satisfies the estimate
\begin{equation}\label{step002}
\|\bar{v}\|_{\mathcal{H}}\leq C e^{s[\hat{\phi}(0)-\check{\phi}(T^*)]}  \Big(\|f e^{-s\tilde{\phi}}\|_{L^2_{\frac{1}{a}}(Q)} +  \|y_0 e^{-s\hat{\phi}(0)}\|_{L^2_{\frac{1}{a}}(0,1)}\Big).
\end{equation}
Now, let us set
\begin{equation}\label{step003}
\begin{cases}
&\bar{y}=e^{2s\tilde{\Phi}} \mathcal L_a^{\star} \bar{v}\\
&\bar{u}= - 1_{\omega} s^3 \nu^3 e^{2s\tilde{\Phi}} \bar{v}.
\end{cases}
\end{equation}
With these definitions, from \eqref{step002}, it is readily seen that $\bar y$ and $\bar u$ satisfy
\begin{equation}\label{step004}
\begin{aligned}
&\int\!\!\!\!\!\int_{Q} \frac{\bar{y}^2}{a} e^{-2s\tilde{\Phi}}\,dx\,dt +
\int\!\!\!\!\!\int_{Q_{\omega}} s^{-3} \nu^{-3} \frac{\bar{u}^{2}}{a} e^{-2s\tilde{\Phi}}\,dx\,dt \\
&\leq C e^{2s[\hat{\phi}(0)-\check{\phi}(T^*)]}
\Big(\|f e^{-s\tilde{\phi}}\|^2_{L^2_{\frac{1}{a}}(Q)} +  \|y_0 e^{-s\hat{\phi}(0)}\|^2_{L^2_{\frac{1}{a}}(0,1)}\Big).
\end{aligned}
\end{equation}
Hence, \eqref{mainestimate_nondiv} holds.

It remains to check that $\bar{y}$ is the solution of \eqref{problem001} corresponding to $\bar{u}$. First of all, from \eqref{step004}, it is immediate that $\bar{y}\in E_{s}$ and $\bar{u}\in L^2_{\frac{1}{a}}(Q)$.

Let us denote by $\hat y$ the weak solution of \eqref{nonhom_problem}
with $u=\bar{u}$. Clearly, $\hat{y}$ is also the unique solution defined by transposition. This means that $\hat{y}$ is the unique function in $L_{\frac{1}{a}}^2(Q)$ satisfying
\begin{equation*}
\int\!\!\!\!\!\int_{Q} \frac{\hat{y} h}{a}\,dx\,dt= \int\!\!\!\!\!\int_{Q} 1_{\omega} \frac{\bar{u} v}{a}\,dx\,dt
+ \int\!\!\!\!\!\int_{Q} \frac{fv}{a}\,dx\,dt + \int_{0}^{1} \frac{y_0 v(0)}{a}\,dx,\quad \forall \;h \in L_{\frac{1}{a}}^2(Q),
\end{equation*}
where $v$ is the solution to the following system
\begin{equation*}
\left\{
\begin{array}{ll}
-v_t - a v_{xx} = h, & (t,x) \in Q,\\
v(t,0)=v(t,1)=0, & t \in (0,T), \\
v(T,x)=0 & x \in (0,1).
\end{array}
\right.
\end{equation*}
From \eqref{step003} and \eqref{step001}, we see that $\bar{y}$ also satisfies the last identity.
Consequently, $\bar{y}=\hat{y}$.

This ends the proof of Theorem \ref{Thm_nul_ctrl_nonhomog_nondiv}.
\end{proof}
\begin{remark}
Notice that Theorem \ref{Thm_nul_ctrl_nonhomog_nondiv} ensures the null controllability result for the nonhomogeneous degenerate equation \eqref{nonhom_problem} provided that the source terms $f$ satisfies
\begin{equation}\label{conditiononf}
e^{\frac{C_0}{(T-t)^2}} f \in L_{\frac{1}{a}}^2(Q)
\end{equation}
for some positive constant $C_0$. An analogous condition is required also in the nondegenerate case (see  \cite[Theorem 2.1, page 24]{FI1996}).
\end{remark}

\subsection{Carleman estimates for the problem in divergence form}

In this subsection, we present some Carleman estimates for the solutions to the associated adjoint problem
of \eqref{problem_div}, which will provide that the nonhomogeneous degenerate heat equation \eqref{nonhom_problem_div}
is null controllable. Thus, we consider the next adjoint problem
\begin{equation}\label{adjproblem_div}
\left\{
  \begin{array}{ll}
-z_t - (az_{x})_x = g, & (t,x) \in Q,\\
z(t, 1) = 0, & t \in (0,T),\\
\left\{
\begin{array}{ll}
z(t, 0) = 0,\quad \text{ for the (WD) case},\\
(a z_{x})(t, 0)= 0, \quad \text{for the (SD) case},
\end{array}
\right.
& t \in (0,T),
\\
z(T,x)=z_T(x), & x \in (0,1).
  \end{array}
\right.
\end{equation}
Here, we assume that $g\in L^2(Q)$, while on the diffusion coefficient $a$ we make the following assumptions:
\begin{hypothesis}\label{Hypoth_2_div}
The function $a\in C[0,1]\cap C^1(0,1]$ is such that $a(0) = 0$, $a>0$ on $(0, 1]$ and
\begin{enumerate}
\item (WD) CASE. There exists $\alpha \in [0,1)$, such that $x a'(x) \leq \alpha a(x)$, $\forall \;x \in [0,1]$.
\item (SD) CASE. There exists $\alpha \in [1,2)$, such that  $x a'(x) \leq \alpha a(x)$, $\forall \;x \in [0,1]$ and
\begin{equation*}
\left\{
\begin{array}{ll}
\exists \beta \in (1, \alpha],\, x \mapsto \dfrac{a(x)}{x^{\beta}}\quad \text{is nondecreasing near}\quad 0,\quad \text{if}\quad \alpha > 1, \\
\exists \beta \in (0, 1),\, x \mapsto \dfrac{a(x)}{x^{\beta}}\quad \text{is nondecreasing near}\quad 0,\quad \text{if}\quad \alpha=1.
\end{array}
\right.
\end{equation*}
\end{enumerate}
\end{hypothesis}

In this case, let us introduce the weight functions
\begin{equation}\label{weightfunc_div}
\varphi(t,x):=\theta(t)\Upsilon(x)\quad \text{where},\quad \Upsilon(x):= c\Big(\int_{0}^{x}\frac{y}{a(y)}\,dy - d\Big),
\end{equation}
and $\theta$ is the function defined in \eqref{weightfunc_deg_nodiv}.
The parameters $d$ and $c$ are
chosen such that $d> d^\star:=\displaystyle\sup_{[0,1]}\int_{0}^x\frac{y}{a(y)}\,dy$, $c\geq \frac{e^{2 \rho \|\sigma\|_{\infty}} -1}{d-d^\star}$, where  $\rho$ and $\sigma$ are defined in \eqref{weightfunc_nondeg}.

\begin{remark}
For this choice of the parameters $d, c$ and $\rho$, it is clear that the weight functions $\varphi$ and $\Phi$
satisfy the next inequality:
\begin{equation}\label{ineq_div}
\varphi(t,x) \leq \Phi(t,x) \quad \text{for every}\quad  (t,x)\in Q.
\end{equation}
\end{remark}

Then, the following Carleman inequality holds:
\begin{theorem}\cite[Theorem 3.3]{AHMS19}\label{Carleman01_div} Assume Hypothesis \ref{Hypoth_2_div}.
There exist two positive constants $C >0$ and $s_0 > 0$ such that, every solutions $z\in \mathcal{V}$ of \eqref{adjproblem_div} satisfies, for all
$s \geq s_0$
\begin{align}\label{Carl_div}
\int\!\!\!\!\!\int_{Q} \Big(s\theta a(x) z_{x}^{2} &+ s^{3}\theta^3 \frac{x^2}{a(x)}z^{2}\Big) e^{2s\varphi}\,dx\,dt \notag\\
&\leq
C  \Big(\int\!\!\!\!\!\int_{Q} g^2  e^{2s\Phi}\,dx\,dt + \int\!\!\!\!\!\int_{Q_{\omega}} s^{3}\theta^3 z^2 e^{2s\Phi}\,dx\,dt\Big).
\end{align}
\end{theorem}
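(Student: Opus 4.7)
The plan is to follow the now-standard strategy for Carleman estimates for degenerate parabolic operators in divergence form, as developed in the framework of Alabau--Cannarsa--Fragnelli and used in \cite{AHMS19}. First, I would perform the conjugation $w := e^{s\varphi}z$, so that $w$ vanishes at $t=0,T$ (thanks to $\theta(t)\to+\infty$ there and $\Upsilon<0$), at $x=1$, and, at $x=0$, satisfies either $w(t,0)=0$ in the (WD) case or $(aw_x)(t,0)=0$ in the (SD) case after a short check using the definition of $\varphi_x=c\theta\,x/a$. A direct computation transforms the equation $-z_t-(az_x)_x=g$ into
\[
P^+w+P^-w=e^{s\varphi}g,
\]
where $P^+w=-(aw_x)_x-s^2 a\varphi_x^2 w-s\varphi_t w$ collects the self-adjoint terms and $P^-w=w_t+2sa\varphi_x w_x+s(a\varphi_x)_x w$ collects the anti-self-adjoint ones (or the analogous symmetric splitting).

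Next, I would take $L^2(Q)$ norms, use $\|P^+w\|^2+\|P^-w\|^2+2\langle P^+w,P^-w\rangle=\|e^{s\varphi}g\|^2$ and compute the scalar product $\langle P^+w,P^-w\rangle$ by integration by parts in both $t$ and $x$. This produces a sum of distributed terms plus boundary contributions at $x=0$, $x=1$ and at $t=0,T$. The temporal boundary terms vanish thanks to the blow-up of $\theta$. The choice of $\Upsilon$ is tailored so that the dominant distributed terms are precisely
\[
\int\!\!\!\!\!\int_{Q}\Bigl(s\theta a(x)w_x^2+s^3\theta^3\frac{x^2}{a(x)}w^2\Bigr)dx\,dt,
\]
which, after undoing the change of variable, give the left-hand side of \eqref{Carl_div}. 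Lower-order terms appearing on the other side are absorbed by taking $s$ sufficiently large.

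The boundary term at $x=1$ is handled as in the classical non-degenerate case and can be absorbed or estimated using the Dirichlet condition together with the properties of $\Upsilon$ on $(\bar\beta,1)$. The delicate part, and the step I expect to be the main obstacle, is the boundary term at $x=0$: since $a(0)=0$, quantities like $a(x)w_x^2\varphi_x$ and $(a\varphi_x)_x w^2$ must be shown to vanish as $x\to 0^+$. This is where Hypothesis \ref{Hypoth_2_div} is used in an essential way: the bound $xa'(x)\le\alpha a(x)$ controls $(a\varphi_x)_x$, the monotonicity of $a(x)/x^\beta$ near $0$ (with the distinct ranges of $\beta$ in the (WD) and (SD) cases) yields the correct one-sided comparison between $a(x)$ and $x^2/a(x)$, and the Hardy-type inequalities available in $H^1_a(0,1)$ together with the boundary condition at $x=0$ ensure the trace contributions are either zero or of the right sign.

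Finally, the estimate obtained so far has an observation concentrated on a small interval where $\sigma_x\ne 0$, which can be taken inside $\omega$. Using a cut-off function supported in $\omega$ and the Caccioppoli-type inequality available in this setting (analogous to Lemma \ref{lemma_caccio} but adapted to the divergence form, and exploiting the inequality $\varphi\le\Phi$ from \eqref{ineq_div}), I would replace the local $w_x^2$ observation by an $L^2$ observation of $z$ weighted by $s^3\theta^3 e^{2s\Phi}$, thereby obtaining \eqref{Carl_div}. The monotonicity of $s\mapsto s^3 e^{-cs}$ for large $s$ is what allows the use of $\Phi$ in place of $\varphi$ on the right-hand side of the observation term, completing the proof.
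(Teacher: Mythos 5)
The paper itself offers no proof of this theorem: it is imported verbatim from \cite[Theorem 3.3]{AHMS19}, and the only argument supplied in the text is the remark that the exponent in $\theta$ can be lowered from $1/[t(T-t)]^4$ to $1/[t(T-t)]^2$ by invoking \cite[Remark 1]{Hajjaj2013}. The only in-paper material to compare your outline against is therefore the analogous two-step argument carried out for the nondivergence operator, namely Theorem \ref{thm_Carl_bound} (proved in the Appendix) followed by Theorem \ref{thm_Carl_local}; your proposal follows exactly that standard strategy (conjugation $w=e^{s\varphi}z$, the identity $\|P^+w\|^2+\|P^-w\|^2+2\langle P^+w,P^-w\rangle=\|e^{s\varphi}g\|^2$, integration by parts, and the use of Hypothesis \ref{Hypoth_2_div} to control the boundary contributions at $x=0$), and it is correct in substance.

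One structural point in your sketch is off, though it concerns ordering rather than a missing idea. The degenerate weight $\Upsilon(x)=c\bigl(\int_0^x y/a(y)\,dy-d\bigr)$ is strictly monotone and involves no $\sigma$ at all, so the first step of the argument cannot produce an interior observation ``on a small interval where $\sigma_x\neq 0$''; what it produces is a boundary observation at $x=1$, exactly as in \eqref{Carl_estimate_bound}. The localized observation on $\omega$ appears only in the second step: one splits $z$ by cut-off functions into a piece supported near $x=0$ (to which the degenerate estimate with boundary term applies, the boundary term disappearing because the cut-off annihilates the solution near $x=1$) and a piece supported away from the degeneracy (to which the classical Fursikov--Imanuvilov estimate with the weight $\Phi$ applies; note that its observation region is $\tilde\omega\subset\omega$, the set where $\sigma$ has its critical point, i.e.\ precisely where $\sigma_x$ is allowed to vanish, not where $\sigma_x\neq0$). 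Caccioppoli's inequality absorbs the commutator terms generated by the cut-offs, and \eqref{ineq_div} together with the monotonicity of $s\mapsto s^3e^{-cs}$ reconciles the two weights on the right-hand side. You name all of these ingredients, so the outline is acceptable once the roles of the boundary observation and of $\sigma$ are straightened out.
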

\begin{remark}
Note that, in \cite{AHMS19} Theorem \ref{Carleman01_div} is shown for $\theta(t)=\frac{1}{t^4(T-t)^4}$. However, by \cite[Remark 1]{Hajjaj2013}, one can prove that
the result remains true also for $\theta=\frac{1}{t^2(T-t)^2}$.
\end{remark}

As for the non divergence case, in order to obtain a Carleman
inequality for solutions of problem \eqref{adjproblem_div} via weights not exploding at $t = 0$, let us consider the following functions:
\[
\tilde{\varphi}(t,x):=\nu(t) \Upsilon(x),\,
\hat{\varphi}(t):=\displaystyle\max_{x\in[0,1]}\tilde{\varphi}(t,x)\quad \text{and} \quad \check{\varphi}(t):=\displaystyle\min_{x\in[0,1]}\tilde{\varphi}(t,x).
\]
With the aid of Proposition \ref{prop-Well-posed_nonhom_div} and Theorem \ref{Carleman01_div}, similar arguments as in Lemma \ref{lemma_modifiedcarl_nondiv} lead to the following modified Carleman inequality.
\begin{lemma}\label{modifiedcarl_div}
Assume Hypothesis \ref{Hypoth_2_div}. There exist two positive constants $C$ and $s_0$ such that, every solutions $z \in \mathcal{V}$ to \eqref{adjproblem_div} satisfies, for all $s \geq s_0$
\begin{align}\label{modcarl_div}
&\|e^{s\hat{\varphi}(0)}z(0)\|_{L^2(0,1)}^2 +  \int\!\!\!\!\!\int_{Q} z^{2} e^{2s\tilde{\varphi}}\,dx\,dt  \notag\\
&\leq C e^{2s[\hat{\varphi}(0)-\check{\varphi}(\frac{5T}{8})]} \Big(\int\!\!\!\!\!\int_{Q} g^2 e^{2s\tilde{\Phi}}\,dx\,dt +
\int\!\!\!\!\!\int_{Q_{\omega}}  s^{3}\nu^3 z^{2} e^{2s\tilde{\Phi}}\,dx\,dt\Big).
\end{align}
\end{lemma}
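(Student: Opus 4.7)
The plan is to mimic the proof of Lemma \ref{lemma_modifiedcarl_nondiv} verbatim, replacing the non-divergence Carleman estimate by its divergence-form counterpart \eqref{Carl_div} and trading the weighted $L^2_{\frac{1}{a}}$ norms for plain $L^2(0,1)$ norms. First I would fix $T^* = 5T/8$ and pick a cut-off $\xi\in C^\infty([0,T])$ with $0\le \xi\le 1$, $\xi\equiv 1$ on $[0,T/2]$, and $\xi\equiv 0$ on $[5T/8,T]$. Setting $\tilde\xi := \xi\, e^{s\hat\varphi(0)}$ and $w := \tilde\xi\, z$, a direct computation shows that $w$ solves
\begin{equation*}
\begin{cases}
-w_t - (aw_x)_x = \tilde g := -\tilde\xi_t z + \tilde\xi g, & (t,x)\in Q,\\
w(t,1)=0,\; \text{(same b.c. at $0$ as for $z$)}, & t\in(0,T),\\
w(T,x)=0, & x\in(0,1),
\end{cases}
\end{equation*}
so that Proposition \ref{prop-Well-posed_nonhom_div} supplies
$\|w(0)\|_{L^2(0,1)}^2+\int\!\!\int_Q w^2\,dx\,dt\le C\int\!\!\int_Q(\tilde\xi_t^2 z^2+\tilde\xi^2 g^2)\,dx\,dt.$

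Next I would unpack both sides. For the left-hand side, since $\xi\equiv 1$ on $[0,T/2]$ and $\tilde\varphi\le\hat\varphi(0)$ on $Q$, one gets $\|w(0)\|_{L^2}^2=\|e^{s\hat\varphi(0)}z(0)\|_{L^2}^2$ and $\int\!\!\int_Q w^2\,dx\,dt\ge \int_0^{T/2}\!\!\int_0^1 z^2 e^{2s\tilde\varphi}\,dx\,dt$. For the right-hand side, $\operatorname{supp}\xi_t\subset[T/2,5T/8]$ and $\operatorname{supp}\xi\subset[0,5T/8]$; on this slab $\check\varphi(5T/8)\le\tilde\varphi$, so multiplying and dividing by $e^{2s\tilde\varphi}$ produces the exponential factor
\begin{equation*}
\text{RHS}\le C e^{2s[\hat\varphi(0)-\check\varphi(5T/8)]}\!\!\left(\int_{T/2}^{5T/8}\!\!\!\int_0^1 z^2 e^{2s\tilde\varphi}\,dx\,dt+\int\!\!\!\!\int_Q g^2 e^{2s\tilde\Phi}\,dx\,dt\right),
\end{equation*}
where I have also used $\tilde\varphi\le\tilde\Phi$ (inherited from \eqref{ineq_div} and $\nu\le\theta$) on the $g$-term.

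The crucial remaining task is to absorb the weighted $z^2$ integral (and, at the same time, obtain the missing piece $\int_{T/2}^T\!\!\int_0^1 z^2e^{2s\tilde\varphi}\,dx\,dt$ that is needed to complete the left-hand side of \eqref{modcarl_div}) by the Carleman estimate \eqref{Carl_div}. On $[T/2,T]$ one has $\nu\equiv\theta$ and $\tilde\varphi\equiv\varphi$, so applying the Hardy–Poincaré-type bound available in the divergence setting to $ze^{s\tilde\varphi}$ (after expanding $(ze^{s\tilde\varphi})_x$ and using $\Upsilon_x=c\,x/a$) yields
\begin{equation*}
\int_{T/2}^T\!\!\!\int_0^1 z^2 e^{2s\tilde\varphi}\,dx\,dt\le C\int_{T/2}^T\!\!\!\int_0^1\Big(s\theta a z_x^2+s^3\theta^3\frac{x^2}{a}z^2\Big)e^{2s\varphi}\,dx\,dt,
\end{equation*}
for $s$ large enough. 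The inner integrand is controlled by Theorem \ref{Carleman01_div}.

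Finally, I would assemble the estimate by summing the $[0,T/2]$ and $[T/2,T]$ contributions for the $z^2$ integral on the left, and, on the right, pass from $e^{2s\Phi}$ and $\theta$ to $e^{2s\tilde\Phi}$ and $\nu$ in the observation term via the monotonicity argument already used in the proof of Lemma \ref{lemma_modifiedcarl_nondiv}: since $s\mapsto s^3 e^{-cs}$ is decreasing for $s$ large and $\nu\le\theta$ with $\Psi<0$, one has $s^3\theta^3 e^{2s\Phi}\le s^3\nu^3 e^{2s\tilde\Phi}$. The resulting inequality is exactly \eqref{modcarl_div}. The main obstacle I expect is step three: checking that a Hardy–Poincaré inequality suitable for converting $z^2$ into $a z_x^2+(x^2/a)z^2$ is available in both the (WD) and (SD) regimes under Hypothesis \ref{Hypoth_2_div}. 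Such inequalities are standard in the Alabau–Cannarsa–Fragnelli framework (given the boundary conditions $z(t,1)=0$ and either $z(t,0)=0$ or $(az_x)(t,0)=0$), so this reduces to quoting the appropriate version from the referenced well-posedness literature; everything else is routine bookkeeping of weights.
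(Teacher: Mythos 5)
Your proposal is correct and is essentially the proof the paper intends: the paper omits the argument, stating only that Lemma \ref{modifiedcarl_div} follows from Proposition \ref{prop-Well-posed_nonhom_div} and Theorem \ref{Carleman01_div} by the same arguments as Lemma \ref{lemma_modifiedcarl_nondiv}, and your write-up is exactly that adaptation with $T^*=5T/8$. The one point you flag as a possible obstacle --- a Poincar\'e-type inequality $\int_0^1 z^2\,dx\le C\int_0^1 a z_x^2\,dx$ for $z(1)=0$ valid in both the (WD) and (SD) regimes --- does hold, since by Cauchy--Schwarz and Fubini $\int_0^1 z^2\,dx\le\big(\int_0^1 \tfrac{x}{a(x)}\,dx\big)\int_0^1 a z_x^2\,dx$, and $\int_0^1\tfrac{x}{a(x)}\,dx=d^\star<\infty$ under Hypothesis \ref{Hypoth_2_div}.
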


Once we have got estimate \eqref{modcarl_div}, we are ready to solve the null controllability problem for \eqref{nonhom_problem_div}. In fact, arguing as in the non divergence case, it is possible to obtain the following result.
\begin{theorem}\label{firstresult_div}
Assume Hypothesis \ref{Hypoth_2_div}. Let $T>0$ and suppose that $e^{-s\tilde{\varphi}}f \in L^2(Q)$ with $s\geq s_0$. Then, for any $y_0\in H^{1}_{a}(0,1)$, there exists a control function $u \in L^2(Q)$ such that the associated solution $y\in \mathcal{V}$
of \eqref{nonhom_problem_div} is in the space
\begin{equation*}
\mathcal{E}_{s}:=\big\{y\in \mathcal{V}: e^{-s\tilde{\Phi}}y \in L^2(Q) \big\}.
\end{equation*}
Moreover, there exists a positive constant $C$ such that the following
estimate holds:
\begin{equation}\label{mainestimate_div}
\begin{aligned}
&\int\!\!\!\!\!\int_{Q} y^2 e^{-2s\tilde{\Phi}}\,dx\,dt +
\int\!\!\!\!\!\int_{Q_{\omega}}  s^{-3} \nu^{-3} u^{2} e^{-2s\tilde{\Phi}}\,dx\,dt \\
&\leq C e^{2s[\hat{\varphi}(0)-\check{\varphi}(\frac{5T}{8})]} \Big(\int\!\!\!\!\!\int_{Q} f^{2} e^{-2s\tilde{\varphi}}\,dx\,dt+ \|y_0e^{-s\hat{\varphi}(0)}\|_{L^2(0,1)}^2\Big).
\end{aligned}
\end{equation}
\end{theorem}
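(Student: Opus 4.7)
The plan is to mimic the Lax--Milgram/duality argument of Theorem \ref{Thm_nul_ctrl_nonhomog_nondiv}, substituting the divergence operator $\mathcal{L}_a y := y_t - (ay_x)_x$ (with adjoint $\mathcal{L}_a^\star v := -v_t - (av_x)_x$) for the non-divergence one, the space $L^2$ for $L^2_{1/a}$, the weights $(\tilde\varphi,\hat\varphi,\check\varphi)$ for $(\tilde\phi,\hat\phi,\check\phi)$, and Lemma \ref{modifiedcarl_div} in place of Lemma \ref{lemma_modifiedcarl_nondiv}. All analytic inputs are already available: well-posedness (Proposition \ref{prop-Well-posed_nonhom_div}) and the modified Carleman estimate \eqref{modcarl_div}.

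Fix $s\geq s_0$ and consider the functional
\[
J(y,u)=\int\!\!\!\!\!\int_{Q} y^2 e^{-2s\tilde\Phi}\,dx\,dt + \int\!\!\!\!\!\int_{Q_{\omega}} s^{-3}\nu^{-3} u^2 e^{-2s\tilde\Phi}\,dx\,dt
\]
to be minimized over admissible pairs $(y,u)$ with $u\in L^2(Q)$, $y$ solving \eqref{nonhom_problem_div} and $y(T,\cdot)=0$. A formal optimality computation suggests that the minimizer $(\bar y,\bar u)$ has the dual representation $\bar y = e^{2s\tilde\Phi}\mathcal{L}_a^\star\bar v$ and $\bar u = -1_\omega s^3\nu^3 e^{2s\tilde\Phi}\bar v$, where $\bar v$ satisfies the appropriate adjoint boundary conditions ($\bar v(t,1)=0$, together with $\bar v(t,0)=0$ in the (WD) case or $(a\bar v_x)(t,0)=0$ in the (SD) case). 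To construct $\bar v$ rigorously, I would introduce the space $\mathcal{H}_0$ of smooth functions on $\overline Q$ satisfying these boundary conditions, and equip it with the bilinear form
\[
\kappa(v_1,v_2) := \int\!\!\!\!\!\int_{Q} e^{2s\tilde\Phi}\mathcal{L}_a^\star v_1\,\mathcal{L}_a^\star v_2\,dx\,dt + \int\!\!\!\!\!\int_{Q_\omega} s^3\nu^3 e^{2s\tilde\Phi} v_1 v_2\,dx\,dt.
\]
Lemma \ref{modifiedcarl_div} applied on $\mathcal{H}_0\subset \mathcal{V}$ shows that $\kappa$ is a genuine scalar product: up to the multiplicative factor $e^{2s[\hat\varphi(0)-\check\varphi(5T/8)]}$, $\kappa(v,v)$ dominates both $\int\!\!\!\!\!\int_Q v^2 e^{2s\tilde\varphi}\,dxdt$ and $\|e^{s\hat\varphi(0)}v(0)\|_{L^2(0,1)}^2$. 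Let $\mathcal{H}$ be its completion.

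On $\mathcal{H}$ I would consider the linear form $\ell(v):=\int\!\!\!\!\!\int_{Q} fv\,dx\,dt + \int_0^1 y_0 v(0)\,dx$. Cauchy--Schwarz together with the above coercivity yields $|\ell(v)| \le C e^{s[\hat\varphi(0)-\check\varphi(5T/8)]}\bigl(\|fe^{-s\tilde\varphi}\|_{L^2(Q)} + \|y_0 e^{-s\hat\varphi(0)}\|_{L^2(0,1)}\bigr)\|v\|_{\mathcal{H}}$, so $\ell\in \mathcal{H}^*$. Lax--Milgram then produces a unique $\bar v\in \mathcal{H}$ satisfying $\kappa(\bar v,v)=\ell(v)$ for all $v\in\mathcal{H}$, with the analogous quantitative bound on $\|\bar v\|_{\mathcal{H}}$. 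Defining $(\bar y,\bar u)$ by the dual formulas, the identity $\|\bar v\|_{\mathcal H}^2 = \int\!\!\!\!\!\int_Q \bar y^2 e^{-2s\tilde\Phi}\,dx\,dt + \int\!\!\!\!\!\int_{Q_\omega} s^{-3}\nu^{-3}\bar u^2 e^{-2s\tilde\Phi}\,dx\,dt$ yields \eqref{mainestimate_div} directly, and in particular $\bar y\in \mathcal{E}_s$, $\bar u\in L^2(Q)$. To identify $\bar y$ with the weak solution of \eqref{nonhom_problem_div} driven by $\bar u$, I would argue by transposition: testing against the solution of the backward problem with arbitrary right-hand side $h\in L^2(Q)$ and using $\kappa(\bar v,v)=\ell(v)$ together with the definitions of $\bar y,\bar u$ produces the transposition identity characterizing the unique solution, and Proposition \ref{prop-Well-posed_nonhom_div} closes the uniqueness step.

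The main obstacle I expect lies in the choice of $\mathcal{H}_0$ in the (SD) case, where the natural flux condition $(av_x)(t,0)=0$ must be reconciled with the degeneracy and with Carleman weights that concentrate singularity at $x=0$: one must ensure simultaneously that $\mathcal{H}_0\subset \mathcal{V}$ so that \eqref{modcarl_div} actually applies verbatim, and that the abstract completion $\mathcal{H}$ remains within a distribution class where $\mathcal{L}_a^\star\bar v$ makes sense and the transposition identity can be closed despite the loss of classical traces at $x=0$. Apart from this delicate functional-analytic check, the argument is structurally identical to the non-divergence proof already written out.
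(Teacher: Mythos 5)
Your proposal is correct and follows exactly the route the paper takes: the paper proves Theorem \ref{firstresult_div} simply by "arguing as in the non divergence case", i.e.\ by repeating the penalized-functional / Lax--Milgram / transposition scheme of Theorem \ref{Thm_nul_ctrl_nonhomog_nondiv} with the divergence operator, the unweighted $L^2$ spaces, the weights $\tilde\varphi,\hat\varphi,\check\varphi$, and Lemma \ref{modifiedcarl_div} in place of Lemma \ref{lemma_modifiedcarl_nondiv}. Your closing caveat about the choice of $\mathcal{H}_0$ under the (SD) flux condition is a sensible point of care, but it does not alter the structure of the argument, which matches the paper's.
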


Also in this case, notice that $y$ being in the space $\mathcal{E}_s$, we readily have
$$
\int\!\!\!\!\!\int_{Q} y^2 e^{-2s\tilde{\Phi}}\,dx\,dt<\infty.
$$
Since the weight $\tilde{\Phi}$ blows up as $t \rightarrow T^{-}$, the boundedness of the above integral yields $y(T,x)=0$.

\section{Null controllability for the nonlocal problem \eqref{problem_nondiv}}\label{section_null_nonlocal_nondiv}

The goal of this section is to study the null controllability property for the nonlocal degenerate equation \eqref{problem_nondiv}. Such a result relies on the null controllability result for the nonhomogeneous system \eqref{nonhom_problem}. First of all, we choose the parameters $\rho$, $\lambda$ and $\beta$ such that
\begin{equation}\label{crucialchoice}
\rho > \frac{\ln 2}{\|\sigma\|_{\infty}}, \quad\lambda \in \left[\frac{e^{2 \rho \|\sigma\|_{\infty}} -1}{(\beta - 1)\|p\|_{L^\infty(0,1)}}, \frac{2\big( e^{2 \rho \|\sigma\|_{\infty}} - e^{\rho \|\sigma\|_{\infty}} \big)}{(\beta - 1)\|p\|_{L^\infty(0,1)}}\right)\quad \text{and}\quad \beta=4.
\end{equation}
\begin{remark}Observe that, since the parameter $\rho$ in \eqref{weightfunc_nondeg} is such that
$\rho > \frac{\ln 2}{\|\sigma\|_{\infty}}, \quad $
one can show that the interval 
$$\displaystyle \left[\frac{e^{2 \rho \|\sigma\|_{\infty}} - 1}{(\beta - 1)\|p\|_{L^\infty(0,1)}}, \, \frac{2( e^{2 \rho \|\sigma\|_{\infty}} - e^{ \rho \|\sigma\|_{\infty}})}{(\beta - 1) \|p\|_{L^\infty(0,1)}}\right)$$ 
is nonempty. 
\end{remark}
Thus, one can show the following lemma which is needed in the sequel.
\begin{lemma}\label{cruciallemma} Let $T^* = (1+\varepsilon)\frac{T}{2}$ with $\varepsilon \in \left(0, \sqrt{1- \frac{2\sqrt{2}}{3}}\right)$. Then
\begin{equation}\label{crucialinequality}
2(\hat{\phi}(0)-\check{\phi}(T^*)+\hat{\Phi}(0))<0.
\end{equation}
\end{lemma}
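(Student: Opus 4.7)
The plan is to reduce the claim to an elementary algebraic inequality involving $\varepsilon$, by computing $\hat\phi(0)$, $\check\phi(T^*)$ and $\hat\Phi(0)$ explicitly and then invoking the upper bound on $\lambda$ given in \eqref{crucialchoice}.

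First, I will exploit monotonicity: since $\psi$ is increasing on $[0,1]$ with $\psi(0)=-\beta\lambda\|p\|_{L^{\infty}(0,1)}$ and $\psi(1)=(1-\beta)\lambda\|p\|_{L^{\infty}(0,1)}$, and since $\nu(t)>0$, one has
\[
\hat\phi(t)=\nu(t)\psi(1)=-(\beta-1)\lambda\|p\|_{L^\infty(0,1)}\nu(t),\qquad
\check\phi(t)=\nu(t)\psi(0)=-\beta\lambda\|p\|_{L^\infty(0,1)}\nu(t).
\]
Similarly, because $\Psi(x)=e^{\rho\sigma(x)}-e^{2\rho\|\sigma\|_\infty}$ attains its maximum at a point where $\sigma$ reaches $\|\sigma\|_\infty$,
\[
\hat\Phi(t)=\nu(t)\bigl(e^{\rho\|\sigma\|_\infty}-e^{2\rho\|\sigma\|_\infty}\bigr).
\]
Next, I plug in $\nu(0)=\theta(T/2)=16/T^{4}$ and $\nu(T^*)=\theta(T^*)=16/[(1-\varepsilon^{2})^{2}T^{4}]$, using $T^*=(1+\varepsilon)T/2$ and $T-T^*=(1-\varepsilon)T/2$.

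Setting $K:=16/T^{4}$, $L:=\|p\|_{L^\infty(0,1)}$, $M:=\|\sigma\|_\infty$, the expression to be shown negative becomes (with $\beta=4$)
\[
\hat\phi(0)-\check\phi(T^*)+\hat\Phi(0)
=K\!\left\{\lambda L\!\left[\frac{4}{(1-\varepsilon^{2})^{2}}-3\right]+\bigl(e^{\rho M}-e^{2\rho M}\bigr)\right\}.
\]
Since $(1-\varepsilon^{2})^{2}<1$, the bracket $\frac{4}{(1-\varepsilon^{2})^{2}}-3$ is strictly positive, so I can majorize $\lambda$ by its upper bound in \eqref{crucialchoice}, namely $\lambda\le\dfrac{2(e^{2\rho M}-e^{\rho M})}{3L}$. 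This yields
\[
\hat\phi(0)-\check\phi(T^*)+\hat\Phi(0)\le K\bigl(e^{2\rho M}-e^{\rho M}\bigr)\!\left[\frac{2}{3}\!\left(\frac{4}{(1-\varepsilon^{2})^{2}}-3\right)-1\right]
=K\bigl(e^{2\rho M}-e^{\rho M}\bigr)\!\left[\frac{8}{3(1-\varepsilon^{2})^{2}}-3\right].
\]

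Because $e^{2\rho M}-e^{\rho M}>0$, it remains only to show $\dfrac{8}{3(1-\varepsilon^{2})^{2}}-3<0$, which is equivalent to $(1-\varepsilon^{2})^{2}>\frac{8}{9}$, i.e.\ $1-\varepsilon^{2}>\frac{2\sqrt{2}}{3}$, i.e.\ $\varepsilon<\sqrt{1-\frac{2\sqrt{2}}{3}}$, which is precisely the hypothesis on $\varepsilon$. Multiplying by $2$ gives \eqref{crucialinequality}. The only non-routine steps are the correct identification of the max/min of $\psi$ and $\Psi$ and the careful use of the specific upper endpoint of $\lambda$ in \eqref{crucialchoice}; all remaining computations are arithmetic.
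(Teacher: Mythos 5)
Your proof is correct and follows essentially the same route as the paper: both arguments hinge on the upper endpoint of the $\lambda$-interval in \eqref{crucialchoice} (which the paper phrases as $2\hat{\Phi}(0)\leq\hat{\phi}(0)$), the explicit values $\psi(0)$, $\psi(1)$, $\max_x\Psi$, and the computation of $\nu(0)$ and $\nu(T^*)$, reducing everything to the same arithmetic condition $(1-\varepsilon^2)^2>8/9$. The only difference is organizational — you substitute the bound on $\lambda$ directly into the full expression rather than first absorbing $2\hat{\Phi}(0)$ into $\hat{\phi}(0)$ — which does not change the substance.
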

\begin{proof}
Since the constant $\lambda\in  \Big[\frac{e^{2 \rho \|\sigma\|_{\infty}} -1}{(\beta - 1)\|p\|_{L^\infty(0,1)}}, \frac{2\big( e^{2 \rho \|\sigma\|_{\infty}} - e^{\rho \|\sigma\|_{\infty}} \big)}{(\beta - 1)\|p\|_{L^\infty(0,1)}}\Big)$, we immediately get:
\begin{equation}\label{compar_max_min}
2 \hat{\Phi}(t) \leq \hat{\phi}(t) \qquad \text{for every}\quad t\in (0,T).
\end{equation} 
On the other hand, using the fact that $\beta =4$, from \eqref{compar_max_min}, it comes that
\begin{align*}
2(\hat{\phi}(0)-\check{\phi}(T^*)+\hat{\Phi}(0))& \leq  3\hat{\phi}(0)- 2\check{\phi}(T^*) \\
& = - 3 (\beta - 1) \lambda \|p\|_{L^\infty(0,1)} \nu(0) + 2 \beta \lambda \|p\|_{L^\infty(0,1)} \nu(T^*)\\
&= \lambda \|p\|_{L^\infty(0,1)} (8 \nu(T^*)-  9 \nu(0)).
\end{align*}
Finally, by the definition of $T^*$, one can show that
\begin{align*}
8 \nu(T^*)-  9\nu(0) < 0,
\end{align*}
and the claim follows.
\end{proof}
In the following, in order to obtain our first main null controllability result for \eqref{problem_nondiv}, we make the following assumption on the kernel $K$.
\begin{hypothesis}\label{Hypoth_K_nondiv} Assume that the kernel $K$ is such that
\begin{equation}\label{condition_K_nondiv}
e^{\frac{c_0s}{(T-t)^2}} \int_{0}^{1}\int_{0}^{1}  \frac{K^2(t,x,\tau)}{a(x)}\,d\tau\,dx \in L^{\infty}(0,T),
\end{equation}
where $c_0:=8\lambda\|p\|_{L^\infty(0,1)} (4/T)^2 $, $s\ge s_0$ is fixed and $s_0$ is the same of Theorem \ref{Thm_nul_ctrl_nonhomog_nondiv}.

\end{hypothesis}

\begin{remark}
The assumption \eqref{condition_K_nondiv} will play a crucial role in the proof of the null controllability of \eqref{problem_nondiv}.
Observe that, using Remark \ref{remark_WSD}, thanks to this hypothesis, the kernel $K$, as a function of $t$ and $x$, should behave like
$$ K(t,x,\cdot) \sim e^{\frac{-C}{(T-t)^2}}$$
when $\frac{1}{a} \in L^1(0,1)$ and
$$ K(t,x,\cdot) \sim e^{\frac{-C}{(T-t)^2}} a^{1/4}(x)$$
when $\frac{1}{\sqrt a} \in L^1(0,1)$.
\end{remark}
The starting point to prove the null controllability property for the nonlocal degenerate equation \eqref{problem_nondiv} is to show the null controllability for the following control system
\begin{equation}\label{problem20_nondiv}
\left\{
\begin{array}{ll}
\displaystyle y_t - ay_{xx} + \int_{0}^{1} K(t,x,\tau)w(t,\tau)\,d\tau  = 1_{\omega} u, & (t,x) \in Q,\\
y (t, 0) = y (t, 1) =0,& t \in (0,T),
\\
y(0,x)=y_0(x), & x \in (0,1),
\end{array}
\right.
\end{equation}
for any $
w \in E_{s,M}:=\big\{w \in E_{s}|\quad \|e^{-s\tilde{\Phi}}w\|_{L_{\frac{1}{a}}^2(Q)}\leq M \big\},$
where $M>0$ is an arbitrary constant and $s$ is the one given in Hypothesis \ref{Hypoth_K_nondiv}. More precisely, as a first step we prove that this system is null controllable under the condition \eqref{condition_K_nondiv}; then, as a second step, we deduce the null controllability for the original problem applying a classical fixed point argument.

Notice that $E_{s,M}$  is a non empty, bounded, closed, and convex subset of $L_{\frac{1}{a}}^2(Q)$.

Our first main result is the following:
\begin{theorem}\label{thm_nul_ctrl_nonloc_nondiv}
Assume Hypotheses \ref{Hypoth_2_nondiv} and \ref{Hypoth_K_nondiv}. Then for any $y_0 \in H_{\frac{1}{a}}^1(0,1)$, there exists a control function 
$u\in L_{\frac{1}{a}}^2(Q)$ such that the associated solution $y\in \mathcal{Z}$ of \eqref{problem_nondiv} satisfies
\begin{equation}\label{ncresult_nondiv}
y(T,\cdot)=0 \qquad \text{in}\quad (0,1).
\end{equation}
\end{theorem}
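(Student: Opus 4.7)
The plan is to reduce the nonlocal problem to the nonhomogeneous problem studied in Section \ref{section_null_nonhomo}, by treating the integral term as a source depending on a prescribed function $w$, and then to recover a fixed point of the resulting set-valued map via Kakutani's theorem. Concretely, for each $w \in E_{s,M}$ we set
\[
f_w(t,x) := -\int_0^1 K(t,x,\tau)\,w(t,\tau)\,d\tau,
\]
and we consider the auxiliary nonhomogeneous system \eqref{problem20_nondiv} as an instance of \eqref{nonhom_problem} with right-hand side $f=f_w$. The first step will be to verify that $e^{-s\tilde\phi}f_w \in L^2_{\frac{1}{a}}(Q)$: Cauchy-Schwarz on the $\tau$-integral gives
\[
\int_0^1 \frac{f_w^{2}}{a}\,dx \;\le\; \Big(\int_0^1\!\!\int_0^1 \frac{K^{2}(t,x,\tau)}{a(x)}\,d\tau\,dx\Big)\Big(\int_0^1 w^{2}(t,\tau)\,d\tau\Big),
\]
so that, using Hypothesis \ref{Hypoth_K_nondiv} together with the definition of $c_0$ and the estimate $-2s\tilde\phi \le 2s\hat\phi(0) \le 2s\cdot 4\lambda\|p\|_{L^\infty}\theta(T/2)$, one obtains $\|e^{-s\tilde\phi}f_w\|_{L^2_{\frac{1}{a}}(Q)}^{2} \le C \|e^{-s\tilde\Phi}w\|_{L^2_{\frac{1}{a}}(Q)}^{2} \le CM^{2}$, for a constant $C$ depending only on the $L^\infty$-bound in \eqref{condition_K_nondiv}.

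Having this, Theorem \ref{Thm_nul_ctrl_nonhomog_nondiv} produces, for each such $w$, a nonempty set of admissible pairs $(y,u) \in E_s \times L^2_{\frac{1}{a}}(Q)$ driving \eqref{problem20_nondiv} to rest at $t=T$, together with the estimate \eqref{mainestimate_nondiv}. Define the set-valued map
\[
\Lambda : E_{s,M} \rightrightarrows L^2_{\frac{1}{a}}(Q), \qquad \Lambda(w) := \{\,y \in E_s : y \text{ solves \eqref{problem20_nondiv} with some control } u, \; y(T,\cdot)=0\,\}.
\]
Applying \eqref{mainestimate_nondiv} with $f=f_w$ and the bound just derived yields
\[
\|e^{-s\tilde\Phi}y\|_{L^2_{\frac{1}{a}}(Q)}^{2} \;\le\; C\,e^{2s[\hat\phi(0)-\check\phi(T^{*})]}\Big(C'e^{2s\hat\Phi(0)}M^{2} + \|y_0 e^{-s\hat\phi(0)}\|_{L^2_{\frac{1}{a}}(0,1)}^{2}\Big),
\]
where the exponent $2s[\hat\phi(0)-\check\phi(T^{*})+\hat\Phi(0)]$ in front of $M^{2}$ is negative by Lemma \ref{cruciallemma}. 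Hence, choosing $s$ large enough so that this exponential factor is smaller than $1/(2CC')$ and then choosing $M$ so that the term depending on $y_0$ is bounded by $M^{2}/2$, we conclude that $\Lambda(E_{s,M}) \subseteq E_{s,M}$. This self-mapping property is, in my view, the main quantitative content of the proof, and it is exactly where the precise balance between the exponential decay imposed on $K$ and the exponential blow-up from the Carleman-based control cost intervenes.

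It then remains to verify the hypotheses of Kakutani's fixed point theorem in the space $L^2_{\frac{1}{a}}(Q)$ endowed with its weak topology. The set $E_{s,M}$ is convex, closed and bounded, hence weakly compact. For each $w$, $\Lambda(w)$ is convex (by linearity of \eqref{problem20_nondiv} and convexity of the set of admissible controls satisfying \eqref{mainestimate_nondiv}) and weakly closed (as a closed and convex set in the weak topology). Upper semicontinuity of $\Lambda$ with respect to weak convergence follows from a standard argument: if $w_n \rightharpoonup w$ in $E_{s,M}$ and $y_n \in \Lambda(w_n)$ with $y_n \rightharpoonup y$, then the corresponding controls $u_n$ are uniformly bounded in $L^2_{\frac{1}{a}}(Q_\omega)$ by \eqref{mainestimate_nondiv}, so up to a subsequence $u_n \rightharpoonup u$; passing to the limit in the weak formulation of \eqref{problem20_nondiv} (the nonlocal term passes thanks to $K \in L^\infty$ and weak convergence of $w_n$) and in the terminal condition $y_n(T,\cdot)=0$ (which survives weak limits in $E_s$ because of the integrability gained from the weight $e^{-s\tilde\Phi}$), we obtain $y \in \Lambda(w)$. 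By Kakutani's theorem, $\Lambda$ has a fixed point $y \in E_{s,M}$, i.e.\ $y \in \Lambda(y)$. The pair $(y,u)$ associated to this fixed point then solves \eqref{problem_nondiv} and satisfies $y(T,\cdot)=0$, proving \eqref{ncresult_nondiv}; the additional regularity $y \in \mathcal Z$ coming from $y_0 \in H^{1}_{\frac{1}{a}}(0,1)$ is provided by Proposition \ref{prop-Well-posed_nonhom_nondiv}.
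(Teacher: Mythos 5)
Your proposal follows the same overall strategy as the paper: freeze $w\in E_{s,M}$, treat the nonlocal term as a source $f_w$ for the nonhomogeneous system \eqref{problem20_nondiv}, invoke Theorem \ref{Thm_nul_ctrl_nonhomog_nondiv}, prove the invariance $\Lambda(E_{s,M})\subseteq E_{s,M}$ via estimate \eqref{mainestimate_nondiv} and the sign condition of Lemma \ref{cruciallemma}, and close with Kakutani. The one genuine difference is in how you verify the compactness and continuity hypotheses of the fixed point theorem: you work with the \emph{weak} topology of $L^2_{\frac{1}{a}}(Q)$, using weak compactness of the bounded closed convex set $E_{s,M}$ and weak continuity of the linear map $w\mapsto \int_0^1 K(\cdot,\cdot,\tau)w(\cdot,\tau)\,d\tau$ to pass to the limit, whereas the paper works with the strong $L^2(Q)$ topology and obtains relative compactness of the image from the $\mathcal Z$-bound on the solutions together with the Aubin--Lions-type embedding of Theorem \ref{thm_compact_generale}. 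Your route dispenses with the compactness theorem entirely and is legitimate here precisely because the dependence on $w$ is linear (for the weak-topology version one should cite the Kakutani--Fan--Glicksberg form of the theorem for locally convex spaces rather than the finite-dimensional or norm-topology statement); the paper's strong-compactness route is the one that would survive a nonlinear dependence on $w$ and also yields strong convergence of the approximating solutions, which it reuses in the upper-semicontinuity step.

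One displayed inequality in your first step is wrong as written: you claim $-2s\tilde\phi \le 2s\hat\phi(0) \le 2s\cdot 4\lambda\|p\|_{L^\infty}\theta(T/2)$, but $-2s\tilde\phi\ge 0$ while $\hat\phi(0)<0$, and $\theta(T/2)$ is a \emph{lower} bound for $\nu$, not an upper bound. The correct estimate is $-2s\tilde\phi(t,x)\le 2s\beta\lambda\|p\|_{L^\infty(0,1)}\nu(t)\le \frac{c_0 s}{(T-t)^2}$ with $\beta=4$ and $c_0$ as in Hypothesis \ref{Hypoth_K_nondiv}; this is exactly what that hypothesis is calibrated to absorb, so the conclusion $\|e^{-s\tilde\phi}f_w\|^2_{L^2_{\frac{1}{a}}(Q)}\le CM^2$ stands once the line is repaired. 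The rest of the argument, including the choice of $s$ then $M$ in the invariance step and the automatic vanishing $y(T,\cdot)=0$ for elements of $E_s$, matches the paper.
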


\begin{proof}
Take $w \in E_{s, M}$; recalling the definition of $\tilde \phi$, by \eqref{condition_K_nondiv}, one has
\begin{align}\label{estim1_nodiv}
&\int\!\!\!\!\!\int_{Q} \frac{e^{-2s\tilde{\phi}}}{a}\Big(\int_{0}^{1} K(t,x,\tau)w(t,\tau)\,d\tau \Big)^2 \,dx\,dt\notag\\
&= \int\!\!\!\!\!\int_{Q} e^{\frac{c_0 s}{(T-t)^2}} \Big( \int_{0}^{1}  \frac{K(t,x,\tau)}{\sqrt{a(x)}} w(t,\tau)\,d\tau\Big)^2\,dx\,dt\notag \\
&\leq \max\limits_{\tau\in(0,1)} a(\tau) \int\!\!\!\!\!\int_{Q} e^{\frac{c_0 s}{(T-t)^2}} \Big( \int_{0}^{1}  \frac{K^2(t,x,\tau)}{a(x)}\,d\tau  \Big) \Big( \int_{0}^{1} \frac{w^2(t,\tau)}{a(\tau)}\,d\tau \Big) \,dx\,dt \notag \\
&\leq \max\limits_{\tau\in(0,1)} a(\tau) \sup\limits_{t\in (0,T)} \Big(  e^{\frac{c_0 s}{(T-t)^2}} \int_{0}^{1} \int_{0}^{1} \frac{K^2(t,x,\tau)}{a(x)}\,d\tau\, dx  \Big) \Big( \int\!\!\!\!\!\int_{Q} \frac{w^2(t,\tau)}{a(\tau)}\,d\tau\,dt \Big) \notag \\
&\leq C \int\!\!\!\!\!\int_{Q} \frac{w^2(t,x)}{a(x)}\,dx\,dt.
\end{align}
Hence, using the fact that $e^{2s\tilde{\Phi}} \leq 1$ in $Q$, we obtain
\begin{align*}
&\int\!\!\!\!\!\int_{Q}  \frac{e^{-2s\tilde{\phi}}}{a}\Big(\int_{0}^{1} K(t,x,\tau)w(t,\tau)\,d\tau\Big)^2 \,dx\,dt\notag\\
&\leq C  \int\!\!\!\!\!\int_{Q} e^{-2s\tilde{\Phi}} \frac{w^2}{a}\,dx\,dt \leq C M^2<+\infty.
\end{align*}
Thus, setting $f:= - \int_{0}^{1} K(t,x,\tau)w(t,\tau)\,d\tau$, one has that $ e^{-s\tilde{\phi}}f  \in L_{\frac{1}{a}}^2(Q)$ and, by Theorem \ref{Thm_nul_ctrl_nonhomog_nondiv}, we infer that system \eqref{problem20_nondiv} is null controllable, that is, $\forall\; y_0 \in H_{\frac{1}{a}}^1(0,1)$, there exists a control function $u \in L_{\frac{1}{a}}^2(Q)$ such that the associated solution $y$ of \eqref{problem20_nondiv} satisfies
$y(T, \cdot)=0$ in $(0,1)$. In addition, in this case, the control $u$ satisfies the estimate
\begin{equation*}
\int\!\!\!\!\!\int_{Q_{\omega}}  s^{-3} \nu^{-3} \frac{u^2}{a} e^{-2s\tilde{\Phi}}\,dx\,dt \leq C e^{2s[\hat{\phi}(0)-\check{\phi}(T^*)]} \Big(M^2 + \|y_0e^{-s\hat{\phi}(0)}\|_{L_{\frac{1}{a}}^2(0,1)}^2\Big),
\end{equation*}
where $T^*$ is defined in Lemma \ref{cruciallemma}.

Next, we aim to extend this controllability result to the nonlocal problem \eqref{problem_nondiv} through the Kakutani's fixed point Theorem (see for instance \cite[Theorem 2.3]{cara}). 
For any $w\in E_{s,M}$, consider the mapping $B: E_{s,M} \rightarrow 2^{E_{s}}$ defined by
\begin{align*}
B(w)=\displaystyle\Big\{&y\in E_s: y \text{ satisfies \eqref{problem20_nondiv} is} \; \text{such that} \;\;  y(T, \cdot)=0 \; \text{in} \; (0,1)  \; \text{and} \; \exists \; u \in L_{\frac{1}{a}}^2(Q)\; \text{so that} \\
&\qquad\int\!\!\!\!\!\int_{Q_{\omega}}  s^{-3} \nu^{-3} \frac{u^2}{a} e^{-2s\tilde{\Phi}}\,dx\,dt \leq C e^{2s[\hat{\phi}(0)-\check{\phi}(T^*)]} \Big(M^2 + \|y_0e^{-s\hat{\phi}(0)}\|_{L_{\frac{1}{a}}^2(0,1)}^2\Big)\Big\}.
\end{align*}
In the following, we are going to prove that $B$ has at least one fixed point in $E_{s,M}$. To do so, we will check that all the conditions to apply the aforementioned Theorem, with respect to the  $L^2$ topology, are fulfilled.


First of all, observe that $B(w)$ is nonempty, closed and convex in  $L_{\frac{1}{a}}^2(Q)$.
Next, let us prove that $B(E_{s,M}) \subset E_{s,M}$ for a sufficiently large $M$. Proceeding as in \eqref{estim1_nodiv}, using \eqref{mainestimate_nondiv} and \eqref{condition_K_nondiv}, we immediately find
\begin{align*}
&\int\!\!\!\!\!\int_{Q} \frac{y^2}{a} e^{-2s\tilde{\Phi}}\,dx\,dt +
\int\!\!\!\!\!\int_{Q_{\omega}}  s^{-3} \nu^{-3} \frac{u^2}{a} e^{-2s\tilde{\Phi}}\,dx\,dt \notag\\
&\leq   C e^{2s[\hat{\phi}(0)-\check{\phi}(T^*)]} \Big( \int\!\!\!\!\!\int_{Q} \frac{w^2}{a}\,dx\,dt + \|y_0e^{-s\hat{\phi}(0)}\|_{L_{\frac{1}{a}}^2(0,1)}^2\Big).
\end{align*}
Moreover, since $\tilde \Phi \leq \hat\Phi(0)$ in $Q$, it follows that
\begin{align}\label{estim_a}
&\int\!\!\!\!\!\int_{Q} \frac{y^2}{a} e^{-2s\tilde{\Phi}}\,dx\,dt +
\int\!\!\!\!\!\int_{Q_{\omega}}  s^{-3} \nu^{-3} \frac{u^2}{a} e^{-2s\tilde{\Phi}}\,dx\,dt \notag\\
&\leq  C e^{2s[\hat{\phi}(0)-\check{\phi}(T^*)+\hat{\Phi}(0)]} \Big( \int\!\!\!\!\!\int_{Q} e^{-2s\tilde{\Phi}} \frac{w^2}{a}\,dx\,dt \Big) + C e^{-2s\check{\phi}(T^*)} \|y_0\|_{L_{\frac{1}{a}}^2(0,1)}^2\notag\\
&\leq  C e^{2s[\hat{\phi}(0)-\check{\phi}(T^*)+\hat{\Phi}(0)]} M^2 
+ C e^{-2s\check{\phi}(T^*)} \|y_0\|_{L_{\frac{1}{a}}^2(0,1)}^2.
\end{align}
On the other hand, thanks to \eqref{crucialinequality} and by increasing the parameter $s_0$ if necessary, we can fix $s$ large enough so that
\begin{equation*}
C e^{2s[\hat{\phi}(0)-\check{\phi}(T^*)+\hat{\Phi}(0)]}<\frac{1}{2}.
\end{equation*}
Hence, for $M$ sufficiently large, we deduce that
\begin{align}\label{estim_b}
\int\!\!\!\!\!\int_{Q} \frac{y^2}{a} e^{-2s\tilde{\Phi}}\,dx\,dt &+
\int\!\!\!\!\!\int_{Q_{\omega}}  s^{-3} \nu^{-3} \frac{u^2}{a} e^{-2s\tilde{\Phi}}\,dx\,dt \notag\\
&  \leq   \frac{M^2}{2} + C e^{-2s\check{\phi}(T^*)} \|y_0\|_{L_{\frac{1}{a}}^2(0,1)}^2 \leq M^2.
\end{align}
Thus, $B$ maps $E_{s,M}$ into  itself, i.e., $B(E_{s,M}) \subset E_{s,M}$.

Now, let $\{w_n\}$ be a sequence in $E_{s,M}$. Then, by Proposition \ref{prop-Well-posed_nonhom_nondiv}, the associated solutions $\{y_n\}$ are bounded in $\mathcal{Z}$, and hence, by Theorem \ref{thm_compact_generale}, $B(E_{s,M})$ is relatively compact in $L^2(Q)$.

It remains to verify that, $B$ is upper-semicontinuous. At first,
note that, for any $w\in E_{s,M}$, one can find at least a control function $u \in L^2(Q)$ such that the associated solution $y$ belongs to $E_{s,M}$. Thus, for any $\{w_n\}$, we can find a sequence of controls $\{u_n\} \in L^2(Q)$ such that the associated solutions $\{y_n\}$ belongs to $L^2(Q)$.
Let $w_n \rightarrow w$ in $ E_{s,M}$ and $ y_n \in B(w_n)$ such that $y_n \rightarrow y$ in $L^2(Q)$. Our aim is to prove that $ y \in B(w)$.

Let $u_n$ be the sequence corresponding to the control function. Thus, by Proposition \ref{prop-Well-posed_nonhom_nondiv} and \eqref{estim_b}, it comes that on a subsequence (again denoted as $n$) we have the following convergences:
\begin{align*}
u_n \rightarrow u \quad &\text{weakly in} \; L^2(Q), \\
y_n \rightarrow \hat y \quad &\text{weakly in} \;L^2(0, T; H_{\frac{1}{a}}^{2}(0,1))\cap H^1(0, T; L_{\frac{1}{a}}^2(0,1))\\
& \text{since} \; \mathcal{Z}\, \text{is continuously embedded in} \; L^2(0, T; H_{\frac{1}{a}}^{2}(0,1))\cap H^1(0, T; L_{\frac{1}{a}}^2(0,1)),\\
&\text{strongly in\;} C([0,T]; L_{\frac{1}{a}}^2(0,1)),
\end{align*}
thanks to Theorem \ref{thm_compact_generale}.

Hence, we get that $y = \hat y \in L_{\frac{1}{a}}^2(Q)$, and passing to the limit in the following system
\begin{equation}\label{sys_n}
\left\{
\begin{array}{lll}
\displaystyle y_{n,t} - a y_{n,xx} + \int_{0}^{1} K(t,x,\tau)w_n(t,\tau)\,d\tau = 1_{\omega} u_n, &  & (t, x) \in Q,\\
y_n(t, 0)= y_n(t,1)= 0, & & t \in (0,T),  \\
y_n(0,x)= y_{0}(x),  & & x \in  (0,1)
\end{array}
\right.
\end{equation}
we deduce that the pair $(u, y)$ also satisfies system \eqref{problem20_nondiv}. Therefore, $B$ is upper semicontinuous and satisfies the assumptions of the Kakutani's fixed point Theorem. Hence, we can deduce  that it exists at least one $y$ such that $y\in B(y)$.
By definition of the mapping  $B$, this yields that there exists at least one pair $(u, y)$ satisfying the conditions of Theorem \ref{thm_nul_ctrl_nonloc_nondiv}.
Hence the claim follows.
\end{proof}
\begin{remark}
Recalling that the weight function $\tilde{\Phi}$ goes to $-\infty$ at $t=T$, the inequality \eqref{estim_b} implies that the control function $u$ that drives the solution of \eqref{problem_nondiv} to rest decay at the end of the time horizon $[0,T]$.
\end{remark}

As a consequence of the previous theorem and proceeding as in \cite{Allal2020}, or in \cite{AFS2020} one has the next result.
\begin{theorem}\label{Thm_null_Control_nonlocal_2}
Assume Hypotheses \ref{Hypoth_2_nondiv} and \ref{Hypoth_K_nondiv}.
Then, for any $y_0 \in L^2_{\frac{1}{a}}(0,1)$, there exists a control function $u\in L_{\frac{1}{a}}^2(Q)$ such that the associated solution $y\in \mathcal{W}$ of \eqref{problem_nondiv} satisfies
\begin{equation}\label{ncresult_nondiv'}
y(T,\cdot)=0 \qquad \text{in}\quad (0,1).
\end{equation}
\end{theorem}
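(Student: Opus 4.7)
The plan is to combine Theorem \ref{thm_nul_ctrl_nonloc_nondiv} with a regularization argument that exploits the smoothing effect of the parabolic operator on the initial datum. The idea is to split the time interval into $[0,T/2]$ and $[T/2,T]$: on the first half we let the equation evolve without control so that the solution becomes more regular, and on the second half we apply the already established controllability result for regular data.

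More precisely, given $y_0 \in L^2_{\frac{1}{a}}(0,1)$, I would first consider, on $[0,T/2]$, the solution $y_1$ of \eqref{problem_nondiv} with control $u \equiv 0$ and initial datum $y_0$. By Theorem \ref{Thm_wellposed_nonloc_nondiv}, such a solution exists and belongs to $C([0,T/2]; L^2_{\frac{1}{a}}(0,1)) \cap L^2(0,T/2; H^1_{\frac{1}{a}}(0,1))$. Since the integral term $\int_0^1 K(t,x,\tau) y_1(t,\tau)\, d\tau$, under Hypothesis \ref{Hypoth_K_nondiv}, can be regarded as an element of $L^2_{\frac{1}{a}}(0,T/2;L^2_{\frac{1}{a}}(0,1))$ (as shown in \eqref{estim1_nodiv}), the function $y_1$ actually solves a nonhomogeneous degenerate heat equation of the form \eqref{nonhom_problem} with source $f = -\int_0^1 K(\cdot,\cdot,\tau) y_1(\cdot,\tau)\, d\tau \in L^2_{\frac{1}{a}}(Q)$. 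Standard parabolic regularity for the nonhomogeneous degenerate problem (Proposition \ref{prop-Well-posed_nonhom_nondiv}, applied backward from $T/2$ or through a Lebesgue-point argument) then yields that for a.e.\ $t_0 \in (0,T/2)$, and in particular at $t_0 = T/2$, we have $y_1(t_0,\cdot) \in H^1_{\frac{1}{a}}(0,1)$.

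Having selected such a time $t_0 \in (0,T/2)$ with $y_1(t_0,\cdot) \in H^1_{\frac{1}{a}}(0,1)$, I would then apply Theorem \ref{thm_nul_ctrl_nonloc_nondiv} on the interval $[t_0,T]$, with initial datum $y_1(t_0,\cdot)$, obtaining a control $\tilde{u} \in L^2_{\frac{1}{a}}((t_0,T)\times(0,1))$ and a solution $y_2 \in \mathcal{Z}$ on $[t_0,T]$ driving the state to zero at time $T$. Define
\[
u(t,x) := \begin{cases} 0, & t \in (0,t_0), \\ \tilde{u}(t,x), & t \in (t_0,T), \end{cases}
\qquad
y(t,x) := \begin{cases} y_1(t,x), & t \in (0,t_0), \\ y_2(t,x), & t \in (t_0,T). \end{cases}
\]
By the uniqueness part of Theorem \ref{Thm_wellposed_nonloc_nondiv}, the matching $y_2(t_0,\cdot) = y_1(t_0,\cdot)$ ensures that $y$ is the weak solution of \eqref{problem_nondiv} on the whole $[0,T]$ associated to the control $u$ and initial datum $y_0$, and it belongs to $\mathcal{W}$ since both pieces do. Moreover $y(T,\cdot) = y_2(T,\cdot) = 0$.

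The main obstacle I expect is the rigorous justification of the regularization step, i.e.\ selecting a time $t_0$ at which $y_1(t_0,\cdot)$ gains the $H^1_{\frac{1}{a}}$ regularity required by Theorem \ref{thm_nul_ctrl_nonloc_nondiv}. In the local setting this is a standard consequence of parabolic smoothing, but here one must keep track of the weighted spaces and of the nonlocal term. The key technical point is that, thanks to Hypothesis \ref{Hypoth_K_nondiv}, the nonlocal term behaves like an $L^2_{\frac{1}{a}}$ source, so one can invoke the second part of Proposition \ref{prop-Well-posed_nonhom_nondiv}, combined with the fact that $y_1 \in L^2(0,T/2; H^1_{\frac{1}{a}}(0,1))$ implies $y_1(t,\cdot) \in H^1_{\frac{1}{a}}(0,1)$ for a.e.\ $t$, to pick a suitable $t_0$. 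The remaining verifications (that the glued pair $(y,u)$ is a weak solution and that $y(T,\cdot) = 0$) are routine, exactly as in \cite{Allal2020, AFS2020}.
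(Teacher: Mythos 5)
Your proposal is correct and follows essentially the same route as the paper: run the uncontrolled nonlocal equation on an initial time interval, use the fact that the weak solution lies in $L^2(0,T/2;H^1_{\frac{1}{a}}(0,1))$ to select a time $t_0$ at which the state has the $H^1_{\frac{1}{a}}$ regularity required by Theorem \ref{thm_nul_ctrl_nonloc_nondiv}, apply that theorem on $(t_0,T)$, and glue the two pieces. The only slight overreach is the parenthetical claim that one may take $t_0=T/2$ specifically (the a.e.\ selection gives some $t_0\in(0,T/2)$, which is all the paper uses and all your construction needs).
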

\begin{proof}
Consider the following parabolic problem:
\begin{equation*}
\left\{
\begin{array}{lll}
w_t - aw_{xx} + \int_{0}^{1} K(t,x,\tau)w(t,\tau)\,d\tau  = 0,  &  & (t, x) \in \left(0, \ds\frac{T}{2}\right)\times (0, 1), \\
w(t, 0) = w(t,1)= 0, & & t \in\left(0, \ds\frac{T}{2}\right),  \\
w(0,x)= y_{0}(x),  & & x \in  (0, 1),
\end{array}
\right.
\end{equation*}
where $y_0\in L^2_{\frac{1}{a}}(0,1)$ is the initial condition in \eqref{problem_nondiv}.

By Theorem \ref{Thm_wellposed_nonloc_nondiv}, the solution of this system belongs to
$$ W^*_T := L^2\left(0, \frac{T}{2}; H_{\frac{1}{a}}^1(0,1)\right)\cap C\left(\left[0, \frac{T}{2}\right]; L_{\frac{1}{a}}^2(0,1)\right).$$
Then, there exists $t_0 \in (0, \frac{T}{2})$ such that
$w(t_0, \cdot) := \tilde{w}(\cdot) \in H_{\frac{1}{a}}^1(0,1)$.

Now, we consider the following controlled parabolic system:
\begin{equation*}
\left\{
\begin{array}{lll}
z_t - az_{xx} + \int_{0}^{1} K(t,x,\tau)z(t,\tau)\,d\tau  = 1_{\omega} h, &  & (t, x) \in (t_0, T) \times (0, 1), \\
z(t, 0) =z(t,1)= 0, & & t \in (t_0, T),  \\
z(t_0,x)= \tilde{w}(x),  & & x \in  (0, 1).
\end{array}
\right.
\end{equation*}
By Theorem \ref{thm_nul_ctrl_nonloc_nondiv} applied in $(t_0,T)\times(0,1)$,
we can see that there exists a control function  $h \in L^2((t_0, T) \times (0, 1))$ such that the associated solution
\begin{equation*}
z \in \mathcal{Z}^*:= L^2(t_0,, T; H_{\frac{1}{a}}^{2}(0,1))\cap H^1(t_0,, T; L_{\frac{1}{a}}^2(0,1))  \cap  C([t_0,,T]; H^1_{\frac{1}{a}}(0,1))
\end{equation*}
satisfies
$$z(T, \cdot) = 0 \qquad \text{in} \; (0, 1).$$
Finally, setting
\begin{align*}
y:=
\left\{
\begin{array}{lll}
w, & \text{in} \quad \big[0, t_0\big],\\
z, & \text{in} \quad \big[t_0, T\big]
\end{array}
\right.
 \quad  \text{and} \quad u:=
\left\{
\begin{array}{lll}
0, & \text{in} \quad \big[0, t_0\big],\\
h, & \text{in} \quad \big[t_0, T\big],
\end{array}
\right.
\end{align*}
one can prove that $y\in \mathcal{W}$ is the solution to the system \eqref{problem_nondiv} corresponding to $u$ and satisfies
$$y(T, \cdot) = 0 \qquad \text{in} \; (0, 1).$$
Hence, our assertion is proved.
\end{proof}

\begin{remark}\label{remark_control}
Notice that, if $\supp K \subset(0,T) \times \omega^2$, the null controllability of  \eqref{problem_nondiv} follows without  assuming Hypothesis \ref{Hypoth_K_nondiv}. Indeed, it is a consequence of the one for the degenerate heat equation
\begin{equation}\label{syst_v}
\left\{
\begin{array}{ll}
\displaystyle y_t - ay_{xx} = 1_{\omega} v, & (t,x) \in Q,\\
y (t, 0) = y (t, 1) =0,& t \in (0,T),
\\
y(0,x)=y_0(x), & x \in (0,1),
\end{array}
\right.
\end{equation}
which is proved in \cite[Theorem 4.5]{CFR2008} (see also Theorem \ref{Thm_nul_ctrl_nonhomog_nondiv} with $f=0$):  given $T>0$ and $y_0 \in L^2_{\frac{1}{a}}(0,1)$ there exists $v \in L_{\frac{1}{a}}^2(Q)$ such that the solution $y$ of \eqref{syst_v} satisfies $y(T,\cdot) = 0$ in $(0,1)$.

Now, taking $ u(t,x) = v(t,x) -  \int_0^1 K(t,x,\tau)y(t,\tau)\,d\tau $ and  recalling that $\supp K \subset(0,T) \times \omega^2$, we have that, the pair $(y,u)$ satisfies
\begin{equation*}
\left\{
\begin{array}{ll}
\displaystyle y_t - ay_{xx} + \int_{0}^{1} K(t,x,\tau)y(t,\tau)\,d\tau  = 1_{\omega} u, & (t,x) \in Q,\\
y (t, 0) = y (t, 1) =0,& t \in (0,T),
\\
y(0,x)=y_0(x), \quad y(T,x)=0  & x \in (0,1),
\end{array}
\right.
\end{equation*}
which yields the null controllability of \eqref{problem_nondiv}.

On the other hand, if the control function is supported in the whole domain, that is, $\omega = (0,1)$, once again the assumption \ref{Hypoth_K_nondiv} is not necessary.
Indeed, as above, we have that,  for any $y_0 \in L^2_{\frac{1}{a}}(0,1)$, there exists $h \in L_{\frac{1}{a}}^2(Q)$ such that the controlled problem
\begin{equation}\label{syst_h}
\left\{
\begin{array}{ll}
\displaystyle y_t - ay_{xx} = h, & (t,x) \in Q,\\
y (t, 0) = y (t, 1) =0,& t \in (0,T),
\\
y(0,x)=y_0(x), \quad y(T,x)=0 & x \in (0,1),
\end{array}
\right.
\end{equation}
has a solution (observe that in this case the  observability inequality
for the solutions of the adjoint system
\begin{equation}\label{sys-adj-remark}
\left\{
\begin{array}{lll}
\displaystyle - v_t - v_{xx}  = 0, &  & (t, x) \in Q, \\
v(t,0)=v(t,1)= 0 , & & t \in (0, T),  \\
v(T,x)= v_{T}(x),  & & x \in  (0, 1).
\end{array}
\right.
\end{equation}
is \begin{align}\label{obs_ineq}
\int_0^1   v^2(0,x)\frac{1}{a} \, dx &\leq C_T
\int\!\!\!\int_{Q}  v^2 \frac{1}{a}\,dx dt, \qquad \text{for all} \, v_T \in  L^2_{\frac{1}{a}}(0,1).
\end{align} Indeed, in order to prove \eqref{obs_ineq}, one can multiply as usual the equation of \eqref{sys-adj-remark} by $-\ds\frac{v}{a}$ and integrate by parts over $(0, 1)$).

Thus, by setting $ u(t,x) = h(t,x) -  \int_{0}^{1} K(t,x,\tau)y(t,\tau)\,d\tau $, this system can be rewritten as
\begin{equation*}
\left\{
\begin{array}{ll}
\displaystyle y_t - ay_{xx} + \int_{0}^{1} K(t,x,\tau)y(t,\tau)\,d\tau  =  u, & (t,x) \in Q,\\
y (t, 0) = y (t, 1) =0,& t \in (0,T),
\\
y(0,x)=y_0(x), \quad y(T,x)=0  & x \in (0,1),
\end{array}
\right.
\end{equation*}
and hence the null controllability of \eqref{problem_nondiv}.
\end{remark}

\begin{remark}
Finally, observe that thanks to Theorem \ref{Thm_nul_ctrl_nonhomog_nondiv} one can prove a  null controllability  result also for the memory system
\begin{equation}\label{memory}
\begin{cases}
\displaystyle y_t -a(x)  y_{xx} = \int\limits_0^t b(t,s,x)  y(s,x) \, ds + 1_{\omega} u, & (t, x) \in Q, \\
y(t,1)= y(t,0)=0,  & t \in (0, T),  \\
y(0,x)= y_{0}(x),   & x \in  (0, 1),
\end{cases}
\end{equation}
 as in \cite{Allal2020}, for the divergence case, or in \cite{AFS2020}, for the singular case. In particular the next theorem holds:
\begin{theorem}\label{Thm_null_Control_memory}
Assume Hypothesis \ref{Hypoth_2_nondiv} and suppose that $b\in L^\infty((0,T)\times Q)$ is such that
\begin{equation}
e^{\frac{c_0s}{(T-t)^2}} b^2(t,s,x) \in L^\infty((0,T)\times Q),
\end{equation}
where $c_0$ and $s$ are as in Hypothesis \ref{Hypoth_K_nondiv}.
Then, for any $y_0 \in L^2_{\frac{1}{a}}(0,1)$,
there exists $u \in L^2(Q)$ such that the associated solution $y \in \mathcal{W}$  of \eqref{memory} satisfies
$$ \quad y(T, \cdot) = 0 \qquad \text{in} \; (0, 1). $$
\end{theorem}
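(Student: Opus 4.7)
The plan is to follow the template of Theorem \ref{thm_nul_ctrl_nonloc_nondiv}, replacing the nonlocal-in-space source by the memory source and invoking Theorem \ref{Thm_nul_ctrl_nonhomog_nondiv} inside a Kakutani fixed point argument. Given $w \in E_{s,M}$, define
\[
f_w(t,x) := \int_{0}^{t} b(t,s,x)\, w(s,x)\, ds,
\]
and associate to $w$ the set of pairs $(y,u)$ where $u \in L^2_{\frac{1}{a}}(Q)$ is a null control for the nonhomogeneous problem \eqref{nonhom_problem} with source $f_w$ and initial datum $y_0$, and $y$ is the corresponding solution. Then set up the multivalued map $B: E_{s,M} \to 2^{E_s}$ assigning to $w$ the subset of such $y$'s satisfying the controllability estimate \eqref{mainestimate_nondiv}.

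The key technical step is to verify $e^{-s\tilde\phi} f_w \in L^2_{\frac{1}{a}}(Q)$ with an explicit bound. By Cauchy--Schwarz,
\[
|f_w(t,x)|^2 \le T \int_{0}^{t} b^{2}(t,s,x)\, w^{2}(s,x)\, ds.
\]
The choice of $c_0$ in Hypothesis \ref{Hypoth_K_nondiv} is calibrated precisely so that $e^{-2s\tilde\phi(t,x)} \le C\, e^{\frac{c_0 s}{(T-t)^2}}$ on $Q$, while the assumption on $b$ gives $b^{2}(t,s,x) \le C\, e^{-\frac{c_0 s}{(T-t)^2}}$. Combining these, the weight $e^{-2s\tilde\phi}$ absorbs $b^2$ uniformly in $s$, yielding
\[
\int\!\!\!\!\!\int_Q \frac{e^{-2s\tilde\phi}}{a(x)} f_w^{2}\, dx\, dt \le C\,T^{2} \int\!\!\!\!\!\int_Q \frac{w^{2}}{a}\, dx\, dt \le C\, M^{2},
\]
since $\tilde\Phi \le 0$ implies $\|w\|_{L^2_{1/a}(Q)} \le \|e^{-s\tilde\Phi}w\|_{L^2_{1/a}(Q)} \le M$. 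Theorem \ref{Thm_nul_ctrl_nonhomog_nondiv} then applies, producing a controlled trajectory satisfying the weighted estimate \eqref{mainestimate_nondiv}, which by the same computation as in \eqref{estim_a}--\eqref{estim_b}, together with Lemma \ref{cruciallemma} and the choice of $s$ sufficiently large, shows $B(E_{s,M}) \subset E_{s,M}$.

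The remaining hypotheses of Kakutani's theorem (nonempty closed convex values, relative compactness of $B(E_{s,M})$, and upper semicontinuity with respect to the $L^2_{\frac{1}{a}}$ topology) follow exactly as in the proof of Theorem \ref{thm_nul_ctrl_nonloc_nondiv}: the energy estimate of Proposition \ref{prop-Well-posed_nonhom_nondiv} bounds trajectories in $\mathcal{Z}$, a compactness result (as in Theorem \ref{thm_compact_generale}) yields strong convergence in $L^2$, and one passes to the limit in the equation, noting that the memory term $\int_0^t b(t,s,x) w_n(s,x)\, ds$ converges to $\int_0^t b(t,s,x) w(s,x)\, ds$ in $L^2$ since $b \in L^\infty$ and $w_n \to w$ in $L^2_{\frac{1}{a}}(Q)$. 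A fixed point $y \in B(y)$ then gives $(y,u)$ satisfying \eqref{memory} with $y(T,\cdot)=0$ for any $y_0 \in H^1_{\frac{1}{a}}(0,1)$. Finally, to handle general $y_0 \in L^2_{\frac{1}{a}}(0,1)$, apply the two-stage splitting as in the proof of Theorem \ref{Thm_null_Control_nonlocal_2}: let the uncontrolled memory equation evolve on $[0,t_0]$ for some $t_0 \in (0,T/2)$ to obtain a more regular state $y(t_0,\cdot) \in H^1_{\frac{1}{a}}(0,1)$, and then apply the $H^1$ result on $[t_0,T]$.

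The main obstacle is the weighted bound on $f_w$: the memory kernel $b$ must decay at $t=T$ fast enough to compensate the blow-up of $e^{-2s\tilde\phi}$ appearing in the controllability estimate for the nonhomogeneous equation. Hypothesis \ref{Hypoth_K_nondiv} (transcribed to the memory kernel via the constant $c_0$) is tailored to this purpose, and once the above pointwise estimate is established, the rest of the argument is a direct adaptation of the nonlocal case.
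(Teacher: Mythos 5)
Your reconstruction is the one the paper intends: the paper gives no written proof of Theorem \ref{Thm_null_Control_memory}, only a pointer to \cite{Allal2020} and \cite{AFS2020}, and the scheme you describe (freeze the memory term at $w\in E_{s,M}$, verify $e^{-s\tilde\phi}f_w\in L^2_{\frac{1}{a}}(Q)$ for $f_w(t,x)=\int_0^t b(t,\sigma,x)w(\sigma,x)\,d\sigma$, apply Theorem \ref{Thm_nul_ctrl_nonhomog_nondiv}, and close with Kakutani exactly as in Theorem \ref{thm_nul_ctrl_nonloc_nondiv}) is the right adaptation. Your key estimate is correctly calibrated: since $\psi\ge-\beta\lambda\|p\|_{L^\infty(0,1)}$ with $\beta=4$ and $\nu(t)\le (4/T)^2(T-t)^{-2}$ on $(0,T)$, one has $-2s\tilde\phi(t,x)\le 8s\lambda\|p\|_{L^\infty(0,1)}\nu(t)\le c_0s(T-t)^{-2}$, so the decay hypothesis on $b$ absorbs the weight and $\|e^{-s\tilde\phi}f_w\|^2_{L^2_{\frac{1}{a}}(Q)}\le CT^2M^2$; the invariance $B(E_{s,M})\subset E_{s,M}$, compactness and upper semicontinuity then go through as in the nonlocal case.

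There is, however, one step that does not transfer verbatim and that your write-up glosses over: the reduction from $y_0\in H^1_{\frac{1}{a}}(0,1)$ to $y_0\in L^2_{\frac{1}{a}}(0,1)$. In the proof of Theorem \ref{Thm_null_Control_nonlocal_2} the two-stage splitting works because the nonlocal-in-space term at time $t$ involves only the state at time $t$, so the system restarted at $t_0$ is the same system with initial datum $\tilde w$. For \eqref{memory} the term $\int_0^t b(t,\sigma,x)y(\sigma,x)\,d\sigma$ at times $t>t_0$ still sees the history on $(0,t_0)$; the controlled problem on $(t_0,T)$ is therefore not \eqref{memory} restarted at $t_0$ but carries the additional known source $g(t,x):=\int_0^{t_0}b(t,\sigma,x)w(\sigma,x)\,d\sigma$ generated by the free evolution. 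You cannot literally ``apply the $H^1$ result on $[t_0,T]$''; you must rerun the fixed point argument on $(t_0,T)$ with total right-hand side $g+\int_{t_0}^t b\,z\,d\sigma+1_\omega h$. This is harmless --- $|g(t,x)|^2\le t_0\int_0^{t_0}b^2(t,\sigma,x)w^2(\sigma,x)\,d\sigma$, so the same decay hypothesis on $b$ gives $e^{-s\tilde\phi}g\in L^2_{\frac{1}{a}}((t_0,T)\times(0,1))$ and Theorem \ref{Thm_nul_ctrl_nonhomog_nondiv} applies to the combined source --- but it is a genuine modification that must be stated. A smaller omission of the same kind: the well-posedness of \eqref{memory} in $\mathcal{W}$ for $y_0\in L^2_{\frac{1}{a}}(0,1)$, needed for the free evolution on $(0,t_0)$ and for gluing the two pieces, is not covered by Theorem \ref{Thm_wellposed_nonloc_nondiv} and requires (an easy) rerun of the Faedo--Galerkin argument, where the $L^\infty$ bound on $b$ replaces \eqref{hyp_K_wellposed} since the memory term is local in $x$.
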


\end{remark}
\section{Null controllability for the nonlocal problem \eqref{problem_div}}\label{section_null_nonlocal_div}

In this section, we pass to derive our second main result, which concerns the null controllability of the nonlocal degenerate heat equation \eqref{problem_div}.
Hence, in what follows, we assume that the function $K$ satisfies
\begin{equation}\label{conditiona}
e^{\frac{c_1 s}{(T-t)^2}}K \in L^{\infty}(Q\times (0,1)),
\end{equation}
where $c_1:=cd(\frac{4}{T})^2$ and $c,d$ are the constants defined in \eqref{weightfunc_div} and $s$ is the same of Lemma \ref{modifiedcarl_div}.

In fact, we are going to apply Theorem \ref{firstresult_div} to the nonlocal degenerate problem \eqref{problem_div}  and obtain
the following result.
\begin{theorem}\label{mainresult}
Assume Hypothesis \ref{Hypoth_2_div}. Let $T>0$ and assume that the function $K$ satisfies \eqref{conditiona}. Then for any $y_0\in H_{a}^{1}(0,1)$, there exists a control function $u\in L^2(Q)$ such that the associated solution $y\in \mathcal{V}$ of \eqref{problem_div} satisfies
\begin{equation}\label{ncresult}
y(T,\cdot)=0 \qquad \text{in}\quad (0,1).
\end{equation}
\end{theorem}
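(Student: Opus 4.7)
The plan is to mirror the strategy used for Theorem \ref{thm_nul_ctrl_nonloc_nondiv} in the non divergence case, replacing the weights $\tilde\phi, \tilde\Phi$ by their divergence counterparts $\tilde\varphi, \tilde\Phi$, and using Theorem \ref{firstresult_div} (together with Lemma \ref{modifiedcarl_div}) as the linear controllability result. Namely, given $M>0$ and $s\ge s_0$ as in Lemma \ref{modifiedcarl_div}, I introduce the closed convex set
\[
\mathcal{E}_{s,M}:=\bigl\{w\in\mathcal{E}_s\,:\,\|e^{-s\tilde\Phi}w\|_{L^2(Q)}\le M\bigr\}\subset L^2(Q),
\]
and, for each $w\in\mathcal{E}_{s,M}$, freeze the nonlocal term and set $f_w(t,x):=-\int_0^1 K(t,x,\tau)w(t,\tau)\,d\tau$. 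The linearized problem then has the form \eqref{nonhom_problem_div} with source $f_w$, to which Theorem \ref{firstresult_div} will be applied.

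The first technical step is to show that $e^{-s\tilde\varphi}f_w\in L^2(Q)$, with a bound controlled by $M$. By Cauchy--Schwarz and the definition of $c_1$ in \eqref{conditiona}, one checks that for every $w\in\mathcal{E}_{s,M}$
\[
\int\!\!\!\!\!\int_Q f_w^{\,2}\,e^{-2s\tilde\varphi}\,dx\,dt
\le \|e^{\frac{c_1 s}{(T-t)^2}}K\|_{L^\infty}^{2}\int\!\!\!\!\!\int_Q w^{2}e^{-2s\tilde\Phi}\,dx\,dt\le C M^{2},
\]
since the weight $\tilde\varphi$ in the exponential compensates precisely the growth of $K$ coming from hypothesis \eqref{conditiona}, and $\tilde\Phi\le 0$. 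Hence Theorem \ref{firstresult_div} applies: for every $y_0\in H^1_a(0,1)$ there exist $u\in L^2(Q)$ and $y\in\mathcal{E}_s$ solving the linearized version of \eqref{problem_div} (with $w$ plugged into the integral) such that $y(T,\cdot)=0$ and the control estimate \eqref{mainestimate_div} holds.

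Next, define the set-valued map $B:\mathcal{E}_{s,M}\to 2^{\mathcal{E}_s}$ sending $w$ to all admissible $y$'s obtained above. By construction $B(w)$ is nonempty, closed and convex in $L^2(Q)$. To apply Kakutani's fixed point theorem in the $L^2$-topology I must verify: (i) $B(\mathcal{E}_{s,M})\subset\mathcal{E}_{s,M}$ for some $M$ large enough; (ii) $B(\mathcal{E}_{s,M})$ is relatively compact; (iii) $B$ is upper semicontinuous. Property (ii) follows from the regularity $y\in\mathcal V$ granted by Proposition \ref{prop-Well-posed_nonhom_div} plus the Aubin--Lions compactness theorem, and (iii) is proved by passing to the limit in the weak formulation of the linearized equation along any converging sequence $w_n\to w$, $y_n\in B(w_n)$, $y_n\to y$, exactly as in the non divergence case.

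The main obstacle is (i), the self-mapping property, because it rests on a strict inequality of the form
\[
2\bigl(\hat\varphi(0)-\check\varphi(\tfrac{5T}{8})+\hat\Phi(0)\bigr)<0.
\]
This is the divergence analogue of Lemma \ref{cruciallemma} and is what forces the specific calibration of the parameters $c,d$ in \eqref{weightfunc_div} and $\rho,\sigma$ in \eqref{weightfunc_nondeg}; in particular one needs $2\hat\Phi(t)\le\hat\varphi(t)$ so that combining with \eqref{mainestimate_div} produces
\[
\int\!\!\!\!\!\int_Q y^{2}e^{-2s\tilde\Phi}\,dx\,dt\le C e^{2s[\hat\varphi(0)-\check\varphi(5T/8)+\hat\Phi(0)]}M^{2}+C e^{-2s\check\varphi(5T/8)}\|y_0\|_{L^2}^{2}.
\]
Choosing $s$ large enough the first exponential becomes $<1/2$, and then choosing $M$ large enough relatively to $\|y_0\|_{L^2}$ the right-hand side is $\le M^{2}$, giving $B(\mathcal E_{s,M})\subset\mathcal E_{s,M}$. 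Once Kakutani provides a fixed point $y\in B(y)$, the corresponding pair $(y,u)$ solves \eqref{problem_div} with $y(T,\cdot)=0$, proving \eqref{ncresult}. As in Theorem \ref{Thm_null_Control_nonlocal_2}, the hypothesis $y_0\in H^1_a(0,1)$ can finally be relaxed to $y_0\in L^2(0,1)$ by a two-step argument: first let the uncontrolled nonlocal equation regularize the datum on $(0,t_0)$ with $t_0<T/2$, then apply the $H^1_a$ result on $(t_0,T)$.
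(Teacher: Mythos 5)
Your proposal follows essentially the same route as the paper: freeze the nonlocal term, check via \eqref{conditiona} that $e^{-s\tilde\varphi}f_w\in L^2(Q)$ with a bound $CM^2$, apply Theorem \ref{firstresult_div}, and run Kakutani's fixed point theorem on the set-valued map $w\mapsto B(w)$, with Aubin--Lions giving compactness and a limit passage giving upper semicontinuity. The outline is correct.

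There is, however, one step you identify as ``the main obstacle'' but do not actually carry out, and it is the only genuinely nontrivial point of the proof: the verification that
$2\bigl(\hat\varphi(0)-\check\varphi(\tfrac{5T}{8})+\hat\Phi(0)\bigr)<0$.
This is not automatic from the definitions in \eqref{weightfunc_div}, which only impose $d>d^\star$ and $c\geq \frac{e^{2\rho\|\sigma\|_\infty}-1}{d-d^\star}$; the paper must \emph{further restrict} the parameters inside the proof, taking
$c\in\left(\frac{e^{2\rho\|\sigma\|_\infty}-1}{d-d^\star},\ \frac{16}{15}\,\frac{e^{2\rho\|\sigma\|_\infty}-e^{\rho\|\sigma\|_\infty}}{d-d^\star}\right)$
(nonempty for $\rho$ large) and $d>4d^\star$, and then computing $\nu(0)$ and $\nu(\tfrac{5T}{8})$ explicitly. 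Without exhibiting such a choice, the self-mapping property $B(\mathcal{E}_{s,M})\subset\mathcal{E}_{s,M}$ is not established. Also, the auxiliary comparison you invoke, $2\hat\Phi(t)\le\hat\varphi(t)$, is the one from the non divergence case (\eqref{compar_max_min} with $\phi$); the paper instead uses the opposite-direction inequality $\hat\varphi(0)\le\hat\Phi(0)$ (a consequence of \eqref{ineq_div}) to reduce the exponent to $4\hat\Phi(0)-2\check\varphi(\tfrac{5T}{8})$ and then shows that this is negative. (Your variant can be made to work too, but it leads to a slightly different constraint on $d$, and in either case the computation has to be done.) Finally, the last paragraph on relaxing $y_0\in H^1_a(0,1)$ to $y_0\in L^2(0,1)$ is superfluous here, since Theorem \ref{mainresult} is stated for $y_0\in H^1_a(0,1)$.
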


\begin{proof}
For our proof we are going to employ a fixed point strategy. For $R> 0$, we define
$$
\mathcal{E}_{s,R}=\big\{w \in \mathcal{E}_{s}:\, \|e^{-s\tilde{\Phi}}w\|_{L^2(Q)}\leq R \big\},
$$
which is a bounded, closed, and convex subset of $L^2(Q)$. For any $w \in \mathcal{E}_{s,R}$, let us consider the control problem
\begin{equation}\label{controlproblem_div}
\left\{
\begin{array}{ll}
y_t - (ay_{x})_x  + \int_{0}^{1}K(t,x,\tau)w(t,\tau)\,d\tau = 1_{\omega} u, & (t,x) \in Q,\\
y(t, 1) = 0, & t \in (0,T),\\
\left\{
\begin{array}{ll}
y (t, 0) = 0,\quad \text{in the (WD) case},\\
(a y_{x})(t, 0)= 0, \quad \text{in the (SD) case},
\end{array}
\right.
& t \in (0,T),
\\
y(0,x)=y_0(x), & x \in (0,1).
  \end{array}
\right.
\end{equation}
Now, from hypothesis \eqref{conditiona} we have that:
\begin{align}\label{r1}
&\int\!\!\!\!\!\int_{Q}  \Big( e^{-s\tilde{\varphi}} \int_{0}^{1} K(t,x,\tau)  w(t,\tau)\,d\tau\Big)^2 \,dx\,dt
\leq \int\!\!\!\!\!\int_{Q}  \int_{0}^{1} e^{\frac{2c_1 s}{(T-t)^2}} K^2(t,x,\tau) w^2(t,\tau)\,d\tau\,dx\,dt \notag\\
&\leq C \int\!\!\!\!\!\int_{Q} w^2\,dx\,dt \notag \leq C \Big(\displaystyle\sup_{(t,x) \in \overline{Q}} e^{2s\tilde{\Phi}} \Big) \int\!\!\!\!\!\int_{Q} e^{-2s\tilde{\Phi}} w^2\,dx\,dt \notag\leq C R^2<+\infty.
\end{align}
Therefore, from Lemma \ref{modifiedcarl_div} we have that \eqref{controlproblem_div} is null controllable, i.e., for any
$y_0\in H_{a}^{1}(0,1)$, there exists a control function $u\in L^2(Q)$ such that the associated
solution $y$ to \eqref{controlproblem_div} satisfies $y(T,\cdot)=0 \quad \text{in}\quad (0,1)$.

Notice that \eqref{controlproblem_div} is different from the original system \eqref{problem_div}, since the variable $y$
does not enter directly in the integral term. In order to conclude our proof and obtain the same controllability result for $w=y$,
we shall apply Kakutani's fixed point Theorem.

For any
$w \in \mathcal{E}_{s,R}$, we define the multivalued map $\Lambda: \mathcal{E}_{s,R} \subset \mathcal{E}_{s} \rightarrow 2^{\mathcal{E}_{s}}$ such that
\begin{align*}
\Lambda(w)=\displaystyle\Big\{&y: y\in \mathcal{E}_s\; \text{and there exists}\; u \in L^2(Q)\; \text{such that} \\
&\int\!\!\!\!\!\int_{Q_{\omega}}  s^{-3} \nu^{-3} u^{2} e^{-2s\tilde{\Phi}}\,dx\,dt
\leq C e^{2s[\hat{\varphi}(0)-\check{\varphi}(\frac{5T}{8})]} \Big(R^2+\int_{0}^{1}y_0^2 e^{-2s\hat{\varphi}(0)}\,dx\Big)\\
&y \text{ solves } \eqref{controlproblem_div}\Big\}.
\end{align*}
It is easy to check that $\Lambda(w)$ is a nonempty, closed, and convex subset of $L^2(Q)$.
Moreover, by \eqref{mainestimate_div} and \eqref{conditiona}, proceeding as before we have:
\begin{align*}
&\int\!\!\!\!\!\int_{Q} y^2 e^{-2s\tilde{\Phi}}\,dx\,dt +
\int\!\!\!\!\!\int_{Q_{\omega}}  s^{-3} \nu^{-3} u^{2} e^{-2s\tilde{\Phi}}\,dx\,dt \\
&\leq C e^{2s[\hat{\varphi}(0)-\check{\varphi}(\frac{5T}{8})]} \Big(\int\!\!\!\!\!\int_{Q}  e^{-2s\tilde{\varphi}}  \Big(\int_{0}^{1} K(t,x,\tau)  w(t,\tau)\,d\tau\Big)^2 \,dx\,dt + e^{-2s\hat{\varphi}(0)} \int_0^1 y_0^2\,dx\Big) \\
&\leq   C e^{2s[\hat{\varphi}(0)-\check{\varphi}(\frac{5T}{8})]} \Big( \int\!\!\!\!\!\int_{Q} w^2(t,x)\,dx\,dt + e^{-2s\hat{\varphi}(0)}  \int_0^1 y_0^2\,dx\Big)\\
&\leq  C e^{2s[\hat{\varphi}(0)-\check{\varphi}(\frac{5T}{8})]} \Big(\displaystyle\sup_{(t,x) \in \overline{Q}} e^{2s\tilde{\Phi}} \Big) \Big( \int\!\!\!\!\!\int_{Q} e^{-2s\tilde{\Phi}(t,x)} w^2(t,x)\,dx\,dt \Big) + C e^{-2s\check{\varphi}(\frac{5T}{8})} \int_0^1 y_0^2\,dx.
\end{align*}
Since $\hat \varphi(0) \le \hat \Phi(0)$ and $ \tilde{\Phi} \leq \hat{\Phi}(0)$ in $Q$, then we get
\begin{equation}\label{starstima}
\begin{aligned}
&\int\!\!\!\!\!\int_{Q} y^2 e^{-2s\tilde{\Phi}}\,dx\,dt +
\int\!\!\!\!\!\int_{Q_{\omega}}  s^{-3} \nu^{-3} u^{2} e^{-2s\tilde{\Phi}}\,dx\,dt \\
&\leq  C e^{s[2\hat{\varphi}(0)-2\check{\varphi}(\frac{5T}{8})+2\hat{\Phi}(0)]} \int\!\!\!\!\!\int_{Q} e^{-2s\tilde{\Phi}(t,x)} w^2(t,x)\,dx\,dt
+ C e^{-2s\check{\varphi}(\frac{5T}{8})} \int_0^1 y_0^2\,dx  \\
&\leq  C e^{s[4\hat{\Phi}(0)-2\check{\varphi}(\frac{5T}{8})]} R^2 + C e^{-2s\check{\varphi}(\frac{5T}{8})} \int_0^1  y_0^2\,dx.
\end{aligned}
\end{equation}
Now, choosing the constant $c$ (see \eqref{weightfunc_div}) in the interval
$$
\left(\frac{e^{2 \rho \|\sigma\|_{\infty}} -1}{d-d^{\star}}, \frac{16}{15}\frac{e^{2 \rho \|\sigma\|_{\infty}} -e^{\rho \|\sigma\|_{\infty}}}{d-d^{\star}}\right),
$$
which is not empty for $\rho$ sufficiently large, we have
\begin{align*}
2\hat{\Phi}(0)-\check{\varphi}\left(\frac{5T}{8}\right)
&=\left(\frac{4}{T^2}\right)^2\Big[2(e^{\rho\|\sigma\|_{\infty}}-e^{2\rho\|\sigma\|_{\infty}}) + c d\left (\frac{16}{15}\right)^2 \Big]\\
&< \left(\frac{4}{T^2}\right)^2 \left(-2 +  \frac{d}{d-d^{\star}} \left(\frac{16}{15}\right)^{3} \right) (e^{2\rho\|\sigma\|_{\infty}}-e^{\rho\|\sigma\|_{\infty}}).
\end{align*}
Therefore, taking the parameter $d$ defined in \eqref{weightfunc_div} in such a way that  $d>4d^{\star}$, we infer that
$$
2\hat{\Phi}(0)-\check{\varphi}(\frac{5T}{8})<0.
$$
Hence for $s$ sufficiently large, increasing the parameter $s_0$ if necessary, we obtain
\begin{equation*}
\int\!\!\!\!\!\int_{Q} y^2 e^{-2s\tilde{\Phi}}\,dx\,dt +
\int\!\!\!\!\!\int_{Q_{\omega}}  s^{-3} \nu^{-3} u^{2} e^{-2s\tilde{\Phi}}\,dx\,dt
\leq \frac{1}{2} R^2 + C e^{-2s\check{\varphi}(\frac{5T}{8})} \int_0^1  y_0^2\,dx.
\end{equation*}
Thus, for  $s$ and $R$ large enough, we obtain
\begin{equation*}
\int\!\!\!\!\!\int_{Q} y^2 e^{-2s\tilde{\Phi}}\,dx\,dt
\leq R^2.
\end{equation*}
It follows that $\Lambda(\mathcal{E}_{s,R}) \subset \mathcal{E}_{s,R}$.

Let $\{w_n\}$ be a sequence of $\mathcal{E}_{s,R}$. The regularity assumption on $y_0$
and Proposition \ref{prop-Well-posed_nonhom_div}, imply that the corresponding solutions $\{y_n\}$ are bounded in  $H^1\big(0,T; L^2(0,1)\big) \cap L^2\big(0,T;D(A)\big)$.
Therefore, by the Aubin-Lions Theorem \cite{simon}, $\Lambda(\mathcal{E}_{s,R})$ is a relatively compact subset of $L^2(Q)$.

In order to conclude, we have to prove that $\Lambda$ is upper-semicontinuous under the $L^2$ topology.
Notice that, for any $w\in \mathcal{E}_{s,R}$, we have at least one control $u \in L^2(Q)$ such that the corresponding
solution $y$ belongs to $\mathcal{E}_{s,R}$.
Hence, taking $\{w_n\}$ a sequence in $\mathcal{E}_{s,R}$, we can find a sequence of controls $\{u_n\}$ such that the corresponding solutions
$\{y_n\}$ is in $L^2(Q)$. Thus, let $\{w_n\}$ be a sequence satisfying $w_n \rightarrow w$ in $\mathcal{E}_{s,R}$ and
$y_n \in \Lambda(w_n)$ such that $y_n \rightarrow y$ in $L^2(Q)$. Our aim is to prove that
$y \in \Lambda(w)$.
For every $n$, we have a control $u_n \in L^2(Q)$ such that the system
\begin{equation}\label{div_sys_n}
\left\{
\begin{array}{lll}
\displaystyle y_{n,t} - (ay_{n,x})_x + \int_0^1 K(t,x,\tau) w_n(t,\tau)\,d\tau = 1_{\omega} u_n, &  & (t, x) \in Q,\\
y_n(t, 1) = 0, & t \in (0,T),\\
\left\{
\begin{array}{ll}
y_n(t, 0) = 0,\quad \text{in the (WD) case},\\
(a y_{n,x})(t, 0)= 0, \quad \text{in the (SD) case},
\end{array}
\right.
& t \in (0,T),
\\
y_n(0,x)= y_{0}(x),  & & x \in  (0,1)
\end{array}
\right.
\end{equation}
has a least one solution $y_n\in L^2(Q)$ that satisfies
\begin{equation*}
y_n(T,\cdot)=0 \qquad \text{in}\quad (0,1).
\end{equation*}
By the regularity of the solutions (see Proposition \ref{prop-Well-posed_nonhom_div}) and \eqref{starstima}, it follows (at least for a subsequence) that
\begin{align*}
u_n \rightarrow u \quad &\text{weakly in} \; L^2(Q),\notag \\
y_n \rightarrow y \quad &\text{weakly in} \; H^1\big(0,T; L^2(0,1)\big) \cap L^2\big(0,T;D(A)\big),\\
&\text{strongly in} \; C(0, T; L^2(0,1)).
\end{align*}
Passing to the limit in \eqref{div_sys_n}, we obtain a control $u\in L^2(Q)$ such that the corresponding solution $y$ to \eqref{controlproblem_div}
satisfies \eqref{ncresult}. This shows that $y\in \Lambda(w)$ and, therefore, the map $\Lambda$ is upper-semicontinuous.

Hence, all the assumptions of Kakutani's fixed point Theorem are fulfilled and
we infer that there is at least one $y \in \mathcal{E}_{s,R}$ such that $y\in \Lambda(y)$.
By the definition of $\Lambda$, this implies that there exists at least one pair $(y,u)$ satisfying the conditions of Theorem \ref{mainresult}.
The uniqueness of $y$ follows by Theorem \ref{Thm_wellposed_nonloc_div}.
Therefore, our assertion is proved.

\end{proof}


\section{Appendix}\label{Appendix}
\subsection{Proof of Theorem \ref{thm_Carl_bound}}
As in \cite{CFR2008}, we define, for
$s > 0$, the function
\[
w(t,x) := e^{s \phi(t,x)} v(t,x)
\]
where $v$ is the solution of \eqref{adjoint_problem} in $\mathcal{Z}$. Observe that,
since $v\in \mathcal{Z}$, $w\in \mathcal{Z}$ and $w$ satisfies
\begin{equation}\label{1'}
\begin{cases}
(e^{-s\phi}w)_t + a(x)(e^{-s\phi}w)_{xx}  =g(t,x), & (t,x) \in
Q,
\\
w(0, x)= w(T,x)= 0, & x \in (0,1),
\\
w(t,0)= w(t, 1)= 0, & t \in (0, T).
\end{cases}
\end{equation}
Defining $Lv:= v_t + av_{xx}$ and $L_sw:=
e^{s\phi}L(e^{-s\phi}w)$, the equation of \eqref{1'} can be
recast as follows
\begin{center}
$
L_sw =e^{s\phi}g.
$
\end{center}
In particular, the following equality holds:
\begin{proposition}(see \cite[Proposition 3.1]{yamamoto2020})
The operator $L_sw$ can be rewritten as
\[
L_sw =L^+_sw + L^-_sw,
\]
where $L^+_s$ and $L^-_s$ denote the (formal) selfadjoint and skewadjoint parts of $L_s$. In this case
\[
\begin{cases}
L^+_sw := aw_{xx}
 - s \phi_t w + s^2a \phi_x^2 w,
\\[5pt]
L^-_sw := w_t -2sa\phi_x w_x -
 sa\phi_{xx}w.
 \end{cases}
\]
\end{proposition}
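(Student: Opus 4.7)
The plan is a direct computation of $L_s w = e^{s\phi} L(e^{-s\phi}w)$ followed by a regrouping of terms according to their symmetry in the weighted inner product $\langle u, v\rangle_a := \iint_Q (uv/a)\,dx\,dt$ that is natural for \eqref{adjoint_problem}. With $v = e^{-s\phi}w$, the product rule gives $v_t = e^{-s\phi}(w_t - s\phi_t w)$ and $v_{xx} = e^{-s\phi}(w_{xx} - 2s\phi_x w_x - s\phi_{xx}w + s^2\phi_x^2 w)$. Substituting these into $Lv = v_t + a v_{xx}$ and multiplying the result by $e^{s\phi}$ yields
\[
L_s w = w_t + a w_{xx} - 2sa\phi_x w_x - s\phi_t w - sa\phi_{xx}w + s^2 a \phi_x^2 w,
\]
which is the explicit form of $L_s$ that will be split.

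Next I identify the selfadjoint part. Multiplication by any function of $(t,x)$ is trivially $\langle\cdot,\cdot\rangle_a$-selfadjoint; and $a\partial_x^2$ is selfadjoint as well, since the weight $1/a$ cancels the leading coefficient and two integrations by parts (boundary contributions vanish by the Dirichlet conditions $w(t,0)=w(t,1)=0$) give $\langle a u_{xx}, v\rangle_a = -\iint u_x v_x\,dx\,dt = \langle u, a v_{xx}\rangle_a$. Therefore the three terms $a w_{xx}$, $-s\phi_t w$, and $s^2 a\phi_x^2 w$ sum to a selfadjoint operator, which is precisely the stated $L_s^+$.

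For $L_s^-$, observe first that $\partial_t$ is skewadjoint with respect to $\langle\cdot,\cdot\rangle_a$ because $a$ is independent of $t$ and $w$ vanishes at $t=0,T$. The delicate point is that neither $-2sa\phi_x w_x$ nor $-sa\phi_{xx}w$ is skewadjoint in isolation — indeed the latter, being a multiplication, is in fact \emph{selfadjoint} — but their particular combination is skewadjoint. A single integration by parts in $x$ confirms
\[
\langle -2sa\phi_x u_x - sa\phi_{xx}u,\,v\rangle_a = -\langle u,\,-2sa\phi_x v_x - sa\phi_{xx}v\rangle_a,
\]
because the cross-term $+2s\iint \phi_{xx}uv\,dx\,dt$ produced by moving $\partial_x$ off $u$ combines with the original $-s\iint \phi_{xx}uv\,dx\,dt$ to flip the overall sign. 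Adding $\partial_t$ recovers exactly $L_s^-$, and the sum $L_s^+ + L_s^-$ matches the explicit formula for $L_s w$ above.

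The main (mild) subtlety — and really the only non-mechanical step — is this last grouping: the multiplicative term $-sa\phi_{xx}w$ must be detached from its natural home among the multiplicative terms of $L_s^+$ and absorbed together with the transport term $-2sa\phi_x w_x$ in order to produce a genuinely skewadjoint operator. Everything else is pure chain rule bookkeeping.
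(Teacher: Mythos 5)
Your computation is correct: the chain-rule expansion of $e^{s\phi}L(e^{-s\phi}w)$, the grouping into $L_s^+$ and $L_s^-$, and the verification that $a\partial_x^2$ and the multiplication terms are selfadjoint while $\partial_t$ and the combination $-2sa\phi_x\partial_x - sa\phi_{xx}$ are skewadjoint with respect to $\langle u,v\rangle_{L^2_{1/a}(Q)}$ all check out, and this is exactly the standard argument (the paper itself gives no proof, deferring to \cite[Proposition 3.1]{yamamoto2020}). Your reading of the one non-mechanical point — that $-sa\phi_{xx}w$ must be paired with the transport term rather than left among the zeroth-order selfadjoint terms — is the right observation, and the "(formal)" qualifier in the statement covers the boundary terms at the degenerate endpoint $x=0$ that a fully rigorous adjointness check would have to address.
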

Moreover, set as usual $<u, v>_{L^2_{\frac{1}{a}}(Q)} \::=\;
$\small$\displaystyle\int\!\!\!\!\!\int_{Q}uv\frac{1}{a}dxdt$\normalsize,
\;one has
\begin{equation} \label{Dis1}
\|L^+_sw \|^2_{L^2_{\frac{1}{a}}(Q)} + \:\|L^-_sw\|^2_{L^2_{\frac{1}{a}}(Q)} +
2<L^+_sw, L^-_sw>_{L^2_{\frac{1}{a}}(Q)} =\;
\|ge^{s\phi}\|^2_{L^2_{\frac{1}{a}}(Q)}.
\end{equation}
In \cite[Lemma 3.8]{CFR2008} it is proved that  the scalar product $<L^+_sw,L^-_sw>_{L^2_{\frac{1}{a}}(Q)}$ can be rewritten as the sum of ditributed and boundary terms in the following way:
\begin{equation}\label{D&BT}
\left.
\begin{aligned}
<L^+_sw,L^-_sw>_{L^2_{\frac{1}{a}}(Q_{{}_1})}
\;&=\;
s\int\!\!\!\!\!\int_{Q}(a\phi_{xx}+(a\phi_x)_x)w_x^2dxdt
\\[3pt]&+ s^3 \int\!\!\!\!\!\int_{Q}\phi_x^2(a\phi_{xx}+(a\phi_x)_x)w^2dxdt
\\[3pt]
&
-2 s^2\int\!\!\!\!\!\int_{Q}\phi_x\phi_{xt}w^2dxdt
 +\frac{s}{2}\int\!\!\!\!\!\int_{Q}\frac{\phi_{tt}}{a}w^2dxdt
\\[3pt] &-\frac{s}{2}\int\!\!\!\!\!\int_{Q}(a\phi_{xx})_{xx}w^2 dxdt
\end{aligned}\right\}\;\text{\{D.T.\}}
\end{equation}
\begin{equation}\nonumber
\hspace{55pt}
\text{\{B.T.\}}\;\left\{
\begin{aligned}
&
-\frac{1}{2}\int_0^{1}\Big[w_x^2\Big]_0^Tdx
+\int_0^T\Big[w_xw_t\Big]_0^{1}dt
\\[3pt]&+ \frac{s}{2} \int_0^T \Big[(a\phi_{xx})_xw^2\Big]_0^{1}dt
- s \int_0^T\Big[ a \phi_xw_x^2\Big]_0^{1}dt\\[3pt]&
- s \int_0^T\!\!\Big[ a\phi_{xx}ww_x\Big]_0^{1}dt
+\frac{1}{2} \int_{0}^{1}\!\!\Big[ (s^2 \phi_x^2 - s \frac{\phi_t}{a})w^2\Big]_0^Tdx
\\[3pt]
&
- s \int_0^T\Big[  (s^2 a \phi_x^3- s\phi_x\phi_t)w^2\Big]_0^{1}dt.
\end{aligned}\right.
\end{equation}
 Moreover, as in \cite[Lemma 3.9]{CFR2008}, one has that
the boundary terms in \eqref{D&BT} become
\begin{equation}\label{BT1}
\begin{aligned}
\{B.T.\}=-se\int_0^T\theta(t)w_x^2(t,1)dt.
\end{aligned}
\end{equation}

Now, we can prove the Carleman estimate \eqref{Carl_estimate_bound}.
\begin{proof}[Proof of Theorem \ref{thm_Carl_bound}]Using the definition of $\phi$, the distributed terms of
\:$<L^+_sw, L^-_sw>_{L^2_{\frac{1}{a}}(Q)}$ take the form
\begin{equation}\label{02}
\begin{aligned}
\big\{D.T.\big\}\; &=
\quad s \lambda\int\!\!\!\!\!\int_{Q}\theta\Big(2-\frac{xa_x}{a}+ 4x^2\Big)e^{x^2}w_x^2dxdt
\\
&+s^3\lambda^3\int\!\!\!\!\!\int_{Q}\theta^3\Big(\frac{x}{a}\Big)^2\Big(2-\frac{xa_x}{a}+ 4x^2\Big)e^{3x^2}w^2dxdt
\\
&-2 s^2\lambda^2\int\!\!\!\!\!\int_{Q}\theta\dot{\theta}\Big(\frac{x}{a}\Big)^2e^{2x^2}w^2dxdt
+\frac{s\lambda}{2} \int\!\!\!\!\!\int_{Q}\frac{\ddot{\theta}}{a}\Big(p- 
\beta \|p\|_{L^{\infty}(0,1)}\Big) w^2dxdt
\\
&+\frac{s\lambda}{2} \int\!\!\!\!\!\int_{Q}\theta
\Big(\frac{xa_x}{a}\Big)_{xx}e^{x^2}w^2dxdt +2s\lambda\int\!\!\!\!\!\int_{Q}\theta  x\Big(\frac{xa_x}{a}\Big)_{x}
e^{x^2}w^2dxdt
\\
& +s\lambda\int\!\!\!\!\!\int_{Q}\theta \left( (1+2x^2)\Big(\frac{xa_x}{a}\Big)
-(3+12x^2+4x^4)\right)e^{x^2}w^2dxdt.
\end{aligned}
\end{equation}
Because of Hypothesis \ref{Hypoth_2_nondiv}, we have
\[
 2-\frac{xa_x}{a}+ 4x^2\ge  2-\frac{xa_x}{a}\ge 2-\alpha>0\quad \forall\; x\in(0,1],
\]
\[
\begin{aligned}
&\left|\left(2x\Big(\frac{xa_x}{a}\Big)_{x}
+(1+2x^2)\Big(\frac{xa_x}{a}\Big) - (3+12x^2+4x^4)\right)
\right|\le
\\[3pt]
&  \left(2\norm{x\Big(\frac{xa_x}{a}\Big)_{x}}_{L^{\infty}(0,1)} \!\!\!+
3\norm{\Big(\frac{xa_x}{a}\Big)}_{L^{\infty}(0,1)}\!\!\! +
19)\right) =: C_1
\end{aligned}
\]
and
\[
\frac{1}{2}\left|\Big(\frac{xa_x}{a}\Big)_{xx}e^{x^2} \right| \le c \frac{e}{a}.
\]
Hence, by the previous estimates and \eqref{02}, we obtain
\begin{equation}
\begin{aligned}
\big\{D.T.\big\}\;
&\ge
 s(2-\alpha)\lambda\int\!\!\!\!\!\int_{Q}\theta w_x^2 dxdt
 + s^3(2-\alpha)\lambda^3\int\!\!\!\!\!\int_{Q}\theta^3 \Big(\frac{x}{a}\Big)^2w^2dxdt
\\[3pt]&-2 s^2\lambda^2e^{2}\int\!\!\!\!\!\int_{Q}\theta|\dot{\theta}|\Big(\frac{x}{a}\Big)^2w^2dxdt
-s\lambda \frac{\beta}{2}\|p\|_{L^{\infty}(0,1)} \int\!\!\!\!\!\int_{Q}\frac{|\ddot{\theta}|}{a}w^2dxdt
\\
&-se\lambda (C_1 \norm{a}_{L^{\infty}(0,1)} +c)\int\!\!\!\!\!\int_{Q}\frac{\theta}{a}w^2dxdt.
\end{aligned}
\end{equation}
Observing that there exists $C_T>0$ such that $\theta \leq C_T \theta^2,$ $|\theta\dot{\theta}|\le C_T\theta ^3$, $|\ddot{\theta}| \le C_T\theta ^2$,
one can deduce the next estimate:
\begin{equation}\label{02'}
\begin{aligned}
\big\{D.T.\big\}\;
&\ge \;
s(2-\alpha)\lambda\int\!\!\!\!\!\int_{Q}\theta w_x^2 dxdt
+ s^3(2-\alpha)\lambda^3\int\!\!\!\!\!\int_{Q}\theta^3 \Big(\frac{x}{a}\Big)^2w^2dxdt
\\[3pt]
&
-2s^2\lambda^2 e^{2}C_T\int\!\!\!\!\!\int_{Q}\theta^3\Big(\frac{x}{a}\Big)^2 w^2 dxdt
\\[3pt]&- s \lambda C_T \Big(e \big( C_1 \norm{a}_{L^{\infty}(0,1)} +c\big)+ \frac{\beta}{2}\|p~\|_{L^{\infty}(0,1)}\Big)
\int\!\!\!\!\!\int_{Q}\frac{\theta^2}{a}~w^2 dxdt.
\end{aligned}
\end{equation}
Now, we estimate
$\displaystyle -
\int\!\!\!\!\!\int_{Q}\frac{\theta^2}{a}w^2 dxdt
$\normalsize.
For $\gamma >0$ it results
\[
\begin{aligned}
\int\!\!\!\!\!\int_{Q}\frac{\theta^2}{a}w^2 dxdt
&=
\int\!\!\!\!\!\int_{Q} \left(\frac{1}{\gamma}\theta^3\left(\frac{x}{a}\right)^2w^2\right)^{\frac{1}{2}}
\left(\gamma\frac{\theta}{~x^2}\, w^2\right)^{\frac{1}{2}}dxdt
\\[3pt]&\le
\frac{1}{\gamma}\int\!\!\!\!\!\int_{Q}\theta^3\left(\frac{x}{a}\right)^2w^2 dxdt
+ \gamma\int\!\!\!\!\!\int_{Q}\frac{\theta}{~x^2}\, w^2 dxdt.
\end{aligned}\]
By the classical Hardy's inequality \cite[Lemma 5.3.1]{Davies1995} one has
\[\begin{aligned}
\int\!\!\!\!\!\int_{Q}\frac{\theta^2}{a}w^2 dxdt
& \le
\frac{1}{\gamma}\int\!\!\!\!\!\int_{Q}\theta^3\left(\frac{x}{a}\right)^2w^2 dxdt
+\gamma C_H\int\!\!\!\!\!\int_{Q}\theta w_x^2 dxdt,
\end{aligned}\]
for some positive constant $C_H.$ Substituting in \eqref{02'}, we obtain
\[
\begin{aligned}
\big\{D.T.\big\}\;
&\ge \;
s(2-\alpha)\lambda\int\!\!\!\!\!\int_{Q}\theta w_x^2 dxdt
+ s^3(2-\alpha)\lambda^3\int\!\!\!\!\!\int_{Q}\theta^3 \Big(\frac{x}{a}\Big)^2w^2dxdt
\\[3pt]
&
-2s^2\lambda^2 e^{2}C_T\int\!\!\!\!\!\int_{Q}\theta^3\Big(\frac{x}{a}\Big)^2 w^2 dxdt
\\[3pt]&- s \lambda C_T \Big(e \big( C_1 \norm{a}_{L^{\infty}(0,1)} +c\big)+ \frac{\beta}{2}\|p~\|_{L^{\infty}(0,1)}\Big)
\frac{1}{\gamma}\int\!\!\!\!\!\int_{Q}\theta^3\left(\frac{x}{a}\right)^2w^2 dxdt\\
&-s \lambda C_T \Big(e \big( C_1 \norm{a}_{L^{\infty}(0,1)} +c\big)+ \frac{\beta}{2}\|p~\|_{L^{\infty}(0,1)}\Big)\gamma C_H\int\!\!\!\!\!\int_{Q}\theta w_x^2 dxdt.
\end{aligned}
\]
By \eqref{Dis1}, \eqref{D&BT}, \eqref{BT1} and the previous inequality, we can conclude that for $s_0$ large enough and $\gamma$ small enough,
\[\begin{aligned}
&C_{\gamma}\left(s\lambda \int\!\!\!\!\!\int_{Q}\theta w_x^2dxdt
+ s^3\lambda^3\int\!\!\!\!\!\int_{Q}\theta^3 \left(\frac{x}{a}\right)^2 w^2 dxdt\right)
- se\int_0^T\theta(t)w_x^2(t,1)dt
\\[3pt]&\le
\int\!\!\!\!\!\int_{Q} g^{2}\text{\small$\frac{~e^{2s\phi}}{a}$\normalsize}~dxdt,
\end{aligned}\]
for some positive constant $C_{\gamma}$ and for all $s\ge s_0$.
\end{proof}

Recalling the definition of $w$, we have $v= e^{-s\phi}w$
and $v_{x}=  (w_{x}-s\phi_{x}w)e^{-s\phi}$. Thus, Theorem \ref{thm_Carl_bound} is proved.

\subsection{Proof of  Caccioppoli's inequality \eqref{inequality_caccio}}
This subsection is devoted to the proof of Caccioppoli's inequality \eqref{inequality_caccio}.
\begin{proof}[Proof of Lemma \ref{lemma_caccio}]
Let us consider a smooth cut-off function $\xi:[0,1] \rightarrow \mathbb{R}$ such that
$$
\left\{\begin{array}{ll}
0 \leq \xi(x) \leq 1, & \text { for all } x \in[0,1], \\
\xi(x)=1, & x \in \omega_2, \\
\xi(x)=0, & x \in(0,1) \backslash \omega_1.
\end{array}\right.
$$
Since $v$ solves \eqref{adjoint_problem}, then integrating by parts we obtain
\begin{align*}
0 &=\int_{0}^{T} \frac{d}{d t}\left( \int_{0}^{1}\left(\xi e^{s \pi}\right)^{2} v^{2} d x \right) d t \\
&=\int\!\!\!\!\!\int_{Q} \big( 2 s \pi_{t}\left(\xi e^{s \pi}\right)^{2} v^{2}+2\left(\xi e^{s \pi}\right)^{2} v (- a v_{xx} + g) \big) dx dt \\
&=2 s \int\!\!\!\!\!\int_{Q} \pi_{t}\left(\xi e^{s \pi}\right)^{2} v^{2} d x  d t - \int\!\!\!\!\!\int_{Q} ( a \xi^{2} e^{2 s \pi})_{xx}  v^2 dx dt \\
&+2 \int\!\!\!\!\!\int_{Q} a \xi^{2} e^{2 s \pi}  v_x^2 dx dt + 2 \int\!\!\!\!\!\int_{Q} \xi^{2} e^{2 s \pi} v g dx dt .
\end{align*}

Therefore,
\begin{align*}
\int\!\!\!\!\!\int_{Q} a \xi^{2} e^{2 s \pi} v_{x}^{2} dx dt &= -  s \int\!\!\!\!\!\int_{Q} \pi_{t}\left(\xi e^{s \pi}\right)^{2} v^{2} d x  d t + \frac{1}{2}\int\!\!\!\!\!\int_{Q} ( a \xi^{2} e^{2 s \pi})_{xx}  v^2 dx dt \\
&- \int\!\!\!\!\!\int_{Q} \xi^{2} e^{2 s \pi} v g d x  d t.
\end{align*}
Hence, taking into account the definition of $\xi$ and the fact that, $|\dot\theta| \leq C \theta^2$, $a \in C^1(\overline{\omega}_2)$, $\inf\limits_{x\in \omega_2} a(x) > 0$ and $\eta \in C^2(\overline{\omega}_2)$, applying Young's inequality, we infer that
\begin{align*}
&\inf _{x\in\omega_2}\{a(x)\} \int_{0}^{T}\!\!\!\int_{\omega_2} e^{2 s \pi} v_{x}^{2} dx dt
\leq \int\!\!\!\!\!\int_{Q} a \xi^{2} e^{2 s \pi} v_{x}^{2} dx dt\\
& \leq  C   \int_{0}^{T}\!\!\!\int_{\omega_1} s\theta^2 e^{2s \pi} v^{2} dx dt 
+  C \int_{0}^{T}\!\!\!\int_{\omega_1} s^2 \theta^2 e^{2s \pi} v^{2} dx dt \\
&+ C\int_{0}^{T}\!\!\!\int_{\omega_1} e^{2s \pi} v^{2} d x  d t
+ C \int_{0}^{T}\!\!\!\int_{\omega_1} e^{2 s \pi} g^{2} d x  d t \\
& \leq C \int_{0}^{T}\!\!\!\int_{\omega_1} \big( g^2 + s^2 \theta^2  v^{2}\big) e^{2s \pi} d x  d t,
\end{align*}
from which the conclusion follows.
\end{proof}
\subsection{Compactness Theorem
}

For the proof of Theorem \ref{thm_compact_generale} below we will use the Aubin's Theorem, that we give here for the reader's convenience.
\begin{theorem}(see, e.g., \cite[Chapter 5]{an})\label{thm_Aubin}
Let $X_0$, $X_1$ and $X_2$ be three Banach spaces such that
\[
X_0 \subset X_1 \subset X_2,
\]
$X_0$, $ X_2$ are reflexives and the injection of $X_0$ into $X_1$
is compact. Let $r_0$, $r_1 \in (1, +\infty)$ and $a,b \in \R$,
$a<b$. Then the space
\[
L^{r_0}(a,b; X_0) \cap W^{1,r_1}(a,b; X_2)
\]
is compactly embedded in $L^{r_0}(a,b; X_1)$.
\end{theorem}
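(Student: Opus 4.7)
The plan is to follow the classical proof of the Aubin-Lions-Simon compactness lemma, combining Ehrling's interpolation inequality, a time-mollification trick, and an Arzelà-Ascoli argument. The goal is to show that any bounded sequence $(u_n)$ in $Y:=L^{r_0}(a,b;X_0)\cap W^{1,r_1}(a,b;X_2)$ admits a subsequence converging strongly in $L^{r_0}(a,b;X_1)$.

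The first step is Ehrling's lemma: since $X_0\hookrightarrow X_1$ is compact and $X_1\hookrightarrow X_2$ is continuous, for every $\varepsilon>0$ there exists $C_\varepsilon>0$ such that
\[
\|v\|_{X_1}\leq \varepsilon \|v\|_{X_0}+C_\varepsilon\|v\|_{X_2},\qquad \forall\, v\in X_0.
\]
Raising to the power $r_0$ and integrating in time, combined with the uniform $L^{r_0}(a,b;X_0)$-bound on $(u_n)$, reduces the claim to proving that a subsequence of $(u_n)$ converges strongly in $L^{r_0}(a,b;X_2)$.

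To obtain this, I would introduce a time mollification $u_n^\delta:=\rho_\delta\ast u_n$. By Young's inequality, $(u_n^\delta)$ is bounded in $L^\infty(a+\delta,b-\delta;X_0)$ uniformly in $n$, so for each fixed $t$ the set $\{u_n^\delta(t)\}_n$ is bounded in $X_0$, hence precompact in $X_2$ via the compact embedding $X_0\hookrightarrow X_1\hookrightarrow X_2$. Moreover, $(u_n^\delta)'=\rho_\delta\ast u_n'$ is bounded in $L^{r_1}(a+\delta,b-\delta;X_2)$, yielding Hölder equicontinuity in $X_2$:
\[
\|u_n^\delta(t)-u_n^\delta(s)\|_{X_2}\leq C |t-s|^{1-1/r_1}.
\]
Arzelà-Ascoli then yields, for each fixed $\delta$, a subsequence of $(u_n^\delta)$ converging uniformly in $C([a+\delta,b-\delta];X_2)$, and hence in $L^{r_0}(a+\delta,b-\delta;X_2)$. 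On the other hand, using $\|u_n(t)-u_n(t-s)\|_{X_2}\leq \|u_n'\|_{L^{r_1}(a,b;X_2)}|s|^{1-1/r_1}$, one obtains the uniform mollification estimate $\|u_n-u_n^\delta\|_{L^{r_0}(a,b;X_2)}\leq C\delta^{1-1/r_1}\to 0$ as $\delta\to 0$.

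A diagonal extraction over a sequence $\delta_k\downarrow 0$ combined with this uniform estimate produces a subsequence of $(u_n)$ which is Cauchy, hence convergent, in $L^{r_0}(a,b;X_2)$. Plugging back into Ehrling's inequality closes the argument. The main obstacle is the combination of pointwise precompactness with uniform-in-time regularity: raw $L^{r_0}(a,b;X_0)$-boundedness gives only $X_0$-bounds almost everywhere, and the mollification trick—crucially relying on $r_1>1$—is the device that converts this into pointwise equiboundedness in $X_0$, which unlocks Arzelà-Ascoli. The reflexivity hypotheses on $X_0$ and $X_2$ enter to guarantee that weak limits can be extracted from bounded sequences (and to identify the limit across the $L^{r_0}$ and $W^{1,r_1}$ frameworks), rather than playing a structural role in the compactness itself.
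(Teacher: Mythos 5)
There is no in-paper proof to compare against: Theorem \ref{thm_Aubin} is imported verbatim from the literature (cited as \cite[Chapter 5]{an}) — it is the classical Aubin--Lions--Simon compactness lemma — and the paper simply uses it as a black box. Your sketch is the standard proof of that lemma and its architecture is sound: Ehrling's interpolation inequality reduces the problem to compactness in $L^{r_0}(a,b;X_2)$; time mollification converts the a.e.\ $X_0$-bounds into pointwise equiboundedness, which together with the H\"older equicontinuity coming from $u_n'\in L^{r_1}$ with $r_1>1$ feeds Arzel\`a--Ascoli; and the uniform mollification error $O(\delta^{1-1/r_1})$ plus a diagonal extraction closes the loop. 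Two points should be made explicit to make this airtight. First, Ehrling's lemma needs the injection $X_1\hookrightarrow X_2$ to be continuous (and the inclusions to be injective); this is implicit in the chain $X_0\subset X_1\subset X_2$ but is used. Second, $u_n^\delta$ is defined only on $(a+\delta,b-\delta)$, so your bound $\|u_n-u_n^\delta\|_{L^{r_0}(a,b;X_2)}\le C\delta^{1-1/r_1}$ needs the two boundary strips to be controlled; this follows from the uniform embedding $W^{1,r_1}(a,b;X_2)\hookrightarrow C([a,b];X_2)$ (again using $r_1>1$), which makes the strips contribute at most $C\delta^{1/r_0}$ uniformly in $n$. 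Your closing remark about reflexivity is accurate: the argument you give does not actually use it (Simon's version of the lemma dispenses with reflexivity altogether), and it enters only in the classical Aubin--Lions formulation through weak-compactness bookkeeping.
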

Hence, the next result holds.
\begin{theorem}\label{thm_compact_generale} Assume Hypothesis \ref{Hypoth_1_nondiv}. Then, the space $H^1(0, T;L^2_{\frac{1}{a}}(0,1)) \cap L^2(0, T; H^2_{\frac{1}{a}}(0,1))$
is compactly embedded in $ C([0, T]; L^2_{\frac{1}{a}}(0,1)) \cap L^2(Q).$
\end{theorem}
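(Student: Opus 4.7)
The plan is to combine Aubin's theorem (Theorem \ref{thm_Aubin}) with an Arzel\`a-Ascoli argument, based on the following key preliminary fact: $H^1_{\frac{1}{a}}(0,1) \hookrightarrow L^2_{\frac{1}{a}}(0,1)$ compactly.

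I would first prove this compact embedding. By Hypothesis \ref{Hypoth_1_nondiv}, the monotonicity of $x \mapsto x^\alpha/a(x)$ for some $\alpha \in (0,2)$ yields $a(x) \ge a(1)\, x^\alpha$ on $(0,1]$. For $u \in H^1_0(0,1)$, applying Cauchy-Schwarz to $u(x)=\int_0^x u_y(y)\,dy$ gives $|u(x)|^2 \le x\, \|u_x\|_{L^2(0,1)}^2$, whence
\[
\int_0^\delta \frac{u^2}{a}\,dx \le \frac{1}{a(1)}\int_0^\delta x^{1-\alpha}\,dx \;\|u_x\|_{L^2}^2 = \frac{\delta^{2-\alpha}}{(2-\alpha)\,a(1)} \|u_x\|_{L^2}^2,
\]
so $H^1_0$-bounded sets have uniformly absolutely continuous weighted $L^2_{\frac{1}{a}}$-mass at the origin. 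Given $\{u_n\}$ bounded in $H^1_0$, the Rellich-Kondrachov theorem extracts a subsequence with $u_n \to u$ in $L^2(0,1)$. Splitting $\int_0^1 (u_n-u)^2/a\,dx = \int_0^\delta + \int_\delta^1$: the first piece is bounded by $C\delta^{2-\alpha}$ uniformly in $n$ by the estimate above, while the second is bounded by $\|u_n-u\|_{L^2(\delta,1)}^2/a(\delta) \to 0$. Letting $n \to \infty$ and then $\delta \to 0$ gives $u_n \to u$ in $L^2_{\frac{1}{a}}(0,1)$. Because Proposition \ref{Prop_Hardy_nondiv} shows that $H^1_{\frac{1}{a}}$ and $H^1_0$ have equivalent norms, the asserted compact embedding follows.

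Next, I would obtain compactness in $L^2(0,T; L^2_{\frac{1}{a}})$ via Aubin. Any $\{u_n\}$ bounded in $H^1(0,T; L^2_{\frac{1}{a}}) \cap L^2(0,T; H^2_{\frac{1}{a}})$ is in particular bounded in $H^1(0,T; L^2_{\frac{1}{a}}) \cap L^2(0,T; H^1_{\frac{1}{a}})$, since $H^2_{\frac{1}{a}} \subset H^1_{\frac{1}{a}}$. Applying Theorem \ref{thm_Aubin} with $X_0 = H^1_{\frac{1}{a}}$, $X_1 = X_2 = L^2_{\frac{1}{a}}$ and $r_0 = r_1 = 2$, the compactness $X_0 \hookrightarrow X_1$ just established delivers a subsequence converging in $L^2(0,T; L^2_{\frac{1}{a}})$, which embeds continuously in $L^2(Q)$ because $a \in L^\infty(0,1)$.

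To upgrade this to compactness in $C([0,T]; L^2_{\frac{1}{a}})$, I would invoke equicontinuity: for $s,t \in [0,T]$,
\[
\|u_n(t) - u_n(s)\|_{L^2_{\frac{1}{a}}} \le \left|\int_s^t \|(u_n)_t(\tau)\|_{L^2_{\frac{1}{a}}}\,d\tau\right| \le |t-s|^{1/2}\,\|(u_n)_t\|_{L^2(0,T;L^2_{\frac{1}{a}})},
\]
so $\{u_n\}$ is uniformly equicontinuous and uniformly bounded in $C([0,T]; L^2_{\frac{1}{a}})$. Combined with the $L^2(0,T;L^2_{\frac{1}{a}})$-convergence from the previous step, a standard argument (if uniform convergence failed along some $t_n$, equicontinuity would force a nonvanishing contribution to the $L^2$-in-time norm on an interval of positive length around $t_n$, contradiction) promotes convergence to $C([0,T]; L^2_{\frac{1}{a}})$, closing the proof. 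The main obstacle is the weighted compact embedding $H^1_{\frac{1}{a}} \hookrightarrow L^2_{\frac{1}{a}}$, which requires extracting uniform integrability at the degeneracy point from Hypothesis \ref{Hypoth_1_nondiv}; afterwards Theorem \ref{thm_Aubin} and Arzel\`a-Ascoli finish the argument routinely.
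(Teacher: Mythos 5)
Your proof is correct, but it takes a genuinely different route from the paper's. The paper splits the target into its two factors and treats them separately: for the $L^2(Q)$ part it applies Aubin's theorem with the \emph{unweighted} triple $X_0=H^1_{\frac{1}{a}}(0,1)=H^1_0(0,1)$, $X_1=L^2(0,1)$, $X_2=H^{-1}(0,1)$, so the only spatial compactness it needs is the classical Rellich embedding $H^1_0\hookrightarrow\hookrightarrow L^2$; for the $C([0,T];L^2_{\frac{1}{a}})$ part it simply asserts that $H^1(0,T;L^2_{\frac{1}{a}}(0,1))$ embeds compactly into $C([0,T];L^2_{\frac{1}{a}}(0,1))$. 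You instead prove the weighted compact embedding $H^1_{\frac{1}{a}}(0,1)\hookrightarrow\hookrightarrow L^2_{\frac{1}{a}}(0,1)$ from scratch (the splitting at $x=\delta$ combined with $a(x)\ge a(1)x^\alpha$, $\alpha<2$, is exactly the right way to control the mass at the degeneracy point), feed it into Aubin with $X_1=X_2=L^2_{\frac{1}{a}}(0,1)$ to get strong convergence in $L^2(0,T;L^2_{\frac{1}{a}}(0,1))\hookrightarrow L^2(Q)$, and then upgrade to uniform-in-time convergence via the equicontinuity estimate $\|u_n(t)-u_n(s)\|_{L^2_{\frac{1}{a}}}\le |t-s|^{1/2}\|\partial_t u_n\|_{L^2(0,T;L^2_{\frac{1}{a}})}$. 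Your route costs one extra lemma (the weighted Rellich-type embedding) but buys a self-contained and arguably more robust treatment of the $C([0,T];L^2_{\frac{1}{a}})$ factor: a bound in $H^1(0,T;X)$ alone does not yield compactness in $C([0,T];X)$ for infinite-dimensional $X$ (constant-in-time orthonormal sequences are bounded there with no uniformly convergent subsequence), so some spatial compactness must enter at that stage, and your combination of equicontinuity with the already-established strong $L^2$-in-time convergence, in the spirit of Simon's lemma, supplies it explicitly. The one detail worth spelling out at the end is that the ``standard argument'' is cleanest when run on the differences $u_n-u_m$, which share the same modulus of continuity, so as to conclude that $\{u_n\}$ is Cauchy in $C([0,T];L^2_{\frac{1}{a}}(0,1))$; applying it directly to $u_n-u$ requires first noting that the limit $u$ inherits the same H\"older-in-time bound by weak lower semicontinuity.
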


\begin{proof}
In the following, by $\hookrightarrow$ and $\hookrightarrow\hookrightarrow$ we shall denote continuous and compact embedding respectively.

Since
$$H^1(0, T;L^2_{\frac{1}{a}}(0,1)) \cap L^2(0, T; H^2_{\frac{1}{a}}(0,1))\hookrightarrow H^1(0, T;L^2_{\frac{1}{a}}(0,1))$$ and $$H^1(0, T;L^2_{\frac{1}{a}}(0,1))\hookrightarrow\hookrightarrow C([0, T]; L^2_{\frac{1}{a}}(0,1), $$
we have
\begin{equation}\label{embed1}
H^1(0, T;L^2_{\frac{1}{a}}(0,1)) \cap L^2(0, T; H^2_{\frac{1}{a}}(0,1))\hookrightarrow
\hookrightarrow C([0, T]; L^2_{\frac{1}{a}}(0,1)).
\end{equation}
On the other hand, by observing that $H^2_{\frac{1}{a}}(0,1) \hookrightarrow H^1_{\frac{1}{a}}(0,1)$ and $L^2_{\frac{1}{a}}(0,1) \hookrightarrow L^2(0,1) \hookrightarrow H^{-1}(0,1)$, we obtain
$$H^1(0, T;L^2_{\frac{1}{a}}(0,1)) \cap L^2(0, T; H^2_{\frac{1}{a}}(0,1))\hookrightarrow H^1(0, T;H^{-1}(0,1)) \cap L^2(0, T; H^1_{\frac{1}{a}}(0,1)).$$
Next, taking $r_0= r_1=2$, $X_0 = H^1_{\frac{1}{a}}(0,1)=H^1_{0}(0,1)$ (by Proposition \ref{Prop_Hardy_nondiv}), $X_1 =
L^2(0,1)$, $X_2 = H^{-1}(0,1)$, $a=0$ and $b=T$, by Theorem \ref{thm_Aubin}, one has
\begin{equation}\label{embed3}
H^1(0, T;H^{-1}(0,1)) \cap L^2(0, T; H^1_{\frac{1}{a}}(0,1)) \hookrightarrow\hookrightarrow L^2(Q).
\end{equation}
Finally, from \eqref{embed1} and \eqref{embed3}, we conclude that
\begin{equation*}
H^1(0, T;L^2_{\frac{1}{a}}(0,1)) \cap L^2(0, T; H^2_{\frac{1}{a}}(0,1)) \hookrightarrow\hookrightarrow C([0, T]; L^2_{\frac{1}{a}}(0,1)) \cap L^2(Q).
\end{equation*}
\end{proof}


\begin{thebibliography}{99}
\bibitem{Hajjaj2013}
E. M. Ait Ben Hassi, F. Ammar Khodja, A. Hajjaj and L. Maniar, {\it Carleman estimates and null controllability of coupled degenerate systems}, Evol. Equ. Control Theory, 2 (2013), 441–459.
\bibitem{Alabau2006}
F. Alabau-Boussouira, P. Cannarsa, G. Fragnelli, {\it Carleman estimates for degenerate parabolic operators with application to null controllability},  J. Evol. Equ. 6 (2006), 161-204.

\bibitem{AHMS19} B. Allal, A. Hajjaj, L. Maniar, J. Salhi,
{\it Lipschitz stability for some coupled degenerate parabolic systems with locally distributed observations of one component},  Mathematical Control \& Related Fields, \textbf{10} (2020), 643--667.

\bibitem{Allal2020} 
B. Allal, G. Fragnelli,
{\it Null controllability of degenerate parabolic equation with memory}, Math. Methods Appl. Sci., in press. https://doi.org/10.1002/mma.7342.
\bibitem{AFS2020} 
B. Allal, G. Fragnelli, J. Salhi,
{\it Null controllability for a singular heat equation with
a memory term}, Electron. J. Qual. Theory Differ. Equ., \textbf{14} (2021), 1--24.


\bibitem{an} A. Ani\c{t}a,
{\it Analysis and Control of Age-Dependent Population Dynamics},
Mathematical Modelling: Theory and Applications, 11. Kluwer
Academic Publishers, Dordrecht, 2000.
 \bibitem{bfr} V. Barbu, A. Favini, S. Romanelli,
   {\it Degenerate evolution equations and regularity of their
   associated semigroups}, Funkcial. Ekvac \textit{39} (1996), 421-448.


\bibitem{Biccari2019} U. Biccari and V. Hern\'{a}ndez-Santamar\'{\i}a,
{\it Null controllability of linear and semilinear nonlocal heat equations with an additive integral kernel},
SIAM J. Control Optim. 57, 4 (2019), 2924-2938.

\bibitem{Calsina} A. Calsina, C. Perello, J. Saldana,  {\it Non-local reaction-diffusion equations modelling predator-prey coevolution}, Publ.
Mat. \textbf{38} (1994), 315-325.
\bibitem{Campiti1998} M. Campiti, G. Metafune and D. Pallara,
{\it Degenerate self-adjoint evolution equations on the unit interval}, Semigroup Forum, 57 (1998), 1-36.
\bibitem{CFR2008} P. Cannarsa, G. Fragnelli, and D. Rocchetti, {\it Controllability results for a class of onedimensional degenerate parabolic problems in nondivergence form}, J. Evol. Equ. 8 (2008),
no. 4, 583-616.
\bibitem{chipot2000} M. Chipot, {\it Elements of Nonlinear analysis}, Birkh\"{a}user Advanced Text, 2000.
\bibitem{Davies1995}
E. B. Davies, {\it Spectral theory and differential operators}, Cambridge Studies in Advanced
Mathematics \textbf{42}, Cambridge University Press, Cambridge, 1995.
\bibitem{Evans}
L. C. Evans,  {\it Partial Differential Equations}, second edition,  Graduate Studies in Mathematics 19, 2010.
\bibitem{Favini}
A. Favini an. Yagi,  {\it Degenerate Differential Equations in Banach Spaces}, Pure anpplied Mathematics: A Series of Monographs and Textbooks 215, M.Dekker, New York, 1998.
\bibitem{cara2004} E. Fern\'{a}ndez-Cara, S. Guerrero, O. Y. Imanuvilov, and J.-P. Puel,
{\it Local exact controllability of the Navier-Stokes system},
J. Math. Pures Appl., \textbf{83} (2004), 1501-1542.
\bibitem{cara2006} E. Fern\'{a}ndez-Cara, S. Guerrero,
{\it Global Carleman estimates for solutions of parabolic systems defined by transposition and some applications to controllability},
Applied Mathematics Research eXpress, Volume 2006, Article ID 75090, Pages 1-31.

\bibitem{cara} E. Fern\'{a}ndez-Cara and S. Guerrero,
{\it Global Carleman inequalities for parabolic systems and applications to null controllability},
SIAM J. Control Optim., \textbf{45} (2006), 1395--1446.
\bibitem{CLZ2016} E. Fern\'andez-Cara, Q. L\"{u}, and E. Zuazua, {\it Null controllability of linear heat and wave equations with nonlocal spatial terms}, SIAM J. Control Optim., \textbf{54} (2016), pp. 2009-2019.
\bibitem{FMpress} G. Fragnelli, D. Mugnai, Control of degenerate and singular parabolic equation, BCAM
SpringerBriefs, ISBN 978-3-030-69348-0.
\bibitem{yamamoto2020} G. Fragnelli, M. Yamamoto, {\it Carleman estimates and controllability for a degenerate structured population model}, Appl. Math. Optim, DOI: 10.1007/s00245-020-09669-0.

\bibitem{FI1996}
A. V. Fursikov and O. Y. Imanuvilov,
{\it Controllability of evolution equations},
Lect. Notes Ser. \textbf{34}, Seoul National University,
Seoul, 1996.
\bibitem{Furter} J. Furter, M. Grinfeld , {\it Local vs. non-local interactions in population dynamics}, J. Math. Biol. \textbf{27} (1989), 65-80.

\bibitem{gueye} M. Gueye,
{\it Insensitizing controls for the Navier-Stokes equations},
Ann. Inst. H. Poincar\'e Anal. Non Lin\'eaire, \textbf{30} (2013), 825-844.

\bibitem{Lissy2018} P. Lissy and E. Zuazua, {\it Internal controllability for parabolic systems involving analytic nonlocal terms}, Chin. Ann. Math. Ser. B, \textbf{39} (2018), pp. 281-296.
\bibitem{Lions71}
J.L. Lions,  {\it Optimal control of systems governed by partial differential equations}, Springer-Verlag, 1971.
\bibitem{Lorenzi11} A. Lorenzi, {\it Two severely ill-posed linear parabolic problems}, in AIP Conference Proceedings,
AIP Conf. Proc. 1329, AIP, Melville, NY, 2011, pp. 150-169.
\bibitem{Martinez06}
P. Martinez and J. Vancostenoble, {\it Carleman estimates for one-dimensional degenerate heat equations}, J. Evol. Equ. 6 (2006), no. 2, 325–362, DOI 10.1007/s00028-006-0214-6.
\bibitem{Micu} S. Micu and T. Takahashi, {\it Local controllability to stationary trajectories of a Burgers equation with nonlocal viscosity}, J. Differential Equations, \textbf{264} (2018), pp. 3664-3703.
\bibitem{simon}
J. Simon, {\it Compact sets in the space $L^p(0,T;B)$},
Ann. Mat. Pura Appl., \textbf{146} (1986), 65--96
\bibitem{TG2016}
Q. Tao and H. Gao,
{\it On the null controllability of heat equation with memory}, J. Math. Anal. Appl., \textbf{440} (2016) 1--13.
\bibitem{Zheng} Y. Zheng and Z. B. Fang, {\it Qualitative properties for a pseudo-parabolic equation with nonlocal reaction term}, Boundary
Value Problems, vol. 2019, no. 1, p. 134, 2019.
\end{thebibliography}
\end{document}